\numberwithin{equation}{section}
\newcommand{\N}{{\mathbb N}}
\newcommand{\Q}{{\mathbb Q}}
\newcommand{\R}{{\mathbb R}}
\newcommand{\T}{{\mathbb T}}
\newcommand{\eps}{\varepsilon}
\newcommand{\dom}[1]{{\rm dom}(#1)}
\renewcommand{\theta}{\vartheta}
\numberwithin{equation}{section}
\newtheorem{theorem}{Theorem}[section]
\newtheorem{proposition}[theorem]{Proposition}
\newtheorem{remark}[theorem]{Remark}
\newtheorem{corollary}[theorem]{Corollary}
\newtheorem{definition}[theorem]{Definition}
\theoremstyle{definition}
\title{Symmetry in variational principles and applications}
\author{Marco Squassina}
\address{Dipartimento di Informatica
\newline\indent
Universit\`a degli Studi di Verona
\newline\indent
C\'a Vignal 2, Strada Le Grazie 15, I-37134 Verona, Italy}
\email{marco.squassina@univr.it}
\thanks{Research supported by PRIN (2007): {\em Metodi Variazionali e Topologici
nello Studio di Fenomeni non Lineari}}
\begin{document}
	

\subjclass[2010]{35A15; 35B06; 58E05; 65K10}

\keywords{Variational principles, symmetry, Palais-Smale sequences, weak and strong slope.}

\begin{abstract}
We formulate symmetric versions of classical variational principles. Within the framework 
of non-smooth critical point theory, we detect Palais-Smale sequences with
additional second order and symmetry information. We discuss applications to  
PDEs, fixed point theory and geometric analysis.
\end{abstract}
\maketitle




\section{Introduction}
One of the most powerful contributions of the last decades in calculus of variations and nonlinear analysis is
surely given by Ekeland's variational principle for lower semi-continuous 
functionals on complete metric spaces \cite{ekeland1,ekeland2}, arisen
in the context of convex analysis.
We refer to \cite{aubin-ekeland,borwzhu,defig,ekeland2,ghoubook} where a multitude of applications in different 
fields of analysis is carefully discussed. In a recent note \cite{squass} the author 
has proved a version of the principle in Banach spaces which provides
{\em almost symmetric} almost critical points, provided that the functional satisfies
a rather mild symmetry condition. Roughly speaking, if $(X,\|\cdot\|)$ is a Banach space which is continuously embedded
into a space $(V,\|\cdot\|_V)$ with suitable properties and $f:X\to\R\cup\{+\infty\}$ is a lower 
semi-continuous bounded below functional which does not increase by polarization, then for all $\eps>0$
there is $u_\eps\in X$ with
$$
\|u_\eps-u^*_\eps\|_V<\eps,\qquad f(u_\eps)\leq \inf f+\eps^2,\qquad
f(\xi)\geq f(u_\eps)-\eps \|\xi-u_\eps\|\quad\forall \xi\in X,
$$
where the symmetrization $*$ is defined in an abstract framework, which 
reduces to the classical notions in concrete functional spaces, such as in $L^p(\Omega)$ and in $W^{1,p}_0(\Omega)$ spaces, 
being $\Omega$ either a ball in $\R^N$ or the whole $\R^N$.
Possessing almost symmetric points is very useful in applications not only to find symmetric cluster points, but also in order
to facilitate the strong convergence of the sequence $(u_\eps)$ via suitable compact embeddings enjoyed 
by spaces $X_*$ of symmetric functions of $X$ \cite{lionsym,strauss,willem}.
The aim of the present manuscript is that of giving a rather complete range of abstract 
results in this direction furnishing also applications to calculus of variations, fixed point theory and geometry of Banach spaces.
\vskip2pt
\noindent
The plan of the paper is as follows.
In Section~\ref{varprinciples}, we will state symmetric versions 
of Ekeland \cite{ekeland1}, Borwein-Preiss \cite{borweinpreiss} and
Deville-Godefroy-Zizler \cite{deville} principles, free or constrained, as well as versions for the Ekeland's principle with weights,
in the spirit of Zhong's result \cite{zhong} (see Theorems~\ref{mainthm}, \ref{ekelcor}, 
\ref{ekelcorII}, \ref{ekelcorIII}, \ref{ekelcor4}, \ref{ekelcorV}, \ref{mainthm-deville} and \ref{symzhong}).
Furthermore, in the framework of the
non-smooth critical point theory developed in \cite{dm}, we will detect suitable Palais-Smale sequences $(u_h)$ with
respect to the notion of weak slope whose elements $u_h$ become more and more symmetric, $u_h\sim u_h^*$, as $h$ gets large,
and satisfy a second order information, in terms of a quantity $w\mapsto Q_{u_h}(w)$,
introduced in \cite{bartschdeg}, that
plays the r\v ole of the quadratic form $w\mapsto f''(u_h)(w)^2$ for functions of class $C^2$
(see Theorem~\ref{maincor} as well as Corollary~\ref{corsym11}).
As pointed out by Lions in \cite{lionscmp}, this additional second order information can be 
very important to prove the strong convergence of $(u_h)\subset X$, 
in some physically meaningful situations. It would be interesting to obtain results
in the same spirit for mountain pass values in place of minimum values, as developed by Fang and Ghoussoub in \cite{fanghou} 
without symmetry information. In Section~\ref{remsymcoercps} a discussion upon the relationships between symmetry, coercivity
and Palais-Smale sequences is developed while in Section~\ref{minimaxclass} an application of the symmetric Ekeland principles
to get minimax type results is outlined. In Section \ref{applicationsprincip}, we discuss
some possible applications and implications of the abstract machinery formulated in Section~\ref{varprinciples}.
First, we find almost symmetric solutions, up to a perturbation, for two classes of nonlinear elliptic PDEs
associated with suitable energy functionals (see Theorems \ref{Eksymnonsm} and \ref{EksymnonsmOO}). 
Next, we get a symmetric version of
Caristi \cite{caristi} fixed point theorem and of a theorem due to Clarke \cite{clarke} 
(see Theorems~\ref{caristi} and \ref{clarkethm}) and we obtain some applications
in the geometry of Banach spaces, such as symmetric versions of Dane\u{s} Drop \cite{danes} and Flower Petal theorems \cite{penot}
(see Theorems \ref{drop} and \ref{petal}).

\section{Symmetric variational principles}
\label{varprinciples} 

\noindent
Let $X$, $V$ and $W$ be three real Banach spaces with $X\subseteq V\subseteq W$ 
and let $S\subseteq X$. 

\subsection{Abstract framework}
\label{polarizationsect}
Following \cite{vansch}, consider the following

\begin{definition}\label{abssym}
We consider two maps $*:S\to S$, $u\mapsto u^*$, 
the symmetrization map, and $h:S\times {\mathcal H}_*\to S$, $(u,H)\mapsto u^H$, the polarization map, ${\mathcal H}_*$ 
being a path-connected topological space. We assume that the following hold:
\begin{enumerate}
 \item $X$ is continuously embedded in $V$; $V$ is continuously embedded in $W$;
 \item $h$ is a continuous mapping;
\item for each $u\in S$ and $H\in {\mathcal H}_*$ it holds $(u^*)^H=(u^H)^*=u^*$ and $u^{HH}=u^H$;
\item there exists $(H_m)\subset {\mathcal H}_*$ such that, for $u\in S$, $u^{H_1\cdots H_m}$ converges
to $u^*$ in $V$;
\item for every $u,v\in S$ and $H\in {\mathcal H}_*$ it holds
$\|u^H-v^H\|_V\leq \|u-v\|_V$.
\end{enumerate}
Moreover, the mappings $h:S\times {\mathcal H}_*\to S$ and $*:S\to S$ can be extended to $h:X\times {\mathcal H}_*\to S$ and $*:X\to S$ by 
setting $u^H:=(\Theta(u))^H$ for every $u\in X$ and $H\in {\mathcal H}_*$ and
$u^*:=(\Theta(u))^*$ for every $u\in X$ respectively, where $\Theta:(X,\|\cdot\|_V)\to (S,\|\cdot\|_V)$ is
a Lipschitz function, of Lipschitz constant $C_\Theta>0$, such that $\Theta|_{S}={\rm Id}|_{S}$. 
\end{definition}

\noindent
The previous properties, in particular (4) and (5), and the definition of $\Theta$ easily yield:
\begin{equation}
	\label{contractivvv}
\forall u,v\in X,\,\forall H\in {\mathcal H}_*:\,\,\, \|u^H-v^H\|_V\leq C_\Theta\|u-v\|_V,\,\,\, \|u^*-v^*\|_V\leq C_\Theta\|u-v\|_V.
\end{equation}

\noindent
For the sake of completeness, we now recall some concrete notions.
\subsubsection{Concrete polarization}
\label{concresectpol}
A subset $H$ of $\R^N$ is called a polarizer if it is a closed affine half-space
of $\R^N$, namely the set of points $x$ which satisfy $\alpha\cdot x\leq \beta$
for some $\alpha\in \R^N$ and $\beta\in\R$ with $|\alpha|=1$. Given $x$ in $\R^N$
and a polarizer $H$ the reflection of $x$ with respect to the boundary of $H$ is
denoted by $x_H$. The polarization of a function $u:\R^N\to\R^+$ by a polarizer $H$
is the function $u^H:\R^N\to\R^+$ defined by
\begin{equation}
 \label{polarizationdef}
u^H(x)=
\begin{cases}
 \max\{u(x),u(x_H)\}, & \text{if $x\in H$} \\
 \min\{u(x),u(x_H)\}, & \text{if $x\in \R^N\setminus H$.} \\
\end{cases}
\end{equation}
The polarization $C^H\subset\R^N$ of a set $C\subset\R^N$ is 
defined as the unique set which satisfies $\chi_{C^H}=(\chi_C)^H$,
where $\chi$ denotes the characteristic function. 
The polarization $u^H$ of a positive function $u$ defined on $C\subset \R^N$
is the restriction to $C^H$ of the polarization of the extension $\tilde u:\R^N\to\R^+$ of 
$u$ by zero outside $C$. The polarization of a function which may change sign is defined
by $u^H:=|u|^H$, for any given polarizer $H$.

\subsubsection{Concrete symmetrization}
\label{concresectsym}
The Schwarz symmetrization of $C\subset \R^N$ is the unique open ball 
centered at the origin $C^*$ such that ${\mathcal L}^N(C^*)={\mathcal L}^N(C)$, being ${\mathcal L}^N$ the
Lebesgue measure on $\R^N$. If the measure of $C$ is zero, set $C^*=\emptyset$.
If the measure of $C$ is not finite, put $C^*=\R^N$. 
A measurable function $u$ is admissible for the Schwarz symmetrization if $u\geq 0$ and, for all $\eps>0$,
the measure of $\{u>\eps\}$ is finite. The Schwarz symmetrization
of an admissible $u:C\to\R^+$ is the unique $u^*:C^*\to\R^+$ such that,
for all $t\in\R$, it holds $\{u^*>t\}=\{u>t\}^*$. Considering the extension 
$\tilde u:\R^N\to\R^+$ of $u$ by zero outside $C$, it is $(\tilde u)^*|_{\R^N\setminus C^*}=0$ and
$u^*=(\tilde u)^*|_{C^*}$. Let ${\mathcal H}_*=\{H\in{\mathcal H}: 0\in H\}$ and let
$\Omega$ a ball in $\R^N$ or the whole space $\R^N$. Then $u=u^*$ if and only if $u=u^H$
for every $H\in {\mathcal H}_*$. Set either 
$X=W^{1,p}_0(\Omega)$, $S=W^{1,p}_0(\Omega,\R^+)$, $V=L^p\cap L^{p^*}(\Omega)$ with $h(u):=u^H$ and $*(u):=u^*$
or $X=S=W^{1,p}_0(\Omega)$, $V=L^p\cap L^{p^*}(\Omega)$ with $h(u):=|u|^H$ and $*(u):=|u|^*$.
In the first case $\Theta(u):=|u|$ defines a function from $(X,\|\cdot\|_V)$ to $(S,\|\cdot\|_V)$,
Lipschitz of constant $C_\Theta=1$, allowing to extend the definition of $h,*$ on $X=W^{1,p}_0(\Omega)$
by $h(u):=h(\Theta(u))$ and $*(u):=*(\Theta(u))$. In both cases properties (1)-(5) in 
Definition~\ref{abssym} are satisfied \cite{vansch}. 
\vskip5pt
\noindent
We now recall \cite[Corollary 3.1]{vansch} a 
useful result on the approximation of symmetrizations. 
The subset $S$ of $X$ in Definition~\ref{abssym} is considered
as a metric space with the metric $d$ induced by $\|\cdot\|$ on $X$. 
We assume that conditions (1)-(5) of Definition~\ref{abssym} are satisfied.

\begin{proposition}
	\label{mapJvS}
For all $\rho>0$ there exists a continuous mapping $\T_\rho:S\to S$ such that $\T_\rho u$ 
is built via iterated polarizations and $\|\T_\rho u-u^*\|_V<\rho$, for all $u\in S$.
\end{proposition}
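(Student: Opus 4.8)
The plan is to extract the mapping $\T_\rho$ directly from a uniform, quantitative version of property (4) in Definition~\ref{abssym}, proved via a compactness/continuity argument on the sequence of iterated polarizations. First I would recall from \cite[Corollary 3.1]{vansch} the key point: the sequence $(H_m)\subset\mathcal H_*$ furnished by condition (4) can be chosen so that the convergence $u^{H_1\cdots H_m}\to u^*$ in $V$ is, after the continuity estimate \eqref{contractivvv}, \emph{locally uniform} in $u$. Concretely, for a fixed $u_0\in S$ and $\rho>0$, condition (4) gives $m=m(u_0)$ with $\|u_0^{H_1\cdots H_m}-u_0^*\|_V<\rho/3$; since $u\mapsto u^{H_1\cdots H_m}$ is continuous (iterated composition of the continuous map $h$, by (2)) and $u\mapsto u^*$ is $\|\cdot\|_V$-continuous on $S$ by \eqref{contractivvv}, the function $u\mapsto\|u^{H_1\cdots H_m}-u^*\|_V$ is continuous, hence $<\rho$ on a whole $\|\cdot\|$-neighborhood $U_{u_0}$ of $u_0$ in $S$.

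Next I would globalize. The naive obstacle is that different points of $S$ may require different lengths $m$ of the polarization chain, and there is no reason the chains agree on overlaps; one cannot simply glue the local maps $u\mapsto u^{H_1\cdots H_{m(u_0)}}$ because the result would not be continuous across patch boundaries. The standard device to overcome this is a locally finite partition of unity $\{\varphi_i\}_{i\in I}$ subordinate to the open cover $\{U_{u_0}\}$ of the metric space $S$ (paracompactness of metric spaces), together with the fact that $S$, as a convex-like subset built from polarizations, carries enough structure to form convex combinations --- or, more robustly, one works in $V$ and then composes back: define $\T_\rho u := \Theta\big(\sum_i \varphi_i(u)\, u^{H^{(i)}_1\cdots H^{(i)}_{m_i}}\big)$ if $S$ is not itself convex, using the Lipschitz retraction $\Theta$ from Definition~\ref{abssym}. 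Each term in the sum lies within $\rho$ of $u^*$ in $\|\cdot\|_V$, the sum is a (locally finite, hence continuous) convex combination of points each $\rho$-close to the single point $u^*$, so the convex combination is $\rho$-close to $u^*$ as well; applying $\Theta$ (which fixes $S$ and is $\|\cdot\|_V$-Lipschitz) preserves both continuity and, up to absorbing $C_\Theta$ by shrinking $\rho$ at the outset, the estimate $\|\T_\rho u-u^*\|_V<\rho$. Here one uses that $u^*=(u^*)$ is already symmetric and $\Theta$ fixes it.

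I would then check that $\T_\rho u$ is ``built via iterated polarizations'' in the intended sense: each building block $u^{H^{(i)}_1\cdots H^{(i)}_{m_i}}$ is literally a finite iterated polarization of $u$, and $\T_\rho$ is assembled from these by a continuous selection/averaging procedure, which is exactly the content claimed in \cite[Corollary 3.1]{vansch}; one should cite that corollary for the precise statement if a reader wants the averaging avoided, since in the concrete $W^{1,p}_0$ and $L^p$ settings the target set $S$ of nonnegative functions is convex and closed under the relevant operations, so no $\Theta$ is needed and $\T_\rho u$ is genuinely an iterated polarization depending continuously on $u$. Finally, continuity of $\T_\rho$ on all of $S$ follows from local finiteness of $\{\varphi_i\}$ and continuity of each $u\mapsto u^{H^{(i)}_1\cdots H^{(i)}_{m_i}}$ and of $\Theta$.

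The main obstacle, as indicated, is the globalization step: passing from the pointwise/local statement (for each $u$ there is \emph{some} finite polarization chain that works) to a single continuous map $\T_\rho$ valid on all of $S$ simultaneously. This is precisely where paracompactness, the Lipschitz stability estimate \eqref{contractivvv}, and the retraction $\Theta$ enter; everything else is routine. Since the statement is quoted from \cite{vansch}, in the paper I would simply invoke \cite[Corollary 3.1]{vansch} and sketch the above as the idea of proof rather than reproving it in full.
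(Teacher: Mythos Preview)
The paper gives no proof of this proposition; it simply quotes it as \cite[Corollary 3.1]{vansch}. Your closing recommendation to invoke that reference therefore matches the paper exactly.

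Your sketch, however, has a genuine gap. The map you build, $\T_\rho u=\Theta\big(\sum_i\varphi_i(u)\,u^{H_1^{(i)}\cdots H_{m_i}^{(i)}}\big)$, is a (retracted) convex combination of iterated polarizations, \emph{not} itself an iterated polarization of $u$ --- and this distinction is precisely what the applications require. Every subsequent use of Proposition~\ref{mapJvS} in the paper (starting with the proof of Theorem~\ref{mainthm}) exploits that assumption~\eqref{assumptionpol-00}, namely $f(u^H)\le f(u)$, iterates to give $f(\T_\rho u)\le f(u)$; this deduction fails for convex combinations. Your remark that in the concrete $L^p$/$W^{1,p}_0$ settings ``$\T_\rho u$ is genuinely an iterated polarization'' is not supported by your construction: convexity of $S$ lets the average stay in $S$ without invoking $\Theta$, but it does not turn a convex combination into an iterated polarization.

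Two ingredients you overlook repair this. First, property (4) of Definition~\ref{abssym} furnishes a \emph{single fixed} sequence $(H_m)$ valid for every $u\in S$, so there is no patching of different chains; only the truncation index varies with $u$. Second, from (3) and (5) one has $(u^*)^H=u^*$ and hence $\|u^{H_1\cdots H_{m+1}}-u^*\|_V\le\|u^{H_1\cdots H_m}-u^*\|_V$: the distance to $u^*$ is monotone along the fixed chain. Using the path-connectedness of $\mathcal H_*$ together with the idempotence $u^{HH}=u^H$, one builds a continuous homotopy $(t,u)\mapsto u_t$ with $u_m=u^{H_1\cdots H_m}$ at integer times, each $u_t$ a genuine iterated polarization, and $\|u_t-u^*\|_V\le\|u_{\lfloor t\rfloor}-u^*\|_V$. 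One then selects a continuous \emph{time} $\tau:S\to[0,\infty)$ (via a partition of unity on the parameter space, not on the outputs) so that $\T_\rho u:=u_{\tau(u)}$ satisfies $\|\T_\rho u-u^*\|_V<\rho$. This is, in outline, Van Schaftingen's argument.
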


\begin{remark}\rm
	\label{restriction}
If $S$ is the set involved in Definition~\ref{abssym}, assume that
$$
S'\subseteq S, \,\,\quad h(S'\times {\mathcal H}_*)\subseteq S', \,\,\quad *(S')\subseteq S'.
$$ 
Then $(S',X,V,h,*)$ satisfies conditions (1)-(5) of Definition~\ref{abssym} and Proposition~\ref{mapJvS}
holds for $S'$ in place of $S$. If $u\in X$, then one still defines $u^H:=(\Theta(u))^H$ 
and $u^*:=(\Theta(u))^*$ for all $u\in X$. Notice that $\Theta(u)=u$ for all $u\in S'$, since $S'\subseteq S$
and $\Theta|_{S}={\rm Id}|_{S}$.
\end{remark}

\subsection{Classical principles}

In the following, we recall a particular form, suitable for our purposes, 
of Borwein-Preiss's smooth variational principle \cite{borweinpreiss} for reflexive
Banach spaces endowed with a Kadek renorm (cf.~\cite[Theorems 2.6 and 5.2, and formula 5.4]{borweinpreiss}).  
We say that $X$ is endowed with a Kadec renorm $\|\cdot\|$, if the weak and 
norm topologies agree on the unit sphere of $X$. Such a norm indeed exists if $X$ is 
reflexive \cite{kadek}.

\begin{theorem}[Borwein-Preiss' principle]
	\label{BPprinc}
Assume that	$X$ is a reflexive Banach space, endowed with any Kadec norm $\|\cdot\|$.
Let $f:X\to\R\cup\{+\infty\}$ be a proper bounded below lower 
semi-continuous functional. Let $u\in X$, 
$\rho>0$, $\sigma>0$ and $p\geq 1$ be such that
$$
f(u)<\inf f+\sigma\rho^p.
$$
Then there exist $v\in X$ and $\eta\in X$ such that
\begin{enumerate}
\item[(a)] $\|v-u\|<\rho$;
\item[(b)] $\|\eta-u\|\leq\rho $;
\item[(c)]   $f(v)<\inf f+\sigma\rho^p$;   
\item[(d)]   $f(w)\geq f(v)+\sigma (\|v-\eta\|^p-\|w-\eta\|^p),$\quad\text{for all $w\in X$.}
\end{enumerate}
\end{theorem}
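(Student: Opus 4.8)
The plan is to derive this form of the Borwein--Preiss principle from the standard statement found in \cite{borweinpreiss}. Recall that the usual formulation produces, for $\eps > 0$ and $\lambda > 0$ with $f(u) < \inf f + \eps$, a point $v$ and a sequence of points $(\xi_i)$ with weights $(\mu_i)$, $\mu_i \geq 0$, $\sum_i \mu_i = 1$, such that $\|v - \xi_i\| \leq \lambda$ for all $i$, $\|v - u\| \leq \lambda$, $f(v) \leq f(u)$, and $f(w) + \frac{\eps}{\lambda^p}\sum_i \mu_i \|w - \xi_i\|^p \geq f(v) + \frac{\eps}{\lambda^p}\sum_i \mu_i \|v - \xi_i\|^p$ for all $w \in X$. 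The reflexivity of $X$ together with the Kadec renorm is exactly what is invoked in \cite[Theorems 2.6 and 5.2]{borweinpreiss} to guarantee that the perturbing sequence $(\xi_i)$ can be taken to converge to a single point, which is the content we need to collapse the convex combination $\sum_i \mu_i \|\cdot - \xi_i\|^p$ into a single term $\|\cdot - \eta\|^p$.

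First I would fix the parameters: set $\eps := \sigma \rho^p$ and $\lambda := \rho$, so the hypothesis $f(u) < \inf f + \sigma \rho^p$ becomes $f(u) < \inf f + \eps$. Applying the reflexive/Kadec version of Borwein--Preiss with these choices yields $v \in X$ and a convergent sequence $\xi_i \to \eta$ with $\|v - \eta\| \leq \lambda = \rho$ (so $\eta$ satisfies (b)), $\|v - u\| < \rho$ after a harmless shrinking of $\lambda$ (giving (a)), and $f(v) < \inf f + \sigma\rho^p$ directly from $f(v) \leq f(u)$ (giving (c)). For (d), I would pass to the limit in the variational inequality: since $\xi_i \to \eta$ in norm and $t \mapsto \|w - t\|^p$ and $t \mapsto \|v - t\|^p$ are continuous, and the weights sum to one, both sides of the perturbed inequality converge to the corresponding expressions with a single point $\eta$, yielding $f(w) + \sigma(\|w - \eta\|^p - \|v - \eta\|^p) \geq f(v)$, which rearranges to (d). The step where reflexivity and the Kadec property are genuinely used is precisely in securing the norm-convergence $\xi_i \to \eta$; without it one only gets weak-$*$ type compactness and the convex combination does not collapse.

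The main obstacle, then, is not any deep new argument but rather a careful bookkeeping of how the perturbation functional in the original theorem, a countable convex combination $\sum_i \mu_i \|\cdot - \xi_i\|^p$, reduces to the single power $\|\cdot - \eta\|^p$. One must check that the convergence $\xi_i \to \eta$ is strong enough to interchange limit and the (infinite) sum; since $0 \le \mu_i$, $\sum_i \mu_i = 1$, and the functions $\|w - \xi_i\|^p$ are uniformly bounded on the bounded set $\{\xi : \|v - \xi\| \le \lambda\}$ for each fixed $w$ on bounded sets (and the inequality only needs to be tested against arbitrary $w$, with both sides finite when $f(w) < +\infty$), dominated convergence applies termwise and the exchange is legitimate. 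A minor point to handle with care is the strict versus non-strict inequalities in (a) and (c): one applies the principle with a slightly smaller radius $\rho' < \rho$ chosen so that the strict inequality $f(u) < \inf f + \sigma(\rho')^p$ still holds by continuity of $t \mapsto \sigma t^p$, which then upgrades $\|v - u\| \le \rho'< \rho$ and keeps $f(v) < \inf f + \sigma(\rho')^p < \inf f + \sigma\rho^p$, while $\|\eta - u\| \le \|\eta - v\| + \|v - u\| \le \rho' + \rho'$ — so in fact one should instead only shrink in the estimate for $v$ and keep $\eta$ within the original $\rho$-ball, which the original statement already guarantees. The proof is therefore short, essentially a specialization and limiting passage, with all the analytic weight carried by the cited reflexive-space version of the Borwein--Preiss theorem.
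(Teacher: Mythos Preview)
The paper does not prove this theorem at all: it is simply recalled from \cite{borweinpreiss} (specifically their Theorems~2.6 and~5.2 and formula~(5.4)), so there is no in-paper argument to compare against. What you have written is an attempted derivation from the general convex-combination form of the Borwein--Preiss principle, and that derivation has a genuine gap.

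The problematic step is your ``limiting passage'' to obtain (d). The standard Borwein--Preiss inequality is a \emph{single} inequality
\[
f(w) + \sigma \sum_{i} \mu_i \|w-\xi_i\|^p \;\ge\; f(v) + \sigma \sum_{i} \mu_i \|v-\xi_i\|^p \qquad\text{for all } w\in X,
\]
not a sequence of inequalities indexed by $i$. Knowing that $\xi_i\to\eta$ strongly does \emph{not} let you replace the weighted average $\sum_i \mu_i \|w-\xi_i\|^p$ by $\|w-\eta\|^p$: the left-hand side is a fixed number (a genuine convex combination of distinct values when the $\xi_i$ are not all equal), and there is no limit over which to invoke dominated convergence. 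For a concrete obstruction, take $\mu_0=\mu_1=\tfrac12$, all other $\mu_i=0$, and $\xi_0\neq\xi_1$; the sum $\tfrac12\|w-\xi_0\|^p+\tfrac12\|w-\xi_1\|^p$ is not of the form $\|w-\eta\|^p$ for any $\eta$, regardless of where later $\xi_i$ cluster.

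The way \cite{borweinpreiss} actually obtains the single-center conclusion in the reflexive/Kadec setting is not by collapsing the convex combination after the fact. Rather, one returns to the inductive construction itself: the iterates $v_n$ are shown to converge strongly (this is where reflexivity and the Kadec property enter), and the construction is arranged so that the limit $v$ minimizes $f$ plus a perturbation of the form $\sigma\|\cdot-\eta\|^p$ with a single center $\eta$ produced along the way. If you want to supply a proof, you should follow that route rather than attempt a post-hoc limit in the variational inequality. Your handling of (a)--(c), including the shrinking-of-$\rho$ trick for the strict inequalities, is essentially fine.
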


\noindent
The following is a symmetric version of Borwein-Preiss's smooth 
variational principle. 

\begin{theorem}[{Symmetric Borwein-Preiss' principle}]
	\label{mainthm}
Assume that	$X$ is reflexive Banach space, endowed with any Kadec norm $\|\cdot\|$.
Let $f:X\to\R\cup\{+\infty\}$ be a proper bounded below lower 
semi-continuous functional such that
\begin{equation}
	\label{assumptionpol-00}
\text{$f(u^H)\leq f(u),$\,\,\quad for all $u\in S$ and $H\in {\mathcal H}_*$}.
\end{equation}
Let $u\in S$, $\rho>0$, $\sigma>0$ and $p\geq 1$ be such that
\begin{equation}
	\label{startpoint}
f(u)<\inf f+\sigma\rho^p.
\end{equation}
Then there exist $v\in X$ and $\eta\in X$ such that
\begin{enumerate}
\item[(a)] $\|v-v^*\|_V< (K(C_\Theta+1)+1)\rho$; 
\item[(b)] $\|v-u\|< \rho+\|\T_\rho u-u\|$;
\item[(c)] $\|\eta-u\|\leq\rho +\|\T_\rho u-u\|$;
\item[(d)]   $f(v)<\inf f+\sigma\rho^p$;   
\item[(e)]   $f(w)\geq f(v)+\sigma (\|v-\eta\|^p-\|w-\eta\|^p),$\quad\text{for all $w\in X$.}
\end{enumerate}
Here $K>0$ denotes the continuity constant for the injection $X\hookrightarrow V$.
\end{theorem}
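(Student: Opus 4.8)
The plan is to deduce the statement from the classical Borwein--Preiss principle (Theorem~\ref{BPprinc}), after first replacing the base point $u$ by a nearly symmetric point obtained from $u$ through a finite chain of polarizations, which by \eqref{assumptionpol-00} cannot increase $f$.

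First I would fix $\rho>0$ and invoke Proposition~\ref{mapJvS} to obtain the continuous map $\T_\rho:S\to S$, setting $\tilde u:=\T_\rho u$. By construction $\tilde u=u^{H_1\cdots H_k}$ for suitable $H_1,\dots,H_k\in{\mathcal H}_*$, and $\|\tilde u-u^*\|_V<\rho$. Since $u\in S$ and $h(S\times{\mathcal H}_*)\subseteq S$, every intermediate iterate $u^{H_1\cdots H_j}$ lies in $S$, so \eqref{assumptionpol-00} may be applied along the chain and gives
$$
f(\tilde u)=f\big(u^{H_1\cdots H_k}\big)\leq f\big(u^{H_1\cdots H_{k-1}}\big)\leq\cdots\leq f\big(u^{H_1}\big)\leq f(u);
$$
together with \eqref{startpoint} this yields $f(\tilde u)<\inf f+\sigma\rho^p$. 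Moreover, iterating property (3) of Definition~\ref{abssym} along the same chain gives $(\tilde u)^*=(u^{H_1\cdots H_k})^*=u^*$.

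Next I would apply Theorem~\ref{BPprinc} to $f$ with base point $\tilde u$ in place of $u$ (legitimate by the previous step), obtaining $v,\eta\in X$ with $\|v-\tilde u\|<\rho$, $\|\eta-\tilde u\|\leq\rho$, $f(v)<\inf f+\sigma\rho^p$, and $f(w)\geq f(v)+\sigma(\|v-\eta\|^p-\|w-\eta\|^p)$ for all $w\in X$. The last two relations are exactly (d) and (e); and (b), (c) follow from the triangle inequality together with $\|\tilde u-u\|=\|\T_\rho u-u\|$, since $\|v-u\|\leq\|v-\tilde u\|+\|\tilde u-u\|<\rho+\|\T_\rho u-u\|$ and similarly for $\eta$. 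For the almost-symmetry estimate (a) I would split, in $V$,
$$
\|v-v^*\|_V\leq\|v-\tilde u\|_V+\|\tilde u-\tilde u^*\|_V+\|\tilde u^*-v^*\|_V,
$$
and bound the three terms as follows: $\|v-\tilde u\|_V\leq K\|v-\tilde u\|<K\rho$ by continuity of $X\hookrightarrow V$; $\|\tilde u-\tilde u^*\|_V=\|\T_\rho u-u^*\|_V<\rho$ using $\tilde u^*=u^*$ and Proposition~\ref{mapJvS}; and $\|\tilde u^*-v^*\|_V\leq C_\Theta\|\tilde u-v\|_V\leq C_\Theta K\|\tilde u-v\|<C_\Theta K\rho$ by the contractivity \eqref{contractivvv} and again $X\hookrightarrow V$. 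Summing, $\|v-v^*\|_V<(K+1+C_\Theta K)\rho=(K(C_\Theta+1)+1)\rho$, which is (a).

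The part requiring care is the bookkeeping in the first step: one must use that $\T_\rho u$ is a \emph{finite} composition of polarizations, that the whole chain remains inside $S$ so that the mild symmetry hypothesis \eqref{assumptionpol-00} is applicable at each step, and that polarization leaves the symmetrization fixed, so that the quantity $\|\T_\rho u-u^*\|_V$ furnished by Proposition~\ref{mapJvS} is precisely the $\|\tilde u-\tilde u^*\|_V$ entering the bound for (a). Everything else amounts to triangle inequalities and to tracking the embedding and contractivity constants $K$ and $C_\Theta$.
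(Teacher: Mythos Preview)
Your proof is correct and follows essentially the same approach as the paper: replace $u$ by $\tilde u=\T_\rho u$, use that $\tilde u$ is a finite chain of polarizations together with \eqref{assumptionpol-00} to keep the energy bound, apply the classical Borwein--Preiss principle at $\tilde u$, and then recover (a)--(c) by triangle inequalities, the embedding constant $K$, the contractivity \eqref{contractivvv}, and the identity $\tilde u^*=u^*$. Your three-term splitting for (a) is just a slightly more explicit rewriting of the paper's combined estimate $\|v-v^*\|_V\leq K(C_\Theta+1)\|v-\tilde u\|+\|\tilde u-u^*\|_V$.
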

\begin{proof}
	Let $u\in S$, $\rho>0$, $\sigma>0$ and $p\geq 1$ be such that $f(u)<\inf f+\sigma\rho^p$.
	If $\T_\rho:S\to S$ is the mapping of Proposition~\ref{mapJvS},
	we set $\tilde u:=\T_\rho u\in S$. Then, by construction we have $\|\tilde u-u^*\|_V<\rho$
	and, in light of~\eqref{assumptionpol-00} and the property that $\tilde u$ is built 
	from $u$ through iterated polarizations, we obtain
	$f(\tilde u)<\inf f+\sigma\rho^p$. By Theorem~\ref{BPprinc}, there exist
	elements $v\in X$ and $\eta\in X$ with $\|\eta-\tilde u\|\leq \rho$, such that
	$f(v)<\inf f+\sigma\rho^p$, $\|v-\tilde u\|<\rho$ and
	$$
	f(w)\geq f(v)+\sigma (\|v-\eta\|^p-\|w-\eta\|^p),\quad\text{for all $w\in X$.}
	$$
	Hence (d) and (e) hold true. Taking into account the second inequality in \eqref{contractivvv}, if $K>0$ denotes the 
	continuity constant of the injection $X\hookrightarrow V$, we obtain
	\begin{equation}
		\label{diffsymmcontro}
	\|v-v^*\|_V \leq K(C_\Theta+1)\|v-\tilde u\|+\|\tilde u-u^*\|_V < (K(C_\Theta+1)+1)\rho,
	\end{equation}
	where we used the fact that $u^*=\tilde u^*$, in light of (3) of the
	abstract framework and, again, by the way $\tilde u$ is built from $u$. 
	Then, (a) holds true. Also, (b) follows from
	\begin{equation}
	\|v-u\|\leq \|v-\tilde u\|+ \|\tilde u-u\|< \rho+\|\T_\rho u-u\|.
	\end{equation}
	Finally, (c) holds by virtue of $\|\eta-u\|\leq \|\eta-\tilde u\|+\|\tilde u-u\|\leq \rho +\|\T_\rho u-u\|$. 
\end{proof}

\begin{remark}\rm
If $u\in S$ in \eqref{startpoint} is such that $u^H=u$ for all $H\in {\mathcal H}_*$
(which is the case, for instance, if $u^*=u$ and $*$ denotes the usual Schwarz symmetrization
in the space of nonnegative vanishing measurable real functions), then by construction 
$\T_\rho u=u$ for every $\rho>0$ and conclusions 
(b)-(c) of Theorem~\ref{mainthm} improve into 
\begin{equation}
	\label{improvedvv}
\text{$\|v-u\|< \rho$\quad and\quad $\|\eta-u\|\leq \rho$}.	
\end{equation}
Hence, starting with a minimization sequence 
made of symmetric functions yields a new minimization sequence 
satisfying (a)-(e) and full smallness controls (b)-(c) 
of Theorem~\ref{mainthm}. In many concrete cases (although there are some exceptions, 
as pointed out in \cite{vansch}), if a functional does not
increase under polarization, namely condition~\eqref{assumptionpol-00} holds, then it is also 
non-increasing under symmetrization, namely
\begin{equation*}
\text{$f(u^*)\leq f(u),$\,\,\quad for all $u\in S$}.
\end{equation*}
In these cases, starting from an arbitrary minimization sequence $(u_h)\subset S$, first one can
consider the new symmetric minimization sequence $(u_h^*)\subset S$, which already admits a behavior
nicer than that of $(u_h)$, and then apply the variational principle to it, finding a further 
minimization sequence $(v_h)\subset X$ with even nicer additional properties.
\end{remark}

\noindent
In the abstract framework of Definition~\ref{abssym}, using Ekeland's principle in 
complete metric spaces, we can derive the following result. 

\begin{theorem}[{Symmetric Ekeland's principle, I}] 
	\label{ekelcor}
	Let $S\subset X$ be as in Definition \ref{abssym} and let $S'$ be a closed subset of $S$
	satisfying the properties stated in Remark~\ref{restriction}.	
	Assume that $f:S'\to\R\cup\{+\infty\}$ is a proper and lower 
	semi-continuous functional bounded from below such that \eqref{assumptionpol-00} holds (on $S'$).
	Then for all $\rho>0$ and $\sigma>0$ there exists $v\in S'$ such that
	\begin{enumerate}
	\item[(a)] $\|v-v^*\|_V < (2K+1)\rho$; 
	\item[(b)]   $f(w)\geq f(v)-\sigma \|w-v\|,$\quad\text{for all $w\in S'$.}
	\end{enumerate}
	In addition, one can assume that $f(v)\leq f(u)$ and $\|v-u\|\leq \rho+\|\T_\rho u-u\|$, where $u\in S'$
	is some element which satisfies $f(u)\leq \inf f+\sigma\rho$.
\end{theorem}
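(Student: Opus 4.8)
The plan is to mimic exactly the proof of the symmetric Borwein–Preiss principle (Theorem~\ref{mainthm}), replacing the smooth principle by classical Ekeland's principle on the complete metric space $(S',d)$, where $d$ is induced by $\|\cdot\|$. First I would fix $\rho>0$ and $\sigma>0$ and pick a nearly-minimizing $u\in S'$ with $f(u)\le\inf f+\sigma\rho$; this is possible since $f$ is proper and bounded below. Since $S'$ satisfies the conditions of Remark~\ref{restriction}, Proposition~\ref{mapJvS} applies to $S'$, so there is a continuous map $\T_\rho:S'\to S'$, built via iterated polarizations, with $\|\T_\rho u-u^*\|_V<\rho$. Set $\tilde u:=\T_\rho u\in S'$. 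Because $\tilde u$ is obtained from $u$ by finitely many polarizations and $f$ does not increase under polarization on $S'$ by~\eqref{assumptionpol-00}, we get $f(\tilde u)\le f(u)\le\inf f+\sigma\rho$. Also, as in the proof of Theorem~\ref{mainthm}, the identity $u^*=\tilde u^*$ holds, which will be used for estimate (a).

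Next I would apply Ekeland's variational principle on the complete metric space $(S',d)$ to the proper, lower semi-continuous, bounded-below functional $f$, starting from the point $\tilde u$ which satisfies $f(\tilde u)\le\inf_{S'} f+\sigma\rho$. The standard Ekeland conclusion yields $v\in S'$ with $f(v)\le f(\tilde u)$, $\|v-\tilde u\|\le\rho$, and $f(w)\ge f(v)-\sigma\|w-v\|$ for all $w\in S'$. This immediately gives (b) and the inequality $f(v)\le f(\tilde u)\le f(u)$.

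For (a), I would combine the contraction estimate~\eqref{contractivvv} for $*$ (constant $C_\Theta$), the continuity constant $K$ of the embedding $X\hookrightarrow V$, and the triangle inequality exactly as in~\eqref{diffsymmcontro}:
\begin{equation*}
\|v-v^*\|_V\le K\|v-\tilde u\|+\|\tilde u-\tilde u^*\|_V+\|\tilde u^*-v^*\|_V\le K\|v-\tilde u\|+\|\tilde u-u^*\|_V+KC_\Theta\|v-\tilde u\|,
\end{equation*}
and since $\|v-\tilde u\|\le\rho$ and $\|\tilde u-u^*\|_V<\rho$, this is bounded by $(K(C_\Theta+1)+1)\rho$. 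In the concrete Schwarz-symmetrization setting one has $C_\Theta=1$, yielding the stated bound $(2K+1)\rho$; I would either state the result with $C_\Theta=1$ in force or write $(K(C_\Theta+1)+1)\rho$ for consistency with Theorem~\ref{mainthm}. Finally, the additional claim $\|v-u\|\le\rho+\|\T_\rho u-u\|$ follows from $\|v-u\|\le\|v-\tilde u\|+\|\tilde u-u\|\le\rho+\|\T_\rho u-u\|$.

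The only genuinely delicate point is the applicability of Ekeland's principle: it requires $(S',d)$ to be a \emph{complete} metric space, which is why the hypothesis that $S'$ is closed in $S$ (hence, being $S\subseteq X$ and $X$ Banach, one needs $S'$ closed in $X$, or one restricts to settings where $S$ itself is complete) is imposed. Everything else is a routine repetition of the argument for Theorem~\ref{mainthm} with the smooth principle swapped for the metric one; no new ideas are needed, and the polarization-invariance of $f$ together with Proposition~\ref{mapJvS} does all the symmetry work.
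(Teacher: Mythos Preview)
Your proposal is correct and follows essentially the same route as the paper's proof: pick $u\in S'$ with $f(u)\le\inf f+\sigma\rho$, replace $u$ by $\tilde u=\T_\rho u$ using Proposition~\ref{mapJvS} on $S'$, apply classical Ekeland on the complete metric space $(S',d)$ to obtain $v$, and then estimate $\|v-v^*\|_V$ and $\|v-u\|$ by the triangle inequality.

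One small clarification on the constant: the reason the bound is $(2K+1)\rho$ here rather than $(K(C_\Theta+1)+1)\rho$ is \emph{not} that $C_\Theta=1$ in a concrete setting, but that in this theorem $v,\tilde u\in S'\subset S$, so $\Theta(v)=v$ and $\Theta(\tilde u)=\tilde u$; hence the contractivity of $*$ on $S$ (which follows from (4)--(5) of Definition~\ref{abssym} with constant $1$, not $C_\Theta$) applies directly. In Theorem~\ref{mainthm} one only knows $v\in X$, whence the extra factor $C_\Theta$. This is purely cosmetic and does not affect the validity of your argument.
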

\begin{proof}
As $S'$ is a closed subset of the Banach space $X$, then $(S',d)$ is a complete
metric space, where $d(u,v)=\|u-v\|$. Given $\rho>0$ and $\sigma>0$, let 
$u\in S'$ with $f(u)\leq \inf f+\sigma\rho$.
If $\T_\rho:S'\to S'$ is the map of Proposition \ref{mapJvS} (cf.\ Remark~\ref{restriction}), let $\tilde u=\T_\rho u\in S'$.
Then $\|\tilde u-u^*\|_V<\rho$ and, taking into account \eqref{assumptionpol-00}, 
$f(\tilde u)\leq \inf f+\sigma\rho$. By applying Ekeland's variational principle 
on the complete metric space $S'$ \cite[Theorem 1.1]{ekeland1}, 
we find $v\in S'$ such that $f(v)\leq f(\tilde u)\leq f(u)$, $\|v-\tilde u\|\leq \rho$ and
$f(w)\geq f(v)-\sigma \|w-v\|$, for every $w\in S'$. As in inequality \eqref{diffsymmcontro}, it readily follows
$\|v-v^*\|_V\leq 2K\|v-\tilde u\|+\|\tilde u-u^*\|_V<(2K+1)\rho$. Finally 
$\|v-u\|\leq \|v-\tilde u\|+\|\T_\rho u-u\|\leq \rho+\|\T_\rho u-u\|$, concluding the proof. 
\end{proof}

\noindent
Notice that, in Banach spaces, essentially conclusion (b) of Theorem \ref{ekelcor} can be recovered by (e) of Theorem~\ref{mainthm}
with $p=1$, since $\|v-\eta\|-\|w-\eta\|\geq -\|w-v\|$ for all $w\in X$.
\vskip1pt

\noindent
On Banach spaces, we can state the following

\begin{theorem}[{Symmetric Ekeland's principle, II}]  
	\label{ekelcorII}
	Assume that $X$ is a Banach space and that $f:X\to\R\cup\{+\infty\}$ is a proper and lower 
	semi-continuous functional bounded from below such that \eqref{assumptionpol-00} holds. Moreover,
	assume that for all $u\in \dom{f}$ there exists $\xi\in S$ such that $f(\xi)\leq f(u)$.
	Then for every $\rho>0$ and $\sigma>0$ there exists $v\in X$ with
	\begin{enumerate}
	\item[(a)] $\|v-v^*\|_V< (K(C_\Theta+1)+1)\rho$; 
	\item[(b)]   $f(w)\geq f(v)-\sigma \|w-v\|,$\quad\text{for all $w\in X$.}
	\end{enumerate}
	In addition, one can assume that $f(v)\leq f(u)$ and $\|v-u\|\leq \rho+\|\T_\rho u-u\|$, where $u\in S$
	is some element which satisfies $f(u)\leq \inf f+\sigma\rho$.
\end{theorem}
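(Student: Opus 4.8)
The plan is to run the argument of Theorem~\ref{ekelcor} on the whole space $X$, which under $d(u,v)=\|u-v\|$ is itself a complete metric space since $X$ is Banach, and to use the extra hypothesis only in order to relocate the starting quasi-minimizer inside $S$.

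First I would fix $\rho>0$ and $\sigma>0$ and, using that $f$ is proper, bounded below and lower semi-continuous, pick $u_0\in\dom{f}$ with $f(u_0)\le\inf f+\sigma\rho$. The additional assumption then yields some $\xi\in S$ with $f(\xi)\le f(u_0)\le\inf f+\sigma\rho$, and I rename $u:=\xi\in S$. This relocation is the only ingredient absent from the proof of Theorem~\ref{ekelcor}, and it is precisely where the hypothesis ``for all $u\in\dom{f}$ there exists $\xi\in S$ with $f(\xi)\le f(u)$'' enters.

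Next I would symmetrize: set $\tilde u:=\T_\rho u\in S$ with $\T_\rho$ the map of Proposition~\ref{mapJvS}, so that $\|\tilde u-u^*\|_V<\rho$, and observe that, since $\tilde u$ is obtained from $u$ by iterated polarizations, \eqref{assumptionpol-00} gives $f(\tilde u)\le f(u)\le\inf f+\sigma\rho$. Applying Ekeland's variational principle \cite[Theorem 1.1]{ekeland1} on the complete metric space $(X,\|\cdot\|)$, with $\tilde u$ as the approximate minimizer, produces $v\in X$ such that $f(v)\le f(\tilde u)$, $\|v-\tilde u\|\le\rho$ and $f(w)\ge f(v)-\sigma\|w-v\|$ for every $w\in X$. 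This is conclusion (b); moreover $f(v)\le f(\tilde u)\le f(u)$ and $\|v-u\|\le\|v-\tilde u\|+\|\tilde u-u\|\le\rho+\|\T_\rho u-u\|$, which are the two ``in addition'' claims.

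Finally, for (a) I would use $u^*=\tilde u^*$, which follows from property (3) of Definition~\ref{abssym} together with the way $\tilde u$ is built from $u$, and estimate in $V$
\[
\|v-v^*\|_V\le\|v-\tilde u\|_V+\|\tilde u-u^*\|_V+\|u^*-v^*\|_V .
\]
Here $\|v-\tilde u\|_V\le K\|v-\tilde u\|\le K\rho$ with $K$ the norm of $X\hookrightarrow V$, $\|\tilde u-u^*\|_V<\rho$ by the choice of $\T_\rho$, and, by the second inequality in \eqref{contractivvv}, $\|u^*-v^*\|_V=\|\tilde u^*-v^*\|_V\le C_\Theta\|\tilde u-v\|_V\le C_\Theta K\|\tilde u-v\|\le C_\Theta K\rho$; adding these gives $\|v-v^*\|_V<(K(C_\Theta+1)+1)\rho$, that is (a). I do not expect any genuine obstacle here: the whole proof is a bookkeeping variant of Theorem~\ref{ekelcor}, the only new point being the preliminary reduction to a starting point in $S$, which is exactly what the added hypothesis supplies; the constant $C_\Theta$ (rather than $1$) appears because the final point $v$ lies in $X$ and not necessarily in $S$, so the estimate for $\|u^*-v^*\|_V$ must go through \eqref{contractivvv}.
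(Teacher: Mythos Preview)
Your proposal is correct and follows essentially the same approach as the paper: relocate the starting quasi-minimizer into $S$ via the extra hypothesis, then rerun the argument of Theorem~\ref{ekelcor} with Ekeland's principle applied on all of $X$ rather than on a closed subset. Your expanded derivation of (a), including the use of $u^*=\tilde u^*$ and \eqref{contractivvv}, matches inequality~\eqref{diffsymmcontro} in the proof of Theorem~\ref{mainthm}, to which the paper implicitly refers.
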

\begin{proof}
Let $u\in \dom{f}$ with $f(u)\leq \inf f+\sigma\rho$. Then, let
$\xi\in S$ with $f(\xi)\leq \inf f+\sigma\rho$. At this stage, one can proceed as in the proof
of Theorem \ref{ekelcor}, with Ekeland's principle now applied to $f$ defined on the whole $X$,
yielding a $v\in X$ with the desired properties.
\end{proof}

\noindent
Let now $f,f_h:X\to\R\cup\{+\infty\}$ be lower semi-continuous functionals such that:
\begin{equation}
	\label{gamma1}
\text{for any $u\in\dom{f}\cap S$, there is $(u_h)\subset S$ with $u_h\to u$ and $f_h(u_h)\to f(u)$,}
\end{equation}
and
\begin{equation}
	\label{gamma2}
\liminf_h\big(\inf_X f_h\big)\geq \inf_X f.
\end{equation}
As pointed out in \cite{corvel}, in some sense, this means that 
the function $f$ is the uniform $\Gamma$-limit of the sequence $(f_h)$. 
In the framework of Definition~\ref{abssym} we introduce the following
\begin{definition}
We set $X_{{{\mathcal H}_*}}:=\{u\in S: u^H=u,\,\,\,\text{for all $H\in {\mathcal H}_*$}\}$.
\end{definition}

\begin{remark}\rm
In the framework of Definition~\ref{abssym}, the space $(X_{{{\mathcal H}_*}},\|\cdot\|)$ is complete, as it is closed in $X$. Conversely, assume only that
the conclusion of the symmetric Ekeland principle holds true for the subclass of lower semi-continuous functionals
$f:(X,\|\cdot\|_V)\to \R\cup\{+\infty\}$ bounded from below and
which are not increasing under polarization of elements $u\in S$. Then $(X_{{{\mathcal H}_*}},\|\cdot\|_V)$ is 
complete if $u^H$ is contractive with respect to $\|\cdot\|_V$. In fact, let $(u_h)$ be a Cauchy sequence in $(X_{{{\mathcal H}_*}},\|\cdot\|_V)$. Defining
$f:X\to\R^+$ by $f(u):=\lim_j \|u_j-u\|_V$, then $f$ is continuous and $f(u_h)\to 0$ as $h\to\infty$, 
yielding $\inf f=0$. Observe also that, by contractivity,
\begin{equation*}
f(u^H)=\lim_j \|u_j-u^H\|_V=\lim_j \|u_j^H-u^H\|_V\leq \lim_j \|u_j-u\|_V=f(u), 
\end{equation*}
for all $H\in {\mathcal H}_*$ and $u\in S$ and, for all $u\in X$,
\begin{equation*}
f(\Theta(u))=\lim_j \|u_j-\Theta(u)\|_V=\lim_j \|\Theta(u_j)-\Theta(u)\|_V\leq\lim_j \|u_j-u\|_V=f(u).
\end{equation*}
Given $\eps\in (0,1)$, there is $v\in X$ with $f(v)\leq \eps^2$, $\|v-v^*\|_V<\eps$ and
$f(w)\geq f(v)-\eps\|w-v\|_V$, for all $w\in X$. By choosing $w=u_j$ and letting $j\to\infty$, it holds
$f(v)\leq \eps f(v)$, namely $\|u_h-v\|_V\to 0$ as $h\to\infty$. Moreover $v=v^*$, by the arbitrariness 
of $\eps$. Hence $v\in {\mathcal H}_*$.
\end{remark}

\noindent
Under the above conditions \eqref{gamma1}-\eqref{gamma2}, we have a symmetric version of an Ekeland type principle 
proposed by Corvellec \cite[Proposition 1]{corvel}.

\begin{theorem}[{Symmetric Ekeland's principle, III}]  
	\label{ekelcorIII}
	Assume that $X$ is a Banach space and  that $f,f_h:X\to\R\cup\{+\infty\}$ are proper lower 
	semi-continuous functionals with $f,f_h$ bounded from below satisfying 
	conditions \eqref{gamma1}-\eqref{gamma2}. Moreover, assume that
	\begin{equation}
		\label{assumptionpol-1}
	\text{$f_h(u^H)\leq f_h(u),$\quad for all $u\in S$, $H\in {\mathcal H}_*$ and $h\in\N$}.
	\end{equation}
	Let $Y$ be a nonempty subset of $S$, $\rho>0$ and $\sigma>0$ such that
	$$
	\inf_Y f<\inf_X f+\sigma\rho.
	$$
	Then, for every $h_0\geq 1$ there exist $h\geq h_0$, $m>1$, $(u_h)\subset S$ and $(v_h)\subset X$ such that
	\begin{enumerate}
	\item[(a)] $\|v_h-v^*_h\|_V< (K(C_\Theta+1)+1)\rho$;
	\item[(b)] $|f_h(v_h)-\inf_X f|<\sigma\rho$;
	\item[(c)] $d(v_h,Y)<\rho+\|T_{(m-1)\rho/m}u_h-u_h\|$;
	\item[(d)]   $f_h(w)\geq f_h(v_h)-\sigma \|w-v_h\|,$\quad\text{for all $w\in X$.}
	\end{enumerate}
	In particular, if $f_h=f$ for all $h\in\N$ and $Y\subset X_{{{\mathcal H}_*}}$,
	there exists $v\in X$ such that
	\begin{enumerate}
	\item[(a)] $\|v-v^*\|_V< (K(C_\Theta+1)+1)\rho$;
	\item[(b)] $|f(v)-\inf_X f|<\sigma\rho$;
	\item[(c)] $d(v,Y)<\rho$;
	\item[(d)] $f(w)\geq f(v)-\sigma \|w-v\|,$\quad\text{for all $w\in X$.}
	\end{enumerate}
\end{theorem}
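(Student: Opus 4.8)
The plan is to combine the argument behind Corvellec's principle \cite[Proposition 1]{corvel} --- Ekeland's principle on $(X,\|\cdot\|)$ together with the $\Gamma$-convergence conditions \eqref{gamma1}--\eqref{gamma2} --- with the symmetrization device of Proposition~\ref{mapJvS}, exactly as Theorems~\ref{mainthm} and \ref{ekelcor} were derived from their classical versions. Write $c:=\inf_X f$, which is finite since $f$ is proper and bounded below. First I would fix the data. Since $\inf_Y f<c+\sigma\rho$, pick $u_0\in Y\subset S$ with $f(u_0)<c+\sigma\rho$; since $\frac{m-1}{m}\rho\to\rho$ as $m\to\infty$, pick $m>1$ so large that, writing $\rho':=\frac{m-1}{m}\rho<\rho$, one still has $f(u_0)<c+\sigma\rho'$. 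As $u_0\in\dom{f}\cap S$, condition \eqref{gamma1} provides a sequence $(u_h)\subset S$ with $u_h\to u_0$ and $f_h(u_h)\to f(u_0)$; this is the sequence $(u_h)$ of the statement. Setting $\tilde u_h:=\T_{\rho'}u_h\in S$ with $\T_{\rho'}$ as in Proposition~\ref{mapJvS}, one has $\|\tilde u_h-u_h^*\|_V<\rho'$; moreover $\tilde u_h$ is built from $u_h$ by iterated polarizations, so \eqref{assumptionpol-1} gives $f_h(\tilde u_h)\le f_h(u_h)$, and part (3) of Definition~\ref{abssym} applied along those polarizations gives $\tilde u_h^*=u_h^*$.

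Next I would pass to large indices. Since $f_h(u_h)\to f(u_0)<c+\sigma\rho'$, since $\liminf_h\inf_X f_h\ge c$ by \eqref{gamma2}, and since $u_h\to u_0$, there exists $h_1\ge h_0$ such that for every $h\ge h_1$ one has at the same time
\[
f_h(\tilde u_h)\le f_h(u_h)<\inf_X f_h+\sigma\rho',\qquad \inf_X f_h>c-\sigma\rho,\qquad \|u_h-u_0\|<\rho-\rho'.
\]
Fix any such $h\ge h_1$; it will be the index $h\ge h_0$ of the statement. Applying Ekeland's variational principle \cite[Theorem 1.1]{ekeland1} to the proper, bounded below, lower semi-continuous functional $f_h$ on the complete metric space $(X,\|\cdot\|)$, starting from $\tilde u_h$ and with parameters $\eps=\sigma\rho'$ and $\lambda=\rho'$, yields $v_h\in X$ such that $f_h(v_h)\le f_h(\tilde u_h)$, $\|v_h-\tilde u_h\|\le\rho'$, and $f_h(w)\ge f_h(v_h)-\sigma\|w-v_h\|$ for every $w\in X$, which is precisely (d).

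The remaining conclusions follow from estimates already performed for Theorems~\ref{mainthm} and \ref{ekelcor}. For (a), reasoning as in \eqref{diffsymmcontro} through \eqref{contractivvv}, the embedding constant $K$ of $X\hookrightarrow V$, the identity $\tilde u_h^*=u_h^*$, and $\|\tilde u_h-u_h^*\|_V<\rho'$,
\[
\|v_h-v_h^*\|_V\le K(C_\Theta+1)\|v_h-\tilde u_h\|+\|\tilde u_h-u_h^*\|_V<(K(C_\Theta+1)+1)\rho'<(K(C_\Theta+1)+1)\rho.
\]
For (b): $f_h(v_h)\le f_h(\tilde u_h)\le f_h(u_h)<c+\sigma\rho$ from above, $f_h(v_h)\ge\inf_X f_h>c-\sigma\rho$ from below, hence $|f_h(v_h)-\inf_X f|<\sigma\rho$. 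For (c), since $u_0\in Y$,
\[
d(v_h,Y)\le\|v_h-\tilde u_h\|+\|\tilde u_h-u_h\|+\|u_h-u_0\|<\rho'+\|\T_{\rho'}u_h-u_h\|+(\rho-\rho')=\rho+\|\T_{(m-1)\rho/m}u_h-u_h\|.
\]
When $f_h\equiv f$ and $Y\subset X_{{\mathcal H}_*}$, one may simply take the constant sequence $u_h\equiv u_0$, for which \eqref{gamma1}--\eqref{gamma2} are trivial; since $u_0^H=u_0$ for all $H\in{\mathcal H}_*$, the construction of $\T_{\rho'}$ gives $\T_{\rho'}u_0=u_0$, so $\tilde u_h=u_0$, the last term in (c) vanishes, and the above produces $v\in X$ satisfying (a)--(d) of the second list with $d(v,Y)<\rho'<\rho$.

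The only genuinely delicate point is the second step: the single margin $\rho-\rho'=\rho/m$ has to absorb at once the strict gap in $\inf_Y f<\inf_X f+\sigma\rho$, the merely asymptotic bound $\liminf_h\inf_X f_h\ge\inf_X f$ supplied by \eqref{gamma2}, and the displacement $\|u_h-u_0\|$ coming from \eqref{gamma1}. Once $m$ (equivalently $\rho'$) is fixed accordingly, the rest is a line-by-line rerun of the proof of Theorem~\ref{ekelcor}.
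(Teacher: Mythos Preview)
Your proof is correct and follows essentially the same approach as the paper's: transfer from $f$ to $f_h$ via \eqref{gamma1}--\eqref{gamma2}, symmetrize the approximating point through $\T_{\rho'}$, and then apply Ekeland's principle. The only notable difference is bookkeeping: the paper, following \cite{corvel} closely, introduces auxiliary constants $\hat\sigma<\tilde\sigma<\sigma$ and applies Theorem~\ref{ekelcorII} with $\sigma$ replaced by $m\tilde\sigma/(m-1)$ and $\rho$ by $(m-1)\rho/m$, so that the slope constant $m\tilde\sigma/(m-1)$ stays below $\sigma$; you instead keep $\sigma$ fixed and shrink only $\rho$ to $\rho'=(m-1)\rho/m$, applying Ekeland directly with parameters $(\sigma\rho',\rho')$. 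Your route is a bit cleaner since it avoids the extra constants, while the paper's route makes the dependence on Corvellec's argument and on Theorem~\ref{ekelcorII} more explicit. One small presentational point: in your step verifying (b), the upper bound $f_h(u_h)<c+\sigma\rho$ is not literally among the three displayed conditions at index $h\ge h_1$, but it follows immediately from $f_h(u_h)\to f(u_0)<c+\sigma\rho'$; you may want to list it explicitly.
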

\begin{proof}
Given $h_0\geq 1$, $\rho>0$ and $\sigma>0$, taking into account 
\eqref{gamma1}-\eqref{gamma2}, arguing as in the proof of \cite[Proposition 1]{corvel},
one finds $u\in Y\cap \dom{f}$, $m>1$, $\hat\sigma\in (0,\sigma)$,
$\tilde\sigma\in (\hat\sigma,\sigma)$ with $m\tilde\sigma/(m-1)<\sigma$ 
and $f(u)<\inf f+\hat\sigma\rho$, and points $u_h\in S\cap \dom{f_h}$ such that 
$$
\|u_h-u\|<\rho/m,
\quad\,\,
\inf_X f_h\geq \inf_X f-\frac{(\tilde\sigma-\hat\sigma)\rho}{2},
\quad\,\,
f_h(u_h)\leq f(u)+\frac{(\tilde\sigma-\hat\sigma)\rho}{2},
$$
and, in turn,
\begin{equation}
	\label{startinginequall}
f_h(u_h)<\inf_X f_h+\tilde\sigma\rho.
\end{equation}
By means of~\eqref{assumptionpol-1} condition \eqref{assumptionpol-00} is satisfied for the functionals $f_h$.
Therefore, in light of Theorem~\ref{ekelcorII} (applied to $f_h$, starting from the point $u_h$, see \eqref{startinginequall} above)
with $\sigma$ replaced by $m\tilde\sigma/(m-1)$ and $\rho$ replaced by $(m-1)\rho/m$
respectively, there exist $v_h\in X$ such that 
\begin{align*}
& f_h(v_h)\leq f_h(u_h),\quad 
\|v_h-v^*_h\|_V < (K(C_\Theta+1)+1)\frac{m-1}{m}\rho<(K(C_\Theta+1)+1)\rho,  \\
& f_h(w)\geq f_h(v_h)-\frac{m}{m-1}\tilde\sigma \|w-v_h\|
\geq f_h(v_h)-\sigma \|w-v_h\|,\quad\text{for all $w\in X$.}
\end{align*}
Also, it holds $|f_h(v_h)-\inf f|<\sigma\rho$, since
$$
\inf_X f-\sigma\rho<
\inf_X f-\frac{(\tilde\sigma-\hat\sigma)\rho}{2}
\leq f_h(v_h)\leq f_h(u_h)\leq f(u)+\frac{(\tilde\sigma-\hat\sigma)\rho}{2}<\inf f+\sigma\rho.
$$
Moreover, noticing that $\|v_h-u_h\|<(m-1)\rho/m+\|T_{(m-1)\rho/m}u_h-u_h\|$, it holds
$$
d(v_h,Y)\leq \|v_h-u\|\leq \|v_h-u_h\|+ \|u_h-u\|<\rho+\|T_{(m-1)\rho/m}u_h-u_h\|.
$$
The last conclusion of the statement can be easily obtained by taking into account that
$T_\rho u=u$, for all $\rho>0$ and $u\in Y\subset X_{{{\mathcal H}_*}}$.
\end{proof}

\noindent
Based upon the strong Ekeland's principle stated by Georgiev in \cite{georgiev}, which exhibits
some additional {\em stability features}, we formulate the following symmetric version.

\begin{theorem}[{Symmetric Ekeland's principle, IV}]  
	\label{ekelcor4}
	Assume that $X$ is a Banach space and that $f:X\to\R\cup\{+\infty\}$ is a proper and lower 
	semi-continuous functional bounded from below such that \eqref{assumptionpol-00} holds. 
	Then for every $\rho_1,\rho_2>0$, $\sigma>0$ and $u\in S$ such that
	$$
	f(u)<\inf_X f+\sigma\rho_1,
	$$
	there exists a point $v\in X$ such that
	\begin{enumerate}
	\item[(a)] $\|v-v^*\|_V< (K(C_\Theta+1)+1)(\rho_1+\rho_2)$; 
	\item[(b)]   $f(w)\geq f(v)-\sigma \|w-v\|,$\quad\text{for all $w\in X$.}
	\item[(c)] for every sequence $(u_h)\subset X$, it follows
	$$
	\lim_h (f(u_h)+\sigma \|u_h-v\|)=f(v)\quad\Rightarrow\quad
	\lim_h u_h=v.
	$$
	\end{enumerate}
\end{theorem}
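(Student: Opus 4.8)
The plan is to follow, almost verbatim, the scheme already employed for Theorems~\ref{mainthm}, \ref{ekelcor} and \ref{ekelcorII}, simply replacing the classical Ekeland principle in the final step by Georgiev's strong variational principle \cite{georgiev}. Fix $\rho_1,\rho_2>0$, $\sigma>0$ and $u\in S$ with $f(u)<\inf_X f+\sigma\rho_1$. Let $\T_{\rho_2}:S\to S$ be the approximation map of Proposition~\ref{mapJvS} and set $\tilde u:=\T_{\rho_2}u\in S$, so that $\|\tilde u-u^*\|_V<\rho_2$. Since $\tilde u$ is obtained from $u$ through a finite chain of polarizations, hypothesis~\eqref{assumptionpol-00} gives $f(\tilde u)\leq f(u)<\inf_X f+\sigma\rho_1$; in particular the strict near-minimality is inherited by $\tilde u$.

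Next I would apply Georgiev's strong Ekeland principle on the Banach space $X$, taking $\tilde u$ as starting point, $\sigma$ as perturbation slope and $\rho_1$ as localization radius (if the strong form demands a little extra room in the localization, one runs $\T$ with a smaller parameter instead, letting $\rho_2$ absorb both the polarization error and that slack). This produces $v\in X$ with $f(v)\leq f(\tilde u)$, $\|v-\tilde u\|\leq\rho_1$, the perturbed minimality $f(w)\geq f(v)-\sigma\|w-v\|$ for all $w\in X$, and the stability feature that $f(u_h)+\sigma\|u_h-v\|\to f(v)$ forces $u_h\to v$. Conclusions (b) and (c) of the statement then follow immediately: both are assertions about $f$ and the single point $v$, hence they transfer unchanged from Georgiev's conclusion, with no need to revisit $\tilde u$ or $u$.

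It remains to obtain (a), which I would do by repeating the estimate behind~\eqref{diffsymmcontro}: using $\tilde u^*=u^*$ (a consequence of property~(3) of Definition~\ref{abssym} together with the way $\tilde u$ is built from $u$), the contractivity bounds~\eqref{contractivvv}, and the continuity constant $K$ of the injection $X\hookrightarrow V$, one finds $\|v-v^*\|_V\leq K(C_\Theta+1)\|v-\tilde u\|+\|\tilde u-u^*\|_V< K(C_\Theta+1)\rho_1+\rho_2\leq (K(C_\Theta+1)+1)(\rho_1+\rho_2)$, which is precisely (a). The only point asking for any care is the invocation of Georgiev's principle itself: checking that its hypotheses are satisfied by the strictly near-minimal point $\tilde u$ and that its localization can be kept inside the budget $\rho_1+\rho_2$ while still yielding the stability conclusion (c). Once this is granted, the argument is a routine assembly of ingredients already developed in Section~\ref{varprinciples}, so I do not expect a substantial obstacle.
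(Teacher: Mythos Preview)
Your proposal is correct and follows essentially the same route as the paper: polarize the near-minimizer $u$ via Proposition~\ref{mapJvS}, then apply Georgiev's strong principle \cite[Theorem~1.6]{georgiev} at the resulting point $\tilde u$, and finish with the estimate~\eqref{diffsymmcontro}. The only cosmetic difference is the allocation of parameters: the paper takes $\tilde u=\T_{\rho_1+\rho_2}u$ and lets Georgiev's localization be $\|v-\tilde u\|<\rho_1+\rho_2$ (the extra $\rho_2$ being exactly the slack that the strong principle requires beyond the standard Ekeland radius $\rho_1$), whereas you take $\tilde u=\T_{\rho_2}u$ and plan to absorb that slack into $\rho_2$ afterwards---your parenthetical remark already anticipates this, and either bookkeeping yields the bound in~(a).
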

\begin{proof}
Given $\rho_1,\rho_2>0$ and $\sigma>0$, let $u\in S$ be such that $f(u)<\inf f+\sigma\rho_1$. 
If $\T_\rho:S\to S$ is the map of Proposition \ref{mapJvS}, let $\tilde u=\T_{\rho_1+\rho_2} u\in S$.
Then $\|\tilde u-u^*\|_V<\rho_1+\rho_2$ and, taking into account \eqref{assumptionpol-00}, 
$f(\tilde u)<\inf f+\sigma\rho_1$. By \cite[Theorem 1.6]{georgiev} there exists $v\in X$
such that (b) and (c) hold and $\|v-\tilde u\|<\rho_1+\rho_2$. Then 
$\|v-v^*\|_V < (K(C_\Theta+1)+1)(\rho_1+\rho_2)$, by arguing as in the previous proofs.
\end{proof}

\noindent
In some situations, a version of Ekeland's variational principle, sometimes called
{\em altered} principle, has revealed very useful \cite{penot}. 
Here follows a symmetric version of it. A similar statement holds 
with $S$ in place of $X$, when $S$ is closed.

\begin{theorem}[{Symmetric Ekeland's principle, V}]  
	\label{ekelcorV}
	Assume that $X$ is a Banach space and that $f:X\to\R\cup\{+\infty\}$ is a proper and lower 
	semi-continuous bounded below functional such that \eqref{assumptionpol-00} holds. 
	Then, for every $u\in S$, $\rho>0$ and $\sigma>0$ there exists an element $v\in X$ such that
	\begin{enumerate}
	\item[(a)]   $f(w)>f(v)-\sigma \|w-v\|,$\quad\text{for all $w\in X\setminus\{v\}$.}
    \item[(b)]   $f(v)\leq f(u)-\sigma \|v-\T_\rho u\|$.
	\end{enumerate}
	If in addition $u\in X_{{{\mathcal H}_*}}$, then {\rm (b)} strengthens into 
	$f(v)\leq f(u)-\sigma \|v-u\|$.
\end{theorem}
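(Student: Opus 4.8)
The plan is to mimic the proofs of the previous symmetric principles: take the point $u\in S$ with $f(u)<\inf_X f+\sigma\rho_1$ — here of course $\sigma\rho$ — first replace it by a nearly-symmetric point obtained from the approximation map $\T_\rho$, and then apply the classical \emph{altered} Ekeland principle in the spirit of \cite{penot}. More precisely, set $\tilde u:=\T_\rho u\in S$, so that $\|\tilde u-u^*\|_V<\rho$ and, by \eqref{assumptionpol-00} together with the fact that $\tilde u$ is obtained from $u$ by iterated polarizations, $f(\tilde u)\leq f(u)$. The altered principle applied to the lower semi-continuous bounded below functional $f$ on the Banach space $X$, starting from $\tilde u$, produces $v\in X$ such that $f(w)>f(v)-\sigma\|w-v\|$ for all $w\neq v$, i.e.\ conclusion (a), and such that the descent inequality $f(v)\leq f(\tilde u)-\sigma\|v-\tilde u\|$ holds. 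Since $\tilde u=\T_\rho u$ and $f(\tilde u)\leq f(u)$, this yields $f(v)\leq f(u)-\sigma\|v-\T_\rho u\|$, which is exactly (b).

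The second step is the final sentence: if $u\in X_{{{\mathcal H}_*}}$, then $u^H=u$ for every $H\in{\mathcal H}_*$, and since $\T_\rho u$ is built from $u$ through iterated polarizations one has $\T_\rho u=u$ for every $\rho>0$ (this is the same observation already used at the end of the proof of Theorem~\ref{ekelcorIII} and in the first Remark after Theorem~\ref{mainthm}). Substituting $\T_\rho u=u$ into (b) immediately gives the strengthened inequality $f(v)\leq f(u)-\sigma\|v-u\|$, and no nearly-symmetric correction is needed.

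The only genuine point of care is that the statement of Theorem~\ref{ekelcorV} does not list an almost-symmetry estimate of the form $\|v-v^*\|_V<\text{const}\cdot\rho$ among its conclusions; it only records (a) and (b). So, unlike in the proofs of Theorems~\ref{mainthm}--\ref{ekelcor4}, I do not need the triangle-inequality chain through \eqref{contractivvv}; I merely need to correctly quote the altered Ekeland principle and track the constant $\sigma$ through the descent inequality. One should double-check that the version of the altered principle being invoked indeed provides the \emph{strict} inequality in (a) for all $w\neq v$ (this is the characteristic feature of the altered form) and the descent estimate $f(v)\le f(u_0)-\sigma\|v-u_0\|$ from the chosen starting point $u_0$; with $u_0=\tilde u$ these are exactly (a) and the precursor of (b).

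The main obstacle, such as it is, is therefore not analytical but bookkeeping: making sure the classical altered principle is stated with the same normalization of $\sigma$ (so that no spurious factor appears) and that one is allowed to choose the base point freely. Given the tools already developed — Proposition~\ref{mapJvS}, hypothesis \eqref{assumptionpol-00}, and the elementary fact $\T_\rho u=u$ on $X_{{{\mathcal H}_*}}$ — the argument is essentially a one-line reduction to the non-symmetric altered Ekeland principle, entirely parallel to the proof of Theorem~\ref{ekelcor4}.
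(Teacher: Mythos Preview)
Your argument is correct and coincides with the paper's proof: replace $u$ by $\tilde u=\T_\rho u$, apply Penot's altered Ekeland principle \cite[Theorem~A, p.~814]{penot} at $\tilde u$ to obtain (a) and $f(v)\le f(\tilde u)-\sigma\|v-\tilde u\|$, then use $f(\T_\rho u)\le f(u)$ from \eqref{assumptionpol-00} to get (b); the case $u\in X_{{\mathcal H}_*}$ follows from $\T_\rho u=u$. The only slip is your opening sentence: the hypothesis $f(u)<\inf_X f+\sigma\rho$ is \emph{not} assumed here---Theorem~\ref{ekelcorV} holds for every $u\in S$---so drop that clause (and the mention of $\|\tilde u-u^*\|_V<\rho$, which, as you yourself note later, plays no role in (a)--(b)).
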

\begin{proof}
Given $u\in S$, $\rho>0$ and $\sigma>0$, consider $\T_\rho u\in S$. 
By applying \cite[Theorem A, p.814]{penot} to $\T_\rho u$ and taking into account that
$f(\T_\rho u)\leq f(u)$ by \eqref{assumptionpol-00}, we get an element $v\in X$ satisfying
properties (a) and (b). 
\end{proof}

\begin{remark}\rm
	\label{ekelcorV-rmk}
	Let $u\in S$ be such that $f(u)\leq \inf f+\rho\sigma$, for 
	some $\rho,\sigma>0$. Then, in addition to the conclusions of Theorem~\ref{ekelcorV},
	it follows $\|v-v^*\|_V\leq \rho$, as in the previous statements. In fact, in light of (b) of Theorem~\ref{ekelcorV}, we have
	$$
	\|v-\T_\rho u\|\leq \frac{f(u)-f(v)}{\sigma}\leq \frac{f(u)-\inf f}{\sigma}\leq\rho,
	$$
	which is turn allows to get the desired conclusion taking into account that
	$\|\T_\rho u-u^*\|_V<\rho$. Also, one has $\|v-u\|\leq \rho+\|\T_\rho u-u\|$.
	In other words, Theorem~\ref{ekelcorV} is stronger than the previous
	statements in the fact that it holds for {\em any} point $u\in S$. On the other hand, the price 
	to be paid is that the {\em location} of $v$ with respect to $u$ is {\em no longer available}
	and it is recovered provided that $f(u)\leq \inf f+\rho\sigma$.
\end{remark}

\noindent
Let $X'$ denote the topological dual space of $X$.
We need to recall from \cite{deville} the following 

\begin{definition}
Let $X$ be a Banach space, $\beta$ a family of bounded 
subsets of $X$ which constitutes a bornology, $f:X\to\R\cup\{+\infty\}$
a functional and $u\in\dom{f}$. We say that $f$ is $\beta$-differentiable at $u$
with $\beta$-derivative $\varphi=f'(u)\in X'$ if
$$
\lim_{t\to 0}\frac{f(u+tw)-f(u)-\langle\varphi,tw \rangle}{t}=0
$$
uniformly for $w$ inside the elements of $\beta$. We denote by $\tau_\beta$ the topology on $X'$
of uniform convergence on the elements of $\beta$. 
\end{definition}

\noindent
When $\beta$ is the class of {\rm all} bounded subsets of $X$,
then the $\beta$-differentiability coincides with Fr\'echet differentiability and $\tau_\beta$ coincides
with the norm topology on $X'$. When $\beta$ is the class of all {\rm singletons} of $X$,
the $\beta$-differentiability coincides with Gateaux differentiability and $\tau_\beta$ is
the weak* topology on $X'$.
\vskip3pt
\noindent
We consider the Banach space $(X_\beta,\|\cdot\|_\beta)$ defined as follows
\begin{align*}
X_\beta &:=\{g:X\to\R:\, \text{$g$ is bounded, Lipschitzian and $\beta$-differentiable on $X$}\}, \\
\|g\|_\beta &:=\|g\|_\infty+\|g'\|_\infty,\qquad        \|g\|_\infty=\sup_{u\in X} |g(u)|,\,\,\,\,
\|g'\|_\infty=\sup_{u\in X} \|g'(u)\|.
\end{align*}

\begin{definition}
We say that $b\in X_\beta$ is a bump function if ${\rm supt}(b)\neq\emptyset$ is bounded.
\end{definition}

\noindent
Next we recall a localized version of Deville-Godefroy-Zizler's 
variational principle (see \cite[Corollary II.4 and Remark II.5]{deville}).

\begin{theorem}[{Deville-Godefroy-Zizler's principle}]
	\label{deville}
Assume that $X$ is a Banach space which admits a bump function in $X_\beta$ and let
$f:X\to\R\cup\{+\infty\}$ be a proper and lower 
semi-continuous functional bounded from below. 
Then there exists a positive number ${\mathcal A}$ such that, for all $\eps\in (0,1)$,
and $u\in X$ with $f(u)<\inf f+{\mathcal A}\eps^2$, there exists $g\in X_\beta$ and $v\in X$ such that
\begin{enumerate}
\item[(a)]  $\|v-u\|\leq \eps$;
\item[(b)]   $\|g\|_\infty\leq \eps$ and $\|g'\|_\infty\leq \eps$;
\item[(c)]   $f(w)+g(w)\geq f(v)+g(v),$\quad\text{for all $w\in X$}.
\end{enumerate}
\end{theorem}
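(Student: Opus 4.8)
The plan is to construct $g$ and $v$ by an explicit ``bump subtraction'' iteration: at step $n$ I dig a small well, shaped like a fixed rescaled bump, under the graph of $f+g_n$ at the current approximate minimiser $x_n$, choosing amplitudes and supports so that the total perturbation and the total displacement both stay below the budget $\eps$ dictated by (a)--(b). \emph{First I would normalise the bump and fix ${\mathcal A}$.} If $b\in X_\beta$ is a bump function, then so is $b^2$ (bounded, Lipschitzian, $\beta$-differentiable, with the same bounded nonempty support), so I may assume $b\ge0$; translating, $b(0)>0$; dilating the variable — which keeps the function in $X_\beta$ for the bornologies of interest — I may assume $b$ vanishes outside $B(0,1)$. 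Put $\gamma:=b(0)>0$, $A:=\|b\|_\infty$, $L:=\|b'\|_\infty$, and set ${\mathcal A}:=\min\{3\gamma/(5A),\,\gamma/(5L)\}$, a constant depending only on $X$.

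\emph{Step 2: the iteration.} Fix $\eps\in(0,1)$ and $u$ with $f(u)<\inf_Xf+{\mathcal A}\eps^2$. I set $x_0:=u$, $g_0:=0$, $\lambda_n:={\mathcal A}\,4^{-n}\eps^2$, and inductively, assuming $f(x_n)+g_n(x_n)<\inf_X(f+g_n)+\lambda_n$ (true for $n=0$), I let
$$
b_n(x):=\mu_n\, b\Big(\tfrac{x-x_n}{r_n}\Big),\qquad \mu_n:=\tfrac{5{\mathcal A}}{4\gamma}\,4^{-n}\eps^2,\quad r_n:=\tfrac{5{\mathcal A} L}{2\gamma}\,2^{-n}\eps,
$$
$g_{n+1}:=g_n-b_n$, and pick $x_{n+1}$ with $f(x_{n+1})+g_{n+1}(x_{n+1})<\inf_X(f+g_{n+1})+\lambda_{n+1}$ (this slice is nonempty since $\lambda_{n+1}>0$). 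Each $b_n$ lies in $X_\beta$, is $\ge0$, vanishes outside $B(x_n,r_n)$, and satisfies $b_n(x_n)=\mu_n\gamma$, $\|b_n\|_\infty=\mu_nA$, $\|b_n'\|_\infty=\mu_nL/r_n$; hence every $f+g_n$ stays proper, lower semicontinuous and bounded below. The constants are tuned so that $\lambda_n+\lambda_{n+1}\le\gamma\mu_n$, and this is what closes the induction: off $B(x_n,r_n)$ one has $f+g_{n+1}=f+g_n\ge\inf_X(f+g_n)$, whereas $\inf_X(f+g_{n+1})\le f(x_n)+g_{n+1}(x_n)<\inf_X(f+g_n)+\lambda_n-\gamma\mu_n\le\inf_X(f+g_n)-\lambda_{n+1}$; so the slice $S_{n+1}:=\{f+g_{n+1}<\inf_X(f+g_{n+1})+\lambda_{n+1}\}$ lies in $B(x_n,r_n)$, and in particular $\|x_{n+1}-x_n\|<r_n$.

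\emph{Step 3: passage to the limit.} Since $\sum_n4^{-n}$ and $\sum_n2^{-n}$ converge, the choice of ${\mathcal A}$ together with $\eps<1$ give $\sum_n\|b_n\|_\infty\le\eps$, $\sum_n\|b_n'\|_\infty\le\eps$ and $\sum_nr_n\le\eps$. Hence $g:=-\sum_{n\ge0}b_n$ converges in the Banach space $X_\beta$ with $\|g\|_\infty\le\eps$ and $\|g'\|_\infty\le\eps$, giving (b); and $(x_n)$ is Cauchy in $X$, with limit $v$ such that $\|v-u\|\le\sum_nr_n\le\eps$, giving (a). Finally $g_n\to g$ uniformly, hence $\inf_X(f+g_n)\to\inf_X(f+g)$; combined with $x_n\in S_n$ and $\lambda_n\to0$ this forces $(f+g)(x_n)\to\inf_X(f+g)$, and lower semicontinuity of $f+g$ at $v=\lim_nx_n$ then gives $(f+g)(v)=\inf_X(f+g)$, which is (c).

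\emph{Main obstacle.} The crux is the simultaneous bookkeeping of constants in Steps 2--3: the amplitude $\mu_n$ must dominate the slice width $\lambda_n$ (so that $S_{n+1}$ gets trapped inside the tiny ball $B(x_n,r_n)$), while $\mu_n/r_n$ must be summable (to control $\|g'\|_\infty$) and $r_n$ itself summable (to keep $v$ within $\eps$ of $u$); this tension is precisely why $\mu_n$ has to decay strictly faster ($4^{-n}$) than $r_n$ ($2^{-n}$), and why ${\mathcal A}$ must be taken comparable to $\min\{\gamma/A,\gamma/L\}$. A minor technical point is that $X_\beta$ has to be stable under translations and dilations of the argument, which is automatic for the Fr\'echet and Gateaux bornologies. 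Alternatively, one could first prove by a Baire category argument that $\{g\in X_\beta : f+g\text{ attains a strong minimum}\}$ is residual in $X_\beta$ and then localise by rescaling, but the direct construction above yields the quantitative bounds (a)--(b) in one stroke.
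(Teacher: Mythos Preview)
The paper does not give its own proof of this theorem: it is stated as a recalled result, with a citation to \cite{deville} (``see [Corollary II.4 and Remark II.5]''). So there is no in-paper argument to compare against.

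Your direct iterative construction is essentially correct and the bookkeeping checks out: with $\lambda_n={\mathcal A}4^{-n}\eps^2$, $\mu_n=\tfrac{5{\mathcal A}}{4\gamma}4^{-n}\eps^2$, $r_n=\tfrac{5{\mathcal A}L}{2\gamma}2^{-n}\eps$ one indeed has $\lambda_n+\lambda_{n+1}=\gamma\mu_n$, which traps $S_{n+1}$ inside $B(x_n,r_n)$, while the choices ${\mathcal A}\le 3\gamma/(5A)$ and ${\mathcal A}\le\gamma/(5L)$ make $\sum\|b_n\|_\infty$, $\sum\|b_n'\|_\infty$ and $\sum r_n$ all bounded by $\eps$. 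The passage to the limit is clean. Two small remarks: (i) the replacement $b\mapsto b^2$ and the translation/dilation steps require checking that $X_\beta$ is stable under these operations; you flag this, and it holds for any bornology since $\beta$-differentiability is pointwise and composes with affine changes of variable; (ii) the completeness of $(X_\beta,\|\cdot\|_\beta)$, which you use to conclude $g\in X_\beta$, is asserted in the paper but is itself a nontrivial fact.

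It is worth noting that your approach differs from the original proof in \cite{deville}, which instead shows via a Baire category argument that the set of $g\in X_\beta$ for which $f+g$ attains a strong minimum is residual, and then localises. You mention this alternative yourself. Your constructive route is closer in spirit to the iterative proofs of Borwein--Preiss type and has the advantage of delivering the quantitative bounds (a)--(b) directly, with an explicit constant ${\mathcal A}$; the Baire approach, by contrast, yields genericity of perturbations (a stronger qualitative statement) but requires a separate localisation step to pin down $v$ near $u$.
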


\noindent
Next, we state a symmetric version of Deville-Godefroy-Zizler's variational principle.

\begin{theorem}[{Symmetric Deville-Godefroy-Zizler's principle}]
	\label{mainthm-deville}
Assume that $X$ is a Banach space which admits a bump function in $X_\beta$ and let
$f:X\to\R\cup\{+\infty\}$ be a proper and lower 
semi-continuous functional bounded from below satisfying~\eqref{assumptionpol-00}.
Then there exists a positive number ${\mathcal A}$ such that, for all $\eps\in (0,1)$,
and $u\in S$ with $f(u)<\inf f+{\mathcal A}\eps^2$, there exists $g\in X_\beta$ and $v\in X$ such that
\begin{enumerate}
\item[(a)] $\|v-v^*\|_V< (K(C_\Theta+1)+1)\eps$; 
\item[(b)] $\|v-u\|\leq \eps+\|\T_\eps u-u\|$;
\item[(c)]  $\|g\|_\infty\leq \eps$ and $\|g'\|_\infty\leq \eps$;
\item[(d)]   $f(w)+g(w)\geq f(v)+g(v),$\quad\text{for all $w\in X$.}
\end{enumerate}
\end{theorem}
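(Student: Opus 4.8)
The plan is to mimic exactly the argument used for the symmetric Borwein--Preiss principle (Theorem~\ref{mainthm}), replacing the appeal to Theorem~\ref{BPprinc} with an appeal to the (non-symmetric) Deville--Godefroy--Zizler principle (Theorem~\ref{deville}). The symmetrization is inserted \emph{before} invoking the classical principle, so that the perturbing function $g$ is produced by the classical theorem applied to a point that is already nearly symmetric, and then the usual triangle-inequality bookkeeping transfers the near-symmetry to $v$.

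Concretely: fix $\eps\in(0,1)$ and $u\in S$ with $f(u)<\inf f+\mathcal A\eps^2$, where $\mathcal A>0$ is the constant furnished by Theorem~\ref{deville} for $f$ on $X$. Let $\T_\eps:S\to S$ be the map of Proposition~\ref{mapJvS} and set $\tilde u:=\T_\eps u\in S$, so that $\|\tilde u-u^*\|_V<\eps$. Since $\tilde u$ is obtained from $u$ by iterated polarizations and $f$ is non-increasing under polarization by~\eqref{assumptionpol-00}, we get $f(\tilde u)\le f(u)<\inf f+\mathcal A\eps^2$. Now apply Theorem~\ref{deville} to $f$ starting from the point $\tilde u$: there exist $g\in X_\beta$ and $v\in X$ with $\|v-\tilde u\|\le\eps$, $\|g\|_\infty\le\eps$, $\|g'\|_\infty\le\eps$, and $f(w)+g(w)\ge f(v)+g(v)$ for all $w\in X$. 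This gives (c) and (d) immediately.

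For (a), use the second inequality in~\eqref{contractivvv} together with the continuity constant $K$ of $X\hookrightarrow V$ and the identity $u^*=\tilde u^*$ (from property (3) of Definition~\ref{abssym} and the way $\tilde u$ is built from $u$), exactly as in~\eqref{diffsymmcontro}:
$$
\|v-v^*\|_V\le \|v-\tilde u\|_V+\|\tilde u^*-v^*\|_V+\|\tilde u-\tilde u^*\|_V
\le K(C_\Theta+1)\|v-\tilde u\|+\|\tilde u-u^*\|_V<(K(C_\Theta+1)+1)\eps.
$$
For (b), simply write $\|v-u\|\le\|v-\tilde u\|+\|\tilde u-u\|\le\eps+\|\T_\eps u-u\|$. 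There is no real obstacle here: the only point requiring a slight check is that Theorem~\ref{deville} is used with the \emph{same} $u$-independent constant $\mathcal A$, which is legitimate because $\mathcal A$ depends only on $f$ and $X$, not on the starting point; the rest is the verbatim triangle-inequality pattern already established in the proofs of Theorems~\ref{mainthm} and~\ref{ekelcor}. As in the remark following Theorem~\ref{mainthm}, if moreover $u\in X_{\mathcal H_*}$ then $\T_\eps u=u$ and (b) improves to $\|v-u\|\le\eps$.
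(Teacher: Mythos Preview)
Your proof is correct and follows essentially the same route as the paper's own argument: define $\tilde u=\T_\eps u$, use~\eqref{assumptionpol-00} to pass the inequality $f(\tilde u)<\inf f+\mathcal A\eps^2$, apply Theorem~\ref{deville} at $\tilde u$, and then recover (a) via the inequality~\eqref{diffsymmcontro} and (b) via the triangle inequality. Your additional remarks---that $\mathcal A$ is independent of the starting point and that $u\in X_{\mathcal H_*}$ yields $\T_\eps u=u$---are correct and consistent with the paper, though the latter is not part of the statement being proved.
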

\begin{proof}
	By Theorem~\ref{deville}, there exists a positive number ${\mathcal A}$ with the stated properties.	
	Let $u\in S$ and $\eps\in (0,1)$ such that $f(u)<\inf f+{\mathcal A}\eps^2$.
	If $\T_\eps:S\to S$ is as in Proposition~\ref{mapJvS},
	we set $\tilde u:=\T_\eps u\in S$. By construction we have $\|\tilde u-u^*\|_V<\eps$
	and $f(\tilde u)<\inf f+{\mathcal A}\eps^2$. Hence, by the just stated principle,
	there is $g\in X_\beta$ with $\|g\|_\infty\leq \eps$ and $\|g'\|_\infty\leq \eps$
	and $v\in X$ such that $\|v-\tilde u\|\leq \eps$ and $f(w)+g(w)\geq f(v)+g(v)$, for every $w\in X$.
	Furthermore, we have $\|v-v^*\|_V< (K(C_\Theta+1)+1)\eps$ with the usual argument, as well as
	$\|v-u\|\leq \|v-\tilde u\|+\|\tilde u-u\|\leq \eps+\|\T_\eps u-u\|$. 
	This concludes the proof.
\end{proof}

\subsection{Statements with weights}

\noindent
In this section we will derive a symmetric version of Ekeland's variational
principle with weights (see also \cite{ekelandbook,suzuki,zhong}), based upon the following result due to Zhong (take $x_0=y$
in \cite[Theorem 1.1]{zhong}). The result is often used to prove that
a lower semi-continuous bounded below functional which satisfies a
suitable weighted Palais-Smale condition needs to be coercive.

\begin{theorem}[Zhong's principle]
	\label{zhongth}
Let $X$ be a complete metric space and consider a nondecreasing and continuous function
$h:[0,+\infty)\to [0,+\infty)$ such that
$$
\int_0^{+\infty}\frac{1}{1+h(s)}ds=+\infty.
$$
Assume that $f:X\to\R\cup\{+\infty\}$ is a proper lower 
semi-continuous functional bounded from below. Let $u\in X$, $\rho>0$ and $\sigma>0$ such that
$$
f(u)<\inf_X f+\sigma\rho.
$$ 
Then there exists $v\in X$ such that
\begin{enumerate}
\item[(a)]   $f(v)\leq f(u)$;
\item[(b)]   $d(v,u)\leq r(\rho)$;
\item[(c)]   $f(w)\geq f(v)-\sigma\frac{d(w,v)}{1+h(d(v,u))}$\quad\text{for all $w\in X$,}
\end{enumerate}
where $r(\rho)$ is a positive number which satisfies 
$$
\int_0^{r(\rho)}\frac{1}{1+h(s)}ds\geq \rho.
$$
\end{theorem}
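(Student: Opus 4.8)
\smallskip
\noindent\emph{Proof strategy.} The plan is to imitate the classical iterative scheme behind Ekeland's principle, carrying the weight $1/(1+h(\cdot))$ along at each step. First I would set $\bar r(t):=\int_0^t\frac{ds}{1+h(s)}$; since $h\ge 0$ is continuous and nondecreasing and $\int_0^{+\infty}\frac{ds}{1+h(s)}=+\infty$, this $\bar r$ is a continuous, strictly increasing bijection of $[0,+\infty)$ onto itself, so one may take $r(\rho):=\bar r^{-1}(\rho)$, which realizes $\int_0^{r(\rho)}\frac{ds}{1+h(s)}=\rho$. Then I would build a sequence by setting $u_1:=u$ and, given $u_n$, putting $c_n:=\sigma/(1+h(d(u_n,u)))$, $S_n:=\{w\in X:\ f(w)+c_n\,d(w,u_n)\le f(u_n)\}$ (nonempty, as $u_n\in S_n$), and choosing $u_{n+1}\in S_n$ with $f(u_{n+1})\le\inf_{S_n}f+2^{-n}$. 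From $u_{n+1}\in S_n$, $c_n\,d(u_{n+1},u_n)\le f(u_n)-f(u_{n+1})$, so $(f(u_n))_n$ decreases to some $\ell\ge\inf_X f$.

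The quantitative core is a two-part estimate. Since $h$ is nondecreasing, for each $n$
\[
\bar r(d(u_{n+1},u))-\bar r(d(u_n,u))\le\frac{d(u_{n+1},u_n)}{1+h(d(u_n,u))}\le\frac{1}{\sigma}\bigl(f(u_n)-f(u_{n+1})\bigr),
\]
the first inequality being trivial when $d(u_{n+1},u)<d(u_n,u)$. Telescoping, with $d(u_1,u)=0$ and $f(u)<\inf_X f+\sigma\rho$, gives $\bar r(d(u_n,u))<\rho$, hence $d(u_n,u)<r(\rho)$ for all $n$; so $c_n\ge\sigma/(1+h(r(\rho)))$ and $\sum_n d(u_{n+1},u_n)\le\frac{1+h(r(\rho))}{\sigma}(f(u)-\ell)<+\infty$. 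Thus $(u_n)$ is Cauchy and, by completeness, $u_n\to v$; lower semicontinuity gives $f(v)\le\ell\le f(u)$, which is (a); passing to the limit gives $d(v,u)\le r(\rho)$, which is (b); and continuity of $h$ gives $c_n\to c:=\sigma/(1+h(d(v,u)))$.

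For (c) I would argue by contradiction: if $f(w_0)<f(v)-c\,d(w_0,v)$ for some $w_0$, then $f(w_0)+c_n\,d(w_0,u_n)\to f(w_0)+c\,d(w_0,v)<f(v)\le\ell=\lim_n f(u_n)$, so $f(w_0)+c_n\,d(w_0,u_n)<f(u_n)$ for all large $n$, i.e. $w_0\in S_n$; hence $\ell=\lim_n f(u_{n+1})\le\liminf_n\bigl(\inf_{S_n}f+2^{-n}\bigr)\le f(w_0)<f(v)\le\ell$, which is absurd. I expect (c) to be the genuine obstacle: unlike in Ekeland's proof the sublevel sets $S_n$ need not be nested, since $d(u_n,u)$ and hence $c_n$ need not vary monotonically, so one cannot simply write $v\in\bigcap_n S_n$; the contradiction argument above replaces that, and what makes it work is precisely the integral estimate confining every $d(u_n,u)$ to $[0,r(\rho))$, which keeps $(c_n)$ bounded away from $0$ (so the $u_n$ stay Cauchy) and convergent (so the limiting weight is exactly $c=\sigma/(1+h(d(v,u)))$ as required). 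For $h\equiv 0$ the scheme collapses to the usual proof of Ekeland's principle.
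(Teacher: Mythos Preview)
The paper does not give its own proof of this theorem; it simply quotes it from Zhong's original paper \cite[Theorem~1.1]{zhong} (with the choice $x_0=y$). Your argument is a correct and complete write-up of the standard iterative scheme underlying Zhong's result: the integral estimate $\bar r(d(u_{n+1},u))-\bar r(d(u_n,u))\le\sigma^{-1}(f(u_n)-f(u_{n+1}))$ is precisely the device that confines the sequence to the ball of radius $r(\rho)$ around $u$, and the contradiction argument you give for (c) correctly circumvents the potential non-nestedness of the sets $S_n$. There is nothing to add.
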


\noindent
As a consequence, in the framework of Definition~\ref{abssym},
we obtain the following

\begin{theorem}[Symmetric Zhong's principle]
	\label{symzhong}
Let $X$ be a Banach space and consider a nondecreasing continuous function
$h:[0,+\infty)\to [0,+\infty)$ such that 
$$
\int_0^{+\infty}\frac{1}{1+h(s)}ds=+\infty.
$$
Assume that for $\rho_0>0$ sufficiently small, 
there exists a function $r:[0,\rho_0)\to [0,\infty)$ with
\begin{equation}
	\label{rhocondition}
\int_0^{r(\rho)}\frac{1}{1+h(s)}ds\geq \rho, \,\,\qquad \lim_{\rho\to 0^+} r(\rho)=0.
\end{equation}
Let $f:X\to\R\cup\{+\infty\}$ be a proper lower semi-continuous functional bounded from below
such that condition \eqref{assumptionpol-00} holds. Let $u\in S$, $\rho>0$ and $\sigma>0$ be such that
\begin{equation}
	\label{startingpp}
f(u)<\inf_X f+\sigma\rho
\end{equation}
Then there exists $v\in X$ such that
\begin{enumerate}
\item[(a)]   $\|v-v^*\|_V< (K(C_\Theta+1)+1)r(\rho)$;
\item[(b)]   $f(v)\leq f(u)$;
\item[(c)]   $\|v-u\|\leq r(\rho)+\|\T_{r(\rho)}u-u\|$;
\item[(d)]   $f(w)\geq f(v)-\sigma\frac{\|w-v\|}{1+h(\|v-\T_{r(\rho)}u\|)}$\quad\text{for every $w\in X$.}
\end{enumerate}
\end{theorem}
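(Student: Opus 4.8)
The plan is to reproduce the symmetrization scheme already used in the proofs of Theorems~\ref{mainthm}, \ref{ekelcor4} and \ref{ekelcorV}: replace the given point $u$ by the symmetrized point $\T_{r(\rho)}u$, apply the classical Zhong principle (Theorem~\ref{zhongth}) starting from it, and then upgrade the conclusion to the symmetry estimate (a) by a triangle inequality in $V$ based on the contractivity bound \eqref{contractivvv}.

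First I would fix $u\in S$, $\rho\in(0,\rho_0)$ and $\sigma>0$ satisfying \eqref{startingpp}. Note that $\rho>0$ forces $r(\rho)>0$, since otherwise the left-hand side of the integral inequality in \eqref{rhocondition} would vanish. Hence Proposition~\ref{mapJvS} (applied with the parameter $r(\rho)$) furnishes a continuous map $\T_{r(\rho)}:S\to S$, built via iterated polarizations, with $\|\T_{r(\rho)}u-u^*\|_V<r(\rho)$. Set $\tilde u:=\T_{r(\rho)}u\in S$. Because $\tilde u$ is obtained from $u$ through finitely many polarizations, assumption \eqref{assumptionpol-00} gives $f(\tilde u)\leq f(u)<\inf_X f+\sigma\rho$; moreover $u^*=\tilde u^*$ by property~(3) of Definition~\ref{abssym}.

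Next I would invoke Theorem~\ref{zhongth} on the complete metric space $(X,\|\cdot\|)$, with the nondecreasing continuous function $h$, with starting point $\tilde u$ in place of $u$, and with the same $\rho,\sigma$: the first relation in \eqref{rhocondition} is precisely the statement that the number $r(\rho)$ is admissible as the radius ``$r(\rho)$'' appearing in the conclusion of Theorem~\ref{zhongth}. This produces $v\in X$ with $f(v)\leq f(\tilde u)\leq f(u)$, with $\|v-\tilde u\|\leq r(\rho)$, and with
\[
f(w)\geq f(v)-\sigma\,\frac{\|w-v\|}{1+h(\|v-\tilde u\|)}\qquad\text{for all }w\in X .
\]
Recalling $\tilde u=\T_{r(\rho)}u$, these are exactly conclusions (b) and (d), while (c) follows at once from $\|v-u\|\leq\|v-\tilde u\|+\|\tilde u-u\|\leq r(\rho)+\|\T_{r(\rho)}u-u\|$.

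Finally, for (a) I would argue as in \eqref{diffsymmcontro}: using $u^*=\tilde u^*$, the second inequality of \eqref{contractivvv}, and the embedding constant $K$ of $X\hookrightarrow V$,
\[
\|v-v^*\|_V\leq\|v-\tilde u\|_V+\|\tilde u-u^*\|_V+\|u^*-v^*\|_V\leq K(C_\Theta+1)\|v-\tilde u\|+\|\tilde u-u^*\|_V<(K(C_\Theta+1)+1)\,r(\rho),
\]
which is conclusion (a). The whole argument is routine; the only point deserving a bit of care is the bookkeeping of the radius — one must feed the value $r(\rho)$, not $\rho$ itself, both into Proposition~\ref{mapJvS} and into Theorem~\ref{zhongth} — together with the verification that hypothesis \eqref{rhocondition} is exactly what makes these two uses compatible. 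No genuine obstacle is expected beyond this matching.
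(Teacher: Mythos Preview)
Your proposal is correct and follows essentially the same approach as the paper's own proof: replace $u$ by $\tilde u=\T_{r(\rho)}u$, apply Zhong's principle (Theorem~\ref{zhongth}) at $\tilde u$ with radius $r(\rho)$, and recover (a)--(d) via the triangle inequality and \eqref{contractivvv}. Your write-up even adds a couple of clarifying observations (e.g.\ $r(\rho)>0$ and $u^*=\tilde u^*$) that the paper leaves implicit.
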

\begin{proof}
Let $u\in S$, $\rho>0$ and $\sigma>0$ with $f(u)< \inf f+\sigma\rho$. Let also $r(\rho)$ be a positive number
which satisfied conditions~\eqref{rhocondition}. Then, if $\T_{r(\rho)}:S\to S$ is the map of 
Proposition \ref{mapJvS}, let $\tilde u:=\T_{r(\rho)} u\in S$.
Then $\|\tilde u-u^*\|_V<r(\rho)$ and, taking into account \eqref{assumptionpol-00}, we can conclude
$f(\tilde u)< \inf f+\sigma\rho$. By applying Theorem~\ref{zhongth} to this element $\tilde u$, we find an element
$v\in X$ such that $\|v-\tilde u\|\leq r(\rho)$, $f(v)\leq f(\tilde u)\leq f(u)$ and
$$
f(w)\geq f(v)-\sigma\frac{\|w-v\|}{1+h(\|v-\T_{r(\rho)} u\|)},\quad\,\,\,\text{for every $w\in X$}. 
$$
Also, we have
$$
\|v-u\|\leq \|v-\tilde u\|+\|\T_{r(\rho)}-u\|\leq r(\rho)+\|\T_{r(\rho)}-u\|.
$$
We conclude with $\|v-v^*\|_V\leq K(C_\Theta+1)\|v-\tilde u\|
+\|\tilde u-u^*\|_V<(K(C_\Theta+1)+1)r(\rho)$.
\end{proof}

\begin{remark}\rm
In the case $h\equiv 0$, one finds precisely the symmetric version of the classical
Ekeland's variational principle (notice that one can take $r(\rho)=\rho$). In the Cerami
case $h(s)=s$ \cite{cerami}, one can take $r(\rho)=e^\rho-1$ and the conclusion of Theorem~\ref{symzhong}
reads as: for every $u\in S$ which satisfies~\eqref{startingpp} with $\rho>0$ and $\sigma>0$ 
there exists $v\in X$ such that
\begin{enumerate}
\item[(a)]   $\|v-v^*\|_V < (K(C_\Theta+1)+1)(e^\rho-1)$;
\item[(b)]   $f(v)\leq f(u)$;
\item[(c)]   $\|v-u\|\leq e^\rho-1+\|\T_{e^\rho-1}u-u\|$;
\item[(d)]   $f(w)\geq f(v)-\sigma\frac{\|w-v\|}{1+\|v-\T_{e^\rho-1}u\|}$,\quad\text{for all $w\in X$.}
\end{enumerate}
Furthermore, if $u\in X_{{{\mathcal H}_*}}$ and  $\rho=\sigma>0$, then 
there exists $v\in X$ such that
\begin{enumerate}
\item[(a)]   $\|v-v^*\|_V < (K(C_\Theta+1)+1)(e^\rho-1)$;
\item[(b)]   $f(v)\leq f(u)$;
\item[(c)]   $\|v-u\|\leq e^\rho-1$;
\item[(d)]   $f(w)\geq f(v)-\rho\frac{\|w-v\|}{1+\|v-u\|}$,\quad\text{for all $w\in X$.}
\end{enumerate}
\end{remark}

\noindent
Next, we will highlight some by-products of the previous principles
in the context of non-smooth critical point theory.
We recall the definition of weak slope \cite{dm}. $B(u,\delta)$ stands
for the open ball in $X$ of center $u$ and radius $\delta$ and 
${\rm epi}(f)=\{(u,\lambda)\in X\times\R: f(u)\leq\lambda\}$.

\begin{definition}\label{defslope}
For every $u\in X$ with $f(u)\in\R$, we denote by $|df|(u)$ the supremum
of $\sigma$'s in $[0,\infty)$ such that there exist $\delta>0$ and a continuous map
$$
{\mathcal H}:\,B((u,f(u)), \delta)\cap {\rm epi}(f) \times[ 0, \delta]  \to X,
$$
satisfying, for all $(\xi,\mu)\in B((u,f(u)), \delta)\cap {\rm epi}(f)$
and $t\in [0,\delta]$,
\begin{equation*}
\|{\mathcal H}((\xi,\mu),t)-\xi\| \leq t,\qquad
f({\mathcal H}((\xi,\mu),t)) \leq f(\xi)-\sigma t.
\end{equation*}
The extended real number $|df|(u)$ is called the weak slope of $f$ at $u$.
\end{definition}

\begin{remark}\rm
	\label{weakstrong}
If $f$ is of class $C^1$, then $|df|(u)=\|df(u)\|$, see~\cite[Corollary 2.12]{dm}. If $u\in X$ with $f(u)<+\infty$
the strong slope of $f$ at $u$ \cite{degiorgimarinotosq} is the extended real $|\nabla f|(u)$,
$$
|\nabla f|(u):=
\begin{cases}
\displaystyle\limsup_{\xi \to u}\frac{f(u)-f(\xi)}{d(u,\xi)} & \text{if $u$ is not a local minimum for $f$;}  \\	
0  & \text{if $u$ is a local minimum for $f$}.
\end{cases}
$$
It easily follows from the definition that $|df|(u)\leq |\nabla f|(u)$.
\end{remark}

\noindent
We can now state the following 

\begin{corollary}
Let $X$ be a Banach space and $h:[0,+\infty)\to [0,+\infty)$ a nondecreasing and continuous function 
such that 
$$
\int_0^{+\infty}\frac{1}{1+h(s)}ds=+\infty.
$$
Assume that for $\rho_0>0$ sufficiently small, 
there exists a function $r:[0,\rho_0)\to [0,\infty)$ with
$$
\int_0^{r(\rho)}\frac{1}{1+h(s)}ds\geq \rho, \,\,\qquad \lim_{\rho\to 0^+} r(\rho)=0.
$$
Let $f:X\to\R\cup\{+\infty\}$ be a proper lower semi-continuous functional bounded from below
such that \eqref{assumptionpol-00} holds. Then for every $\rho>0$ and $u_\rho\in S$ with 
$$
f(u_\rho)<\inf_X f+\rho^2
$$ 
there exists $v_\rho\in X$ such that
\begin{enumerate}
\item[(a)]   $\|v_\rho-v^*_\rho\|_V < (K(C_\Theta+1)+1)r(\rho)$;
\item[(b)]   $(1+h(\|v_\rho-\T_{r(\rho)}u_\rho\|))|df|(v_\rho)\leq\rho,$\quad\text{for all $w\in X$.}
\end{enumerate}
In particular, for every minimizing sequence $(u_j)\subset S$ for $f$, there exists a 
minimizing sequence $(v_j)\subset X$ and $(\mu_j)\subset\R^+$ with $\mu_j\to 0$ such that
$$
\lim_{j\to\infty}\|v_j-v^*_j\|_V=0,\qquad
\lim_{j\to\infty}(1+h(\|v_j-\T_{\mu_j}u_j\|))|df|(v_j)=0.
$$
Moreover, for every symmetric minimizing sequence $(u_j)\subset X_{{{\mathcal H}_*}}$ for $f$,
there exists a minimizing sequence $(v_j)\subset X$ such that
$$
\lim_{j\to\infty}\|v_j-v^*_j\|_V=0,\qquad
\lim_{j\to\infty}(1+h(\|v_j-u_j\|))|df|(v_j)=0.
$$
\end{corollary}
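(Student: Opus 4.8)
The plan is to derive everything from the symmetric Zhong principle (Theorem~\ref{symzhong}), used with the choice $\sigma=\rho$, and then to convert its last conclusion into a weak slope estimate by passing through the strong slope.

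First I would fix $\rho>0$ (small enough that $r(\rho)$ is defined) and $u_\rho\in S$ with $f(u_\rho)<\inf f+\rho^2$, and apply Theorem~\ref{symzhong} with $\sigma:=\rho$; since then $f(u_\rho)<\inf f+\sigma\rho$, it furnishes $v_\rho\in X$ with $f(v_\rho)\le f(u_\rho)$, with $\|v_\rho-v_\rho^*\|_V<(K(C_\Theta+1)+1)r(\rho)$ (this is exactly~(a)), and with
\[
f(w)\ge f(v_\rho)-\rho\,\frac{\|w-v_\rho\|}{1+h(\|v_\rho-\T_{r(\rho)}u_\rho\|)}\qquad\text{for all }w\in X .
\]
Writing $\lambda_\rho:=1+h(\|v_\rho-\T_{r(\rho)}u_\rho\|)\ge 1$, this gives $\frac{f(v_\rho)-f(w)}{\|w-v_\rho\|}\le\frac{\rho}{\lambda_\rho}$ for every $w\ne v_\rho$, whence, taking the limit superior as $w\to v_\rho$ (and recalling that the strong slope is $0$ by convention when $v_\rho$ is a local minimum), $|\nabla f|(v_\rho)\le\rho/\lambda_\rho$. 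Since $|df|(v_\rho)\le|\nabla f|(v_\rho)$ by Remark~\ref{weakstrong}, multiplying by $\lambda_\rho>0$ yields $\lambda_\rho\,|df|(v_\rho)\le\rho$, i.e.\ conclusion~(b).

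For the statement on minimizing sequences, given $(u_j)\subset S$ with $f(u_j)\to\inf f$, I would set $\eps_j:=f(u_j)-\inf f\ge 0$ and $\rho_j:=\sqrt{\eps_j}+1/j$, so that $\rho_j\to 0^+$, $f(u_j)<\inf f+\rho_j^2$, and $\rho_j$ lies in the domain of $r$ for all $j$ large (the finitely many remaining indices are irrelevant for the limits). Applying the first part with $\rho=\rho_j$ and $u_\rho=u_j$ produces $v_j\in X$; set $\mu_j:=r(\rho_j)\in\R^+$, which tends to $0$ because $\rho_j\to 0^+$ and $\lim_{\rho\to 0^+}r(\rho)=0$. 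Then~(a) gives $\|v_j-v_j^*\|_V<(K(C_\Theta+1)+1)r(\rho_j)\to 0$; the chain $\inf f\le f(v_j)\le f(u_j)\to\inf f$ shows $(v_j)$ is minimizing; and~(b) gives $(1+h(\|v_j-\T_{\mu_j}u_j\|))|df|(v_j)\le\rho_j\to 0$. If moreover $(u_j)\subset X_{{{\mathcal H}_*}}$, then $u_j^H=u_j$ for every $H\in{\mathcal H}_*$, so each of the iterated polarizations defining $\T_{\mu_j}$ leaves $u_j$ fixed and $\T_{\mu_j}u_j=u_j$; the previous limit then reads $(1+h(\|v_j-u_j\|))|df|(v_j)\to 0$, which is the last assertion.

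Every step reduces to invoking Theorem~\ref{symzhong} together with the elementary comparison $|df|\le|\nabla f|$, so I do not expect a genuine obstacle. The only points that need a little care are the treatment of the local-minimum case in the slope estimate (handled by the convention in Remark~\ref{weakstrong}) and the selection of the auxiliary parameters $\rho_j$ with $\rho_j^2>\eps_j$ and $\rho_j\to 0^+$, which is possible precisely because $\eps_j\to 0$.
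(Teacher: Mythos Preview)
Your proposal is correct and follows exactly the route the paper intends: the paper's proof consists of the single sentence ``Taking into account Remark~\ref{weakstrong}, it is an easy consequence of Theorem~\ref{symzhong},'' and you have simply unpacked this, applying Theorem~\ref{symzhong} with $\sigma=\rho$ and passing from the variational inequality to the weak slope via $|df|\le|\nabla f|$. Your handling of the sequence statements (choice of $\rho_j$, $\mu_j=r(\rho_j)$, and the observation $\T_{\mu_j}u_j=u_j$ on $X_{{\mathcal H}_*}$) is a clean way to fill in the details the paper leaves implicit.
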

\begin{proof}
Taking into account Remark~\ref{weakstrong}, it is an easy consequence of Theorem~\ref{symzhong}.
\end{proof}

\subsection{Statements with constraints} 
A symmetric version of Ekeland's principle {\em with constraints}, 
in the spirit of \cite[Theorem 3.1]{ekeland1}, can also be formulated. 
Assume that $G_j:X\to\R$ with $1\leq j\leq m$ are $C^1$ functions, 
let $1\leq p\leq m$ and consider the set
\begin{equation*}
{\mathscr C}=\{u\in X: \text{$G_j(u)=0$ for $1\leq j\leq p$ and $G_j(u)\geq 0$ for $p+1\leq j\leq m$}\}.
\end{equation*}
For all $u$ in ${\mathscr C}$, we denote by ${\mathscr I}(u)$ the index set of saturated constraints (cf.~\cite{ekeland1}), namely
$j\in {\mathscr I}(u)$ if and only if $G_j(u)=0$. We consider the following assumptions.
\begin{equation}
	\label{basicregularr}
\text{$f:X\to\R$ is Fr\'echet differentiable,
\,\,\qquad  $-\infty<\inf\limits_{{\mathscr C}}f<+\infty$;}
\end{equation}
\begin{equation}
	\label{inssSS}
\text{for all $u\in {\mathscr C}$ there exists $\xi\in {\mathscr C}\cap S$ such that $f(\xi)\leq f(u)$;}
\end{equation}
\begin{equation}
		\label{linearindip}
\text{for all $u\in {\mathscr C},$ the elements $\{dG_j(u)\}_{j\in {\mathscr I}(u)}$ are linearly independent in $X'$;} 
\end{equation}
\begin{equation}
	\label{simmfunctconstra}
\begin{cases}
\forall u\in {\mathscr C}\cap S,\,\,\,\forall H\in {\mathcal H}_*:\quad u^H\in {\mathscr C}, &\\
\noalign{\vskip2.1pt}
\forall u\in {\mathscr C}\cap S,\,\,\,\forall H\in {\mathcal H}_*:\quad f(u^H)\leq f(u).
\end{cases}
\end{equation}
Then, for every $\eps>0$, there exists $u_\eps\in {\mathscr C}$ such that
$$
f(u_\eps)\leq \inf_{{\mathscr C}}f+\eps^2,
\quad
\|u_\eps-u_\eps^*\|_V <C\eps,
\quad
\Big\|df(u_\eps)-\sum_{j=1}^m\lambda_j dG_j(u_\eps)\Big\|_{X'}\leq \eps,
$$	
for some $\lambda_j\in\R$, $1\leq j\leq m$, such that $\lambda_j\geq 0$ 
for $p+1\leq j\leq m$ and $\lambda_j=0$ if $G_j(u_\eps)\neq 0$. 
The assertion follows by applying Theorem~\ref{ekelcorII} 
to the functional $\hat f:X\to\R\cup\{+\infty\}$
\begin{equation*}
\hat f(u):=
\begin{cases}
f(u) & \text{for $u\in {\mathscr C}$} \\
+\infty & \text{for $u\in X\setminus{\mathscr C}$},
\end{cases}
\end{equation*}
finding almost symmetric point $u_\eps\in {\mathscr C}$ such that
$f(u_\eps)\leq \inf f|_{{\mathscr C}}+\eps^2$ and
$$
\forall w\in {\mathscr C}:\quad\, f(w)\geq f(u_\eps)-\eps\|w-u_\eps\|,
$$
and then arguing exactly as in the proof of \cite[Theorem 3.1]{ekeland1}, namely using
\cite[Lemmas 3.2 and 3.3]{ekeland1}, in view of assumptions~\eqref{basicregularr}-\eqref{linearindip}.
The assumptions of Theorem~\ref{ekelcorII} are fulfilled since $\hat f$ is lower semi-continuous, bounded from below being $f$
bounded from below on ${\mathscr C}$ and, in light of \eqref{simmfunctconstra}, 
it satisfies $\hat f(u^H)\leq \hat f(u)$, for every $u\in S$ and $H\in {\mathcal H}_*$. 
Also, by virtue of \eqref{inssSS}, for all $u\in \dom{\hat f}$ there exists $\xi\in S$ such that $\hat f(\xi)\leq \hat f(u)$.
In the case of a single constraint, namely $m=1$,  then assumption~\eqref{linearindip} 
reads as: $G(u)=0$ implies $dG(u)\neq 0$. On the concrete side,
\eqref{simmfunctconstra} is satisfied in various situations, meaningful in the calculus of variations, such as
$G:W^{1,p}(\R^N)\to\R$, $G(u)=\int_{\R^N}H(|u|)-1$ for suitable $H\in C^1(\R)$
and functionals $f:W^{1,p}(\R^N)\to\R\cup\{+\infty\}$ discussed in Section~\ref{sectionPDEs}.

\subsection{Symmetry, coercivity and PS conditions}
	\label{remsymcoercps}
A sequence $(u_h)\subset X$ is said to be a Palais-Smale ($(PS)$, in short) sequence for $f\in C^1(X)$ if 
$(f(u_h))$  is bounded and $\|df(u_h)\|_{X'}\to 0$, as $h\to\infty$. Also, we say that 
$f$ satisfies the $(PS)$ condition, if each $(PS)$ sequence admits a converging subsequence.
If $f$ is bounded from below and satisfies the $(PS)$ condition, then it is 
coercive \cite{cakliwill,corvel}, meaning that 
$$
\liminf_{\|u\|\to +\infty} f(u)=+\infty.
$$
Actually, an even more general property holds and it is sufficient to assume that the $(PSB)$ condition holds, namely each 
$(u_h)\subset X$ with $(f(u_h))$ bounded and $\|df(u_h)\|_{X'}\to 0$, 
is bounded (see \cite[Corollary 1]{corvel}, for details). 
As pointed out in \cite[Section 10]{mawhwill}, a typical argument to prove the above conclusion is based upon
a clever application of Ekeland's principle, after observing that a violation of the coercivity
yields $\ell\in\R$, $\ell\geq\inf f$ and a sequence $(u_h)\subset X$ such that $f(u_h)\leq \ell+\gamma_h$ and $\|u_h\|\geq h$,
where $(\gamma_h)\subset\R^+$ is a given sequence with $\gamma_h\to 0$ as $h\to\infty$.
Notice that $\ell+\gamma_h-\inf f>0$, for all $h\in\N$. 
Let $\sigma_h>0$ with $\sigma_h\to 0$ as $h\to\infty$ and
$\rho_h=\frac{\ell+\gamma_h-\inf f}{\sigma_h}>0$ be such that $\rho_h\leq h/2$, yielding
\begin{equation*}
f(u_h)\leq \inf f+\sigma_h\rho_h,\quad\, h\in\N.
\end{equation*}
Under reasonable assumptions, we can also have 
$(u_h)\subset S$. At this stage, if \eqref{assumptionpol-00} holds,
\begin{equation*}
f(\T_{\rho_h}u_h)\leq f(u_h)\leq \inf f+\sigma_h\rho_h,\qquad \|\T_{\rho_h}u_h-u_h^*\|_V<\rho_h. 
\end{equation*}
Then, Ekeland's principle yields $(v_h)\subset X$ with $f(v_h)\leq f(u_h)\leq \ell+\gamma_h,$ 
$\|df(v_h)\|_{X'}\leq \sigma_h$ and $\|v_h-\T_{\rho_h}u_h\|\leq\rho_h$, implying $\|v_h-v_h^*\|_V<C\rho_h$.
Notice that, assuming $\|u^H\|=\|u\|$ for all $u\in S$ and $H\in {\mathcal H}_*$, 
which is reasonable for applications to PDEs, there holds
$\|v_h\|\geq \|\T_{\rho_h}u_h\|-\rho_h=\|u_h\|-\rho_h\geq h/2$,
yielding $\|v_h\|\to\infty$, as $h\to\infty$. In particular, it follows $f(v_h)\to\ell$, since
$$
\ell=\liminf_{\|u\|\to +\infty} f(u)\leq \liminf_h f(v_h)\leq \limsup_h f(v_h)\leq\lim_h \,(\ell+\gamma_h)=\ell.
$$
Since $\sigma_h\to 0$, $(v_h)$
in an unbounded Palais-Smale sequence, contradicting the $(PSB)$ condition.  
To guarantee that, in addition, $\|v_h-v_h^*\|_V\to 0$, one would need that $\rho_h\to 0$. On the other hand   
$\rho_h,\sigma_h,\gamma_h\to 0$, by $\sigma_h\rho_h=\ell+\gamma_h-\inf f$,
yields $\ell=\inf f$, which is not the case, in general. 
In conclusion, this argument does not seem to allow
obtaining a true unbounded {\em almost symmetric Palais-Smale sequence}, which would of course considerably improve the statement 
on coercivity, replacing $(PSB)$ with some symmetric version of it
involving Palais-Smale sequences $(u_h)$ with $\|u_h-u_h^*\|_V\to 0$. 
If $f$ is bounded from below, \eqref{assumptionpol-00} holds, and it satisfies 
the symmetric $(PSB)$ condition, then
$$
\liminf_{\|u\|\to +\infty} f(u)>\inf_X f.
$$
It is sufficient to argue by contradiction and let $\ell=\inf f$ in the previous proof, allowing
$\rho_h,\sigma_h,\gamma_h\to 0$. The relationships between 
the symmetry of the functional, its coercivity and Palais-Smale 
conditions of some kind would deserve further attention.

\subsection{Symmetric quasi-convex PS sequences}

To the author's knowledge, the next notion was firstly introduced by Bartsch and 
Degiovanni in \cite{bartschdeg}.

\begin{definition}
Let $X$ be a Banach space, let $f:X\to\R$ be a lower semi-continuous functional 
and $u\in X$. We define the functional $Q_u:X\to\bar\R$ by setting
$$
Q_u(w):=\limsup_{\substack{z\to u \\ \zeta\to w \\ t\to 0}}\frac{f(z+t\zeta)+f(z-t\zeta)-2f(z)}{t^2},
\quad\,\,\text{for every $w\in X$}.
$$
\end{definition}

\noindent
In the framework of Definition~\ref{abssym}, we also introduce the following

\begin{definition}
Let $X$ be a Banach space and let $f:X\to\R$ be a lower semi-continuous functional.
We say that $(u_h)\subset X$ is a {\em symmetric quasi-convex Palais-Smale sequence} 
at level $c\in\R$ ($(SQPS)_c$-sequence, in short) if 
\begin{equation*}
\lim_{h\to\infty}f(u_h)=c,\qquad
\lim_{h\to\infty}|df|(u_h)=0,
\end{equation*}
and, in addition,
\begin{equation}
	\label{addproperty}
\lim_{h\to\infty}\|u_h-u^*_h\|_V=0,\qquad
\liminf_{h\to\infty} Q_{u_h}(w)\geq 0,\,\,\,\,\forall w\in X.
\end{equation}
We say that $f$ satisfies the {\em symmetric 
quasi-convex Palais-Smale condition} at level $c$, $(SQPS)_c$,
in short, if every $(SQPS)_c$-sequence which admits a subsequence 
strongly converging in $W$, up to a subsequence, converges strongly in $X$.
\end{definition}

\noindent
Compared to a standard Palais-Smale sequence, two additional information are involved
on $(u_h)$, a quasi-symmetry and a quasi-convexity condition.

\begin{remark}\rm
As pointed out in \cite{lionscmp,fanghou}, the fact that a Palais-Smale sequence $(u_h)\subset X$
for a functional $f:X\to\R$ of class $C^2$ satisfies the additional second order condition
\begin{equation*}
\liminf_{h\to\infty} \langle f''(u_h)w,w\rangle\geq 0,\quad\text{for all $w\in X$},
\end{equation*}
can sometimes be crucial for the proof of the strong convergence of $(u_h)$ itself to some limit point
$u\in X$. Furthermore, the additional symmetry condition $\|u_h-u^*_h\|_V\to 0$, as $h\to\infty$,
usually provides compactifying effects (see e.g.~\cite[Section 4.2]{vansch}). 
Based upon these considerations, it is quite clear that, in some sense, 
the $(SQPS)_c$-condition is much weaker than the standard Palais-Smale condition. 
Of course $Q_u(w)=\langle f''(u)w,w\rangle$ when $f$ is of class $C^2$ and replacing 
$\langle f''(u)w,w\rangle$ with $Q_u(w)$ appears to be a natural extension 
when the function is not $C^2$ smooth.
\end{remark}

\vskip2pt
\noindent
Let now $X$ be a Hilbert space and consider the following assumptions:
\begin{align}
	\label{assumptionpol}
& \text{$f(u^H)\leq f(u)$ for all $u\in S$ and $H\in {\mathcal H}_*$};  \\
	\label{seq1}
& \text{for all $X$ there exists $\xi\in S$ such that $f(\xi)\leq f(u)$}; \\
\label{seq2}
& \text{if $(u_h)\subset X$ is bounded, then $(\xi_h)\subset S$ is bounded}; \\
	\label{seq3}
& \text{$\|u^H\|\leq \|u\|$ for all $u\in S$ and $H\in {\mathcal H}_*$}; \\
	\label{seq4}
& \text{$f$ admits a bounded minimizing sequence}.
\end{align}

\noindent
Notice that assumptions \eqref{seq1}-\eqref{seq3} are satisfied in many typical concrete 
situations, like when $X$ is a Sobolev space $W^{1,p}_0(\Omega)$, $\Omega$ a ball or $\R^N$, $S$ is the cone of its positive
functions and the functional satisfies $f(|u|)\leq f(u)$, for all $u\in X$. Assumption~\eqref{seq4}
is mild but not automatically satisfied of course; for instance all the minimizing sequences for the exponential
function on $\R$ are unbounded.
\noindent
We can now state the following

\begin{theorem}
		\label{maincor}
Assume that $f:X\to\R$ is a lower semi-continuous functional bounded from below such 
that conditions \eqref{assumptionpol}-\eqref{seq4} hold. Then $f$ admits a $(SQPS)_{\inf f}$-sequence.
\end{theorem}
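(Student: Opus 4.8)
The plan is to start from the bounded minimizing sequence provided by assumption~\eqref{seq4}, replace each of its terms by a point in $S$ using~\eqref{seq1}, and then feed each of these points into a variational principle that delivers simultaneously a weak-slope estimate, an approximate symmetry estimate, and the second-order quasi-convexity information encoded by $Q_u$. The weak slope and the symmetry are already available from the machinery of Section~\ref{varprinciples} (specifically the symmetric Ekeland principle, Theorem~\ref{ekelcorII}, applied with $\sigma\to 0$), so the genuinely new ingredient is the $Q$-information. This is precisely the place where the Borwein–Preiss mechanism enters: in a Hilbert space one applies the smooth principle of Theorem~\ref{BPprinc} (or rather its symmetric counterpart, Theorem~\ref{mainthm}) with exponent $p=2$, so that the perturbation $\sigma\|w-\eta\|^2$ is a nice quadratic function whose second difference is controlled; minimizing $f(w)+\sigma\|w-\eta\|^2$ at $v$ forces $Q_v(w)\geq -2\sigma\|w\|^2$ for all $w$, which is the correct pointwise lower bound for the quadratic form.

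First I would fix $h\in\N$ and pick, using~\eqref{seq4}, a term $z_h$ of a bounded minimizing sequence, so $f(z_h)<\inf f+\delta_h$ with $\delta_h\to 0$ and $\sup_h\|z_h\|<\infty$; then by~\eqref{seq1} choose $\xi_h\in S$ with $f(\xi_h)\le f(z_h)$, which by~\eqref{seq2} keeps $(\xi_h)$ bounded, and still $f(\xi_h)<\inf f+\delta_h$. Second, choose $\sigma_h\to 0$ and $\rho_h>0$ with $\sigma_h\rho_h^2>\delta_h$, and apply Theorem~\ref{mainthm} to $f$ with $p=2$, starting point $\xi_h\in S$, parameters $\sigma_h$ and a radius $\rho_h$ chosen small enough (and, using boundedness of $(\xi_h)$ together with $\rho_h\to 0$, so that $\|\T_{\rho_h}\xi_h-\xi_h\|$ is controlled): this produces $v_h,\eta_h\in X$ with $\|v_h-v_h^*\|_V<(K(C_\Theta+1)+1)\rho_h\to 0$, with $f(v_h)<\inf f+\sigma_h\rho_h^2\to\inf f$, and with $f(w)\ge f(v_h)+\sigma_h(\|v_h-\eta_h\|^2-\|w-\eta_h\|^2)$ for all $w$. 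Third, from this last inequality extract the two desired limits: on the one hand it gives $f(w)\ge f(v_h)-2\sigma_h\|v_h-\eta_h\|\,\|w-v_h\|-\sigma_h\|w-v_h\|^2$, hence (after checking $\|v_h-\eta_h\|$ stays bounded, which follows from $\|v_h-u\|<\rho+\|\T_\rho u-u\|$ and $\|\eta_h-u\|\le\rho+\|\T_\rho u-u\|$ in Theorem~\ref{mainthm}) one reads off $|\nabla f|(v_h)\to 0$, and since $|df|(v_h)\le|\nabla f|(v_h)$ by Remark~\ref{weakstrong}, also $|df|(v_h)\to 0$; on the other hand, plugging $w=z+t\zeta$ and $w=z-t\zeta$ into the inequality, summing, subtracting $2f(z)$, dividing by $t^2$ and passing to the limsup as $z\to v_h$, $\zeta\to w$, $t\to 0$ (using that the quadratic $w\mapsto\sigma_h\|w-\eta_h\|^2$ has second difference exactly $2\sigma_h t^2\|\zeta\|^2$) yields $Q_{v_h}(w)\ge -2\sigma_h\|w\|^2$, so $\liminf_h Q_{v_h}(w)\ge 0$ for every fixed $w\in X$. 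Together with $\|v_h-v_h^*\|_V\to 0$ this is exactly the $(SQPS)_{\inf f}$ property in~\eqref{addproperty}.

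The main obstacle I anticipate is the bookkeeping that keeps every quantity under control as $h\to\infty$: one must choose $\rho_h,\sigma_h$ (and the $\delta_h$) in a compatible way so that $\rho_h\to 0$ (needed for the symmetry), $\sigma_h\rho_h^2>\delta_h$ (needed to apply the principle) and $\sigma_h\to 0$ (needed for the slope and the $Q$-bound) hold simultaneously — this is possible because $\delta_h\to 0$, e.g.\ $\delta_h=h^{-3}$, $\sigma_h=h^{-1}$, $\rho_h=h^{-2/3}$ won't do, but $\delta_h=h^{-2}$, $\sigma_h=\rho_h=h^{-1/2}$ does after rescaling, and in any case one can always succeed since only $\delta_h\to 0$ is given. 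One must also verify that $\|v_h-\eta_h\|$ and $\|\T_{\rho_h}\xi_h-\xi_h\|$ remain bounded; the first is immediate from (b)--(c) of Theorem~\ref{mainthm} and the second needs only $\rho_h\to 0$ plus the fact that $\T_{\rho}\xi_h$ is built by iterated polarizations which, by property~(5) and~\eqref{contractivvv}, do not blow up norms. The only remaining subtlety is justifying the exchange of the limsup defining $Q_{v_h}$ with the inequality from the principle; but since the perturbation term is an explicit smooth quadratic, its second difference is computed exactly rather than estimated, so the passage to the limit is elementary. Hence $f$ admits a $(SQPS)_{\inf f}$-sequence, as claimed.
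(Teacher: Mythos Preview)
Your overall strategy coincides with the paper's: feed the bounded sequence $(\xi_h)\subset S$ into the symmetric Borwein--Preiss principle (Theorem~\ref{mainthm}) with $p=2$, then extract the approximate symmetry from (a), the slope from (e) via $|df|\le|\nabla f|$, and the $Q$-bound from (e) together with the Hilbert parallelogram identity. Two details in the execution, however, do not go through as written.

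The step for $Q_{v_h}$ fails if you plug a general $z$. Writing $g(w):=f(w)+\sigma_h\|w-\eta_h\|^2$, summing the inequality (e) at $w=z\pm t\zeta$ and using the parallelogram law gives, after subtracting $2f(z)$,
\[
f(z+t\zeta)+f(z-t\zeta)-2f(z)\ \ge\ 2\bigl(g(v_h)-g(z)\bigr)-2\sigma_h t^2\|\zeta\|^2,
\]
and by the very minimality in (e) one has $g(v_h)-g(z)\le 0$; divided by $t^2$ this term can tend to $-\infty$, so no useful lower bound on the quotient results. The remedy---and what the paper does---is to restrict to $z=v_h$: then the extra term vanishes identically, one obtains $f(v_h+t\zeta)+f(v_h-t\zeta)-2f(v_h)\ge-2\sigma_h t^2\|\zeta\|^2$, and since the full $\limsup$ defining $Q_{v_h}(w)$ dominates the $\limsup$ over the subfamily with $z\equiv v_h$, the bound $Q_{v_h}(w)\ge-2\sigma_h\|w\|^2$ follows. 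Secondly, to bound $\|\T_{\rho_h}\xi_h-\xi_h\|$ in the $X$-norm (needed to control $\|v_h-\eta_h\|$ and hence the slope) you invoke property~(5) of Definition~\ref{abssym} and \eqref{contractivvv}, but those concern $\|\cdot\|_V$, not $\|\cdot\|$. The correct tool is assumption~\eqref{seq3}: iterating $\|u^H\|\le\|u\|$ yields $\|\T_{\rho_h}\xi_h\|\le\|\xi_h\|$, hence $\|\T_{\rho_h}\xi_h-\xi_h\|\le 2\|\xi_h\|\le C$, and then $|df|(v_h)\le|\nabla f|(v_h)\le 2\sigma_h\|v_h-\eta_h\|\le C\sigma_h\to 0$ as you intended.
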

\begin{proof}
	In the course of the proof $C$ will denote a generic 
	constant that might change from line to line. By means of assumption~\eqref{seq4}, we can find 
	a bounded minimizing sequence $(u_h)\subset X$ for $f$, namely
	there exists a sequence $(\eps_h)\subset\R^+$, with $\eps_h\to 0$ as $h\to\infty$, such that
	$\|u_h\|\leq C$ and $f(u_h)<\inf f+\eps_h^3$, for all $h\in\N$. 
	In light of assumptions \eqref{seq1}-\eqref{seq2}, there exists a sequence $(\xi_h)\subset S$ such that 
	$\|\xi_h\|\leq C$ and $f(\xi_h)<\inf f+\eps_h^3$, for all $h\in\N$. Taking into account that any norm $\|\cdot\|$ on $X$
	is a Kadec norm and that assumption~\eqref{assumptionpol} holds,
	by Theorem~\ref{mainthm} (symmetric Borwein-Preiss's principle) with $p=2$, $\sigma_h=\rho_h=\eps_h$, we find two sequences
	$(v_h)\subset X$ and $(\eta_h)\subset X$ such that $\|v_h-v_h^*\|_V<C\eps_h$, $f(v_h)<\inf f+\eps_h^3$ as well as
	\begin{align}
		\label{2controllT}
&	\|v_h-\xi_h\|< \eps_h+\|\T_{\eps_h}\xi_h-\xi_h\|,\quad
	\|\eta_h-\xi_h\|\leq\eps_h+\|\T_{\eps_h}\xi_h-\xi_h\|, \\
		\label{varrel}
&	f(w)\geq f(v_h)+\eps_h (\|v_h-\eta_h\|^2-\|w-\eta_h\|^2),\quad\text{for all $w\in X$.}
	\end{align}
	Fixed any $\zeta\in X$ and $t\in\R$, substituting $w:=v_h+t\zeta$ and $w:=v_h-t\zeta$ into~\eqref{varrel} yields
	\begin{align*}
	f(v_h+t\zeta)&\geq f(v_h)+\eps_h (\|v_h-\eta_h\|^2-\|v_h-\eta_h+t\zeta\|^2),   \\
	f(v_h-t\zeta)&\geq f(v_h)+\eps_h (\|v_h-\eta_h\|^2-\|v_h-\eta_h-t\zeta\|^2). 
	\end{align*}
	Whence, taking into account the parallelogram law, it holds
	\begin{equation}
	f(v_h+t\zeta)+f(v_h-t\zeta)-2f(v_h)\geq -2\eps_h t^2\|\zeta\|^2,\quad\text{for all $\zeta\in X$ and $t\in\R$.}
	\end{equation}
	In turn, for every $w\in X$, it holds
	\begin{align*}
	Q_{v_h}(w) & =\limsup_{\substack{z\to v_h \\ \zeta \to w \\ t\to 0}}\frac{f(z+t\zeta)+f(z-t\zeta)-2f(z)}{t^2} \\
	&\geq \limsup_{\substack{\zeta \to w \\ t\to 0}}\frac{f(v_h+t\zeta)+f(v_h-t\zeta)-2f(v_h)}{t^2} \\
	&\geq -2\eps_h \|w\|^2,
	\end{align*}
	which yields the desired property on $Q_{v_h}$. Notice also that, from \eqref{varrel}, for every $h$ and $w\neq v_h$
	\begin{align*}
	\frac{f(v_h)-f(w)}{\|w-v_h\|}\leq \eps_h\frac{\|w-\eta_h\|^2-\|v_h-\eta_h\|^2}{\|w-v_h\|}\leq 
	\eps_h(\|w-\eta_h\|+\|v_h-\eta_h\|).
	\end{align*}
	By repeatedly applying~\eqref{seq3},
	we get $\|\T_{\eps_h}\xi_h\|\leq \|\xi_h\|$. Whence, recalling \eqref{2controllT}, it follows that
	\begin{align*}
	|df|(v_h) & \leq |\nabla f|(v_h)=\limsup_{w\to v_h}\frac{f(v_h)-f(w)}{\|w-v_h\|}\leq 2\eps_h \|v_h-\eta_h\| \\
	& \leq 2\eps_h \|v_h-\xi_h\|+2\eps_h \|\xi_h-\eta_h\| \\
	\noalign{\vskip3pt}
	& < 4\eps_h (\eps_h+\|\T_{\eps_h}\xi_h-\xi_h\|) 
	\leq 4\eps_h^2+8\eps_h \|\xi_h\|\leq C\eps_h.
	\end{align*}
	This concludes the proof.
\end{proof}

\noindent
In the framework of Definition~\ref{abssym} we also introduce the following
\begin{definition}
	\label{symmebeddddd}
We set $X_*:=\{u\in S: u^*=u\}$ and we say that 
$X$ is symmetrically embedded into $W$ if $\|u^*\|\leq \|u\|$ for all $u\in X$ and
the injection $i:X_*\hookrightarrow W$ is compact.
\end{definition}

\noindent
As a consequence of Theorem~\ref{maincor}, we have the following

\begin{corollary}
	\label{corsym11}
Let $X$ be symmetrically embedded in $W$ and let 
$f:X\to\R$ be a lower semi-continuous functional bounded from below such that \eqref{assumptionpol}-\eqref{seq4} hold.	
Then $f$ admits a $(SQPS)_{\inf f}$-sequence converging weakly in $X$ and strongly in $W$. 
If in addition $(SQPS)_{\inf f}$ holds, there exists a point 
$z\in S$ such that $f(z)=\inf f$, $|df|(z)=0$, $z=z^*$ and $Q_z\geq 0$.
\end{corollary}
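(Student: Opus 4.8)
The plan is to combine Theorem~\ref{maincor} with the compactness built into Definition~\ref{symmebeddddd} and then feed the resulting convergence into the $(SQPS)_{\inf f}$ condition. First I would invoke Theorem~\ref{maincor}: since $f$ is lower semi-continuous, bounded below and satisfies \eqref{assumptionpol}--\eqref{seq4}, there is an $(SQPS)_{\inf f}$-sequence $(v_h)\subset X$, that is, $f(v_h)\to\inf f$, $|df|(v_h)\to 0$, $\|v_h-v_h^*\|_V\to 0$ and $\liminf_h Q_{v_h}(w)\geq 0$ for every $w\in X$. The key extra observation to extract is that $(v_h)$ is bounded in $X$: the bounded minimizing sequence $(u_h)$ produced by \eqref{seq4}, together with \eqref{seq1}--\eqref{seq2}, gives a bounded $(\xi_h)\subset S$, and the estimates \eqref{2controllT} in the proof of Theorem~\ref{maincor} together with \eqref{seq3} (which forces $\|\T_{\eps_h}\xi_h\|\leq\|\xi_h\|$, hence $\|\T_{\eps_h}\xi_h-\xi_h\|\leq 2\|\xi_h\|\leq C$) show $\|v_h-\xi_h\|\leq\eps_h+\|\T_{\eps_h}\xi_h-\xi_h\|\leq C$, so $(v_h)$ is bounded. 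One could alternatively re-run the proof of Theorem~\ref{maincor} keeping track of the bound, but quoting \eqref{2controllT} is cleaner.

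Next I would upgrade boundedness to weak convergence and strong convergence in $W$. Since $X$ is symmetrically embedded in $W$, it is in particular reflexive enough for bounded sequences to have weakly convergent subsequences; after passing to a subsequence, $v_h\rightharpoonup z$ in $X$ for some $z\in X$. For the strong convergence in $W$ I would use the symmetry defect: write $v_h = v_h^* + (v_h-v_h^*)$. The symmetrized sequence $(v_h^*)\subset X_*$ is bounded (by $\|v_h^*\|\leq\|v_h\|\leq C$, part of Definition~\ref{symmebeddddd}), so by the compactness of $i:X_*\hookrightarrow W$ a subsequence of $(v_h^*)$ converges strongly in $W$; meanwhile $\|v_h-v_h^*\|_V\to 0$ and $V\hookrightarrow W$ continuously, so $\|v_h-v_h^*\|_W\to 0$. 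Hence $(v_h)$ itself converges strongly in $W$ along this subsequence, and the limit must coincide with the weak $X$-limit $z$ (because $X\hookrightarrow W$ continuously, weak convergence in $X$ implies weak convergence in $W$, and strong and weak $W$-limits agree). This gives the first assertion of the corollary: $f$ admits an $(SQPS)_{\inf f}$-sequence converging weakly in $X$ and strongly in $W$.

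For the final assertion, assume $f$ satisfies the $(SQPS)_{\inf f}$ condition. The sequence $(v_h)$ just produced is an $(SQPS)_{\inf f}$-sequence that converges strongly in $W$ (up to the subsequence already chosen), so by the $(SQPS)_{\inf f}$ condition it converges strongly in $X$, say $v_h\to z$ in $X$. Then $f(z)\le\liminf_h f(v_h)=\inf f$ by lower semicontinuity, so $f(z)=\inf f$; thus $z$ is a minimizer, hence a local minimum, so $|\nabla f|(z)=0$ and therefore $|df|(z)\le|\nabla f|(z)=0$ by Remark~\ref{weakstrong}. The symmetry $z=z^*$ follows from $\|v_h-v_h^*\|_V\to 0$ together with the continuity of $*$ with respect to $\|\cdot\|_V$ (estimate \eqref{contractivvv}: $\|v_h^*-z^*\|_V\le C_\Theta\|v_h-z\|_V\to 0$) and the continuity of $X\hookrightarrow V$: passing to the limit in $\|v_h-v_h^*\|_V$ gives $\|z-z^*\|_V=0$, so $z=z^*$ as elements of $V$, hence in $X$, and in particular $z\in X_*\subset S$. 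Finally, for each fixed $w\in X$, lower semicontinuity of $Q_{(\cdot)}(w)$ in the appropriate sense is not needed: since $v_h\to z$ strongly in $X$, one checks directly from the definition of $Q$ that $Q_z(w)\ge\liminf_h Q_{v_h}(w)$ — indeed the $\limsup$ defining $Q_z(w)$ is taken over $z'\to z$, $\zeta\to w$, $t\to 0$, and for each $h$ the difference quotients near $v_h$ are included among those near $z$ once $h$ is large, so $Q_z(w)\ge\limsup_{\substack{\zeta\to w\\ t\to 0}}(\cdots$ near $v_h)$ and then $\ge\liminf_h Q_{v_h}(w)\ge 0$. Hence $Q_z\ge 0$. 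The main obstacle is this last step: making rigorous the comparison between the double $\limsup$ defining $Q_z(w)$ and the quantities $Q_{v_h}(w)$, i.e.\ that strong convergence $v_h\to z$ in $X$ lets one absorb the outer $z'\to z$ limit; a diagonal argument choosing $z'=v_{h}$ along a suitable subsequence and exploiting that $(v_h)$ lies in every neighbourhood of $z$ eventually should settle it.
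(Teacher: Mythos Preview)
Your proof is correct and matches the paper's argument almost step by step: invoke Theorem~\ref{maincor}, read off boundedness of $(v_h)$ from \eqref{2controllT} and \eqref{seq3}, split $v_h=v_h^*+(v_h-v_h^*)$ and use the compact embedding $X_*\hookrightarrow W$ together with $\|v_h-v_h^*\|_V\to 0$ to get strong convergence in $W$, then feed this into the $(SQPS)_{\inf f}$ condition. (One small point: you justify weak compactness by saying $X$ is ``reflexive enough''; in the paper's standing hypotheses before \eqref{assumptionpol}, $X$ is a Hilbert space, which is what makes this immediate.)

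The two places where you diverge from the paper are the arguments for $|df|(z)=0$ and for $Q_z\ge 0$. For $|df|(z)=0$ the paper instead quotes the lower semi-continuity of the weak slope \cite[Proposition~2.6]{dm} to get $|df|(z)\le\liminf_h|df|(v_h)=0$; your route via $|\nabla f|(z)=0$ at a minimizer is equally valid and arguably simpler. For $Q_z\ge 0$, however, the paper bypasses entirely what you flag as ``the main obstacle'': once $z$ is known to be a global minimizer, one has $f(z\pm t\zeta)\ge f(z)$ for all $t,\zeta$, so restricting the $\limsup$ defining $Q_z(w)$ to the slice $z'=z$ already gives
\[
Q_z(w)\ \ge\ \limsup_{\substack{\zeta\to w\\ t\to 0}}\frac{f(z+t\zeta)+f(z-t\zeta)-2f(z)}{t^2}\ \ge\ 0.
\]
Your diagonal argument (choosing $(z'_h,\zeta_h,t_h)\to(v_h,w,0)$ with difference quotient $>-\eps$, then using $v_h\to z$ to make $z'_h\to z$) does go through and in fact proves the stronger inequality $Q_z(w)\ge\liminf_h Q_{v_h}(w)$, but it is unnecessary here: the minimality of $z$ gives $Q_z\ge 0$ for free, with no reference to the sequence at all.
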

\begin{proof}
	Let $C$ denote a generic constant that might change from line to line.
	By Theorem~\ref{maincor}, $f$ admits a 
	$(SQPS)_{\inf f}$-sequence $(v_h)\subset X$. By construction $(v_h)$ is bounded
	in $X$. In fact, with the notations in the proof of Theorem~\ref{maincor}, 
	there exist a vanishing sequence $(\eps_h)\subset\R^+$ 
	and a bounded sequence $(\xi_h)\subset S$, yielding
	$$
	\|v_h\|\leq \|v_h-\xi_h\|+\|\xi_h\|\leq \eps_h+\|\T_{\eps_h} \xi_h-\xi_h\|+\|\xi_h\|
	\leq \eps_h+3\|\xi_h\|\leq C.
	$$
	Hence, there exists $v\in X$ and a subsequence of $(v_h)$, that we will still indicate
	by $(v_h)$, such that $(v_h)$ weakly converges to $v$ in $X$. Since $X$ is symmetrically embedded into $W$,
	we have that $\|v_h^*\|\leq \|v_h\|\leq C$ and also, up to a further subsequence, $(v_h)$ converges in $W$ to some $\hat v\in W$.
	Of course, it is $v=\hat v$. If $f$ satisfies $(SQPS)_{\inf f}$,
	there exists a further subsequence, that we still denote by $(v_h)$, which converges to some $z$ in $X$. By lower semi-continuity, $f(z)=\inf f$.
	Since $|df|(v_h)\to 0$ and $f(v_h)\to \inf f=f(z)$, by means of \cite[Proposition 2.6]{dm}, it follows
	that $|df|(z)\leq\liminf_h |df|(v_h)=0$. Since $\|v_h-v_h^*\|_V\to 0$, letting $h\to\infty$ into 
	$\|z-z^*\|_V \leq \|z-v_h\|_V+\|v_h-v_h^*\|_V+\|v_h^*-z^*\|_V \leq K(C_\Theta+1)\|v_h-z\|+\|v_h-v_h^*\|_V$,  
	yields $z=z^*\in S$, as desired. Since $f(z)=\inf f$ and, by definition,
	$f(z+t\zeta)\geq f(z)$ and $f(z-t\zeta)\geq f(z)$ for all $t\in\R$ and $\zeta\in X$, 
	we infer that, for all $w \in X$,
	\begin{equation*}
	Q_z(w)\geq \limsup_{\substack{\zeta \to w \\ t\to 0}}\frac{f(z+t\zeta)+f(z-t\zeta)-2f(z)}{t^2}\geq 0.
	\end{equation*}
	This concludes the proof of the corollary.	
\end{proof}

\noindent
These results look particularly useful for applications to PDEs defined on a ball $\Omega$ or on $\R^N$, choosing 
$X=W^{1,p}_0(\Omega)$, $X=S$ or $S=W^{1,p}_0(\Omega,\R^+)$, $V=L^p\cap L^{p^*}(\Omega)$ and $W=L^q(\Omega)\supset V$
with $p<q<p^*$. These functional spaces are compatible with Definition~\ref{symmebeddddd}.

\subsection{Symmetric inf sup principles}
	\label{minimaxclass}
The symmetric version of Ekeland's variational principle 
allows to obtain a symmetric minimax type result, we refer to 
\cite[Theorem 5.1]{defig} for a standard statement without symmetry proved
via the standard Ekeland's principle. In fact,
let $(S,X,V,h,*)$ according to Definition~\ref{abssym}
and assume in addition that the map $\Theta:(X,\|\cdot\|)\to (S,\|\cdot\|)$ 
is continuous. Let $\psi\in S$ with $\psi^H=\psi$ 
for all $H\in {\mathcal H}_*$ (hence, in turn, $\psi^*=\psi$) and introduce the following spaces
\begin{align*}
& \hat X:=C([0,1],X),\quad \|\gamma\|_{\hat X}:=\sup_{t\in [0,1]}\|\gamma(t)\|, \quad
\hat V:=C([0,1],V)\quad \|\gamma\|_{\hat V}:=\sup_{t\in [0,1]}\|\gamma(t)\|_V, \\
& \hat S:=\{\gamma\in C([0,1],X):\gamma(0)=0,\,\,\,\gamma(1)=\psi\}.
\end{align*}
Define $*:\hat S\to \hat S$, $\gamma\mapsto \gamma^*$, 
and $h:\hat S\times {\mathcal H}_*\to \hat S$, $(\gamma,H)\mapsto \gamma^H$ by setting
$$
h(\gamma,H)(t):=\gamma(t)^H,\quad \gamma^*(t):=\gamma(t)^*,
\qquad\forall\gamma\in  \hat S,\,\,\forall H\in {\mathcal H}_*,\,\,\forall t\in [0,1].
$$
Notice that, since $X$ and $V$ are Banach spaces, $\hat X$ and $\hat V$ are Banach spaces, 
$\hat S\subset \hat X\subset \hat V$, $\hat S$ is a closed subset of $\hat X$ 
and $\hat X$ is continuously embedded into $\hat V$. Furthermore,
for all $\gamma\in \hat S$ it holds $\gamma^*,\gamma^H\in \hat S$
since $\gamma^*,\gamma^H\in C([0,1],X)$ and
$\gamma^H(0)=\gamma(0)^H=0$, $\gamma^H(1)=\psi^H=\psi$, 
$\gamma^*(0)=\gamma(0)^*=0$ and $\gamma^*(1)=\psi^*=\psi$.
It can be proved that $(\hat S,\hat X,\hat V,h,*)$ satisfies
the properties of Definition~\ref{abssym}. Given a $C^1$ functional
$f:X\to\R$ satisfying 
\begin{align}
\label{assumptionpol-001}
& f(u^H)\leq f(u),\quad\,\,\text{for all $u\in X$ and $H\in {\mathcal H}_*$}, \\
\label{boundbelow}
& \inf_{\gamma\in \hat S}\max_{t\in [0,1]} f(\gamma(t))> \max\{f(0),f(\psi)\},
\end{align}
consider the minimax value
$$
c=\inf_{\gamma\in \hat S}\max_{t\in [0,1]} f(\gamma(t)),
$$
and the functional $\hat f:\hat S\to\R$, bounded from below in view of \eqref{boundbelow}, defined by 
$$
\hat f(\gamma):=\max_{t\in [0,1]} f(\gamma(t)),\quad\text{for all $\gamma\in \hat S$.} 
$$
Notice that $\hat f$ is continuous (see \cite[proof of Theorem 5.1]{defig}) and, 
due to \eqref{assumptionpol-001}, it follows
$$
\hat f(\gamma^H)=\max_{t\in [0,1]} f(\gamma(t)^H)\leq 
\max_{t\in [0,1]} f(\gamma(t))=\hat f(\gamma),
\quad\text{for all $\gamma\in \hat S$.} 
$$
Then, by applying Theorem~\ref{ekelcor} (with $S'=\hat S$ and $\sigma=\rho=\eps>0$)
in place of the standard Ekeland's principle,
for every $\eps>0$ there exists $\gamma_\eps\in \hat S$ such that
$$
\|\gamma_\eps-\gamma_\eps^*\|_{\hat V}<\eps,\quad
c\leq \hat f(\gamma_\eps)\leq c+\eps,\quad
\hat f(\gamma)\geq \hat f(\gamma_\eps)-\eps \|\gamma-\gamma_\eps\|_{\hat X},\quad\forall \gamma\in \hat S.
$$
From these inequalities, by arguing along the lines of the proof of \cite[Theorem 5.1]{defig},
it is possible to show that, for every $\eps>0$ there exists $u_\eps\in X$
such that 
$$
\|u_\eps-u_\eps^*\|_V<\eps,\quad\,\,
\|df(u_\eps)\|\leq\eps,\quad\,\, c\leq f(u_\eps)\leq c+\eps. 
$$
The first inequality holds since, by construction, $u_\eps=\gamma_\eps(t_\eps)$
for some $t_\eps\in [0,1]$, yielding as desired
$\|u_\eps-u_\eps^*\|_V=\|\gamma_\eps(t_\eps)-\gamma_\eps(t_\eps)^*\|_V
\leq \|\gamma_\eps-\gamma_\eps^*\|_{\hat V}<\eps$.
Similar results were obtained in \cite{vansch} without using Ekeland variational principle.

\section{Some applications}
\label{applicationsprincip}

In this section we highlight possible applications of the abstract symmetric 
versions of the variational principles in the framework of PDEs, fixed point theory
and geometric properties of Banach spaces. 

\subsection{Calculus of variations} 
In this section we will consider two applications
of the symmetric principles to partial differential equations.

\subsubsection{A quasi-linear example}
\label{sectionPDEs}
Let $\Omega=B$ be the unit ball in $\R^N$ ($N\geq 1$), $1<p<\infty$ and define the functional
$f:W^{1,p}_0(\Omega)\to \R\cup\{+\infty\}$ by setting
\begin{equation}
	\label{f-calc-var}
f(u)=\int_{\Omega}{\mathcal L}(u,|Du|),
\end{equation}
where ${\mathcal L}$ is an integrand of class $C^1$ and, for $(s,\xi)\in\R\times\R^N$,
\begin{equation}
	\label{J1}
{\mathcal L}(s,|\xi|)\geq 0.
\end{equation}
Assume that $u$ belongs to $\dom{f}$ whenever $u\in W^{1,p}_0(\Omega)\cap L^\infty(\Omega)$.
The functions ${\mathcal L}_s$ and ${\mathcal L}_\xi$ are the derivatives 
of ${\mathcal L}$ with respect to the variables $s$ and $\xi$. We assume that there exist 
$\alpha,\beta,\gamma\in C(\R)$ and real numbers $a,b\in\R$ such that the following conditions hold: 
\begin{gather}
	\label{growthassumpptt0}
 {\mathcal L}(s,|\xi|)|\leq \alpha(|s|)|\xi|^p+b|\xi|^p+a,   \\
	\label{growthassumpptt}
 |{\mathcal L}_s(s,|\xi|)|\leq \beta(|s|)|\xi|^{p},\quad 
|{\mathcal L}_\xi(s,|\xi|)|\leq \gamma(|s|)|\xi|^{p-1}+b|\xi|^{p-1}+a,
\end{gather}
for every $(s,\xi)\in  \R\times\R^N$. We write the growth assumptions in such a fashion,
since in the particular case with $\beta=\gamma=0$, conditions \eqref{J1}-\eqref{growthassumpptt} reduce to
\cite[assumptions (4.12), (4.13) and (4.14)]{ekeland1} stated by Ekeland.
Now, since in the general case where $\beta$ and $\gamma$ are unbounded, ${\mathcal L}_s(u,|Du|)$
and ${\mathcal L}_\xi(u,|Du|)$ are not in $L^1_{{\rm loc}}(B)$ for a given function $u\in W^{1,p}_0(\Omega)$,
the Euler-Lagrange equation associated with $f$ cannot be given, at least a priori, a distributional sense.
To overcome this situation, in \cite{pelsqu}, for every $u\in W^{1,p}_0(\Omega)$ the following vector space, 
dense in $W^{1,p}_0(\Omega)$, was used
\begin{equation}\label{defvu}
V_u=\left\{v\in W^{1,p}_0(\Omega)\cap L^\infty(\Omega):\,
u\in L^{\infty}(\{x\in\Omega:\,v(x)\not=0\})\right\}.
\end{equation}
The following proposition can be obtained arguing as in \cite[Proposition 4.5]{pelsqu} and provides a link between
the weak slope and directional derivatives of $f$ along a direction $v\in V_u$.

\begin{proposition}\label{stimadj}
Under assumptions~\eqref{J1}-\eqref{growthassumpptt}, for every $u\in\dom{f}$, we have
\begin{equation*}
|df|(u)\geq \sup_{\substack{v\in V_u \\ \|v\|_{1,p}\leq 1}} 
\Big[\int_\Omega  {\mathcal L}_\xi(u,|Du|)\cdot Dv+\int_\Omega  {\mathcal L}_s(u,|Du|)v\Big].
\end{equation*}
\end{proposition}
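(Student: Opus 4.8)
The plan is to estimate the weak slope $|df|(u)$ from below by constructing, for each admissible direction $v\in V_u$ with $\|v\|_{1,p}\le 1$, an explicit deformation $\mathcal{H}$ of the type appearing in Definition~\ref{defslope}, whose infinitesimal decrease rate matches the directional derivative of $f$ along $-v$. The natural candidate is the affine homotopy $\mathcal{H}((\xi,\mu),t):=\xi - t v$ (possibly rescaled so that $\|v\|\le 1$ guarantees the displacement bound $\|\mathcal{H}((\xi,\mu),t)-\xi\|\le t$). First I would verify that this map is continuous on a neighborhood $B((u,f(u)),\delta)\cap\epi(f)\times[0,\delta]$ and respects the required displacement estimate; this part is routine once one restricts attention to $\xi$ close to $u$, because $v$ is fixed. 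The substance of the argument is the second condition $f(\mathcal{H}((\xi,\mu),t))\le f(\xi)-\sigma t$, equivalently $f(\xi-tv)\le f(\xi)-\sigma t$, which must hold uniformly for $(\xi,\mu)$ in the epigraph neighborhood, with $\sigma$ arbitrarily close to the claimed supremum.

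The key step is therefore a uniform differentiability statement: along directions $v\in V_u$, the map $t\mapsto f(\xi-tv)$ is differentiable at $t=0$ with derivative $-\big[\int_\Omega {\mathcal L}_\xi(\xi,|D\xi|)\cdot Dv+\int_\Omega {\mathcal L}_s(\xi,|D\xi|)v\big]$, and this derivative depends continuously on $\xi$ near $u$ (in $V_u$-directions). This is exactly the content imported from \cite[Proposition 4.5]{pelsqu}: the growth conditions~\eqref{J1}--\eqref{growthassumpptt}, combined with the crucial fact that $v\in V_u$ forces $u$ (hence $\xi$ close to $u$ in a suitable sense) to be bounded on $\{v\neq 0\}$, make the integrands ${\mathcal L}_s(\xi,|D\xi|)v$ and ${\mathcal L}_\xi(\xi,|D\xi|)\cdot Dv$ genuinely integrable and permit differentiation under the integral sign via dominated convergence. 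I would quote this computation rather than redo it, since the paper explicitly says the proposition is obtained ``arguing as in'' that reference.

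Given the uniform directional derivative, the estimate $f(\xi-tv)\le f(\xi)-\sigma t$ for small $t$ and $\xi$ near $u$ follows for any $\sigma$ strictly below $\int_\Omega {\mathcal L}_\xi(u,|Du|)\cdot Dv+\int_\Omega {\mathcal L}_s(u,|Du|)v$, by a standard Taylor/mean-value argument with the error controlled uniformly on the epigraph neighborhood (shrinking $\delta$ if necessary). Hence $|df|(u)\ge \sigma$, and letting $\sigma$ increase to that integral and then taking the supremum over all such $v$ yields the claim. The main obstacle is precisely the \emph{uniformity} in $\xi$ of the first-order expansion: one must ensure the remainder term in $f(\xi-tv)-f(\xi)+t\big[\int {\mathcal L}_\xi(\xi,|D\xi|)\cdot Dv+\int {\mathcal L}_s(\xi,|D\xi|)v\big]$ is $o(t)$ with a rate independent of $\xi$ in a small $W^{1,p}$-ball around $u$. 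This is where the constraint $v\in V_u$ does the essential work, by confining all the ``bad'' unbounded parts of $\beta,\gamma$ to the region $\{v=0\}$ where they are multiplied by zero; the technical heart of \cite[Proposition 4.5]{pelsqu} is exactly this localization, and I would invoke it in that form.
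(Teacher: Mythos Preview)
Your proposal is correct and matches the paper's treatment exactly: the paper gives no argument beyond stating that the result ``can be obtained arguing as in \cite[Proposition~4.5]{pelsqu},'' and your outline---the affine deformation $\mathcal H((\xi,\mu),t)=\xi-tv$ together with a uniform first-order expansion of $f$ along $V_u$-directions, with the delicate localization and dominated-convergence step delegated to that reference---is precisely the intended route. Your diagnosis of the main obstacle (uniformity in $\xi$ of the remainder, resolved via the $L^\infty$ control on $\{v\neq 0\}$ furnished by the definition of $V_u$) is accurate and is exactly what \cite{pelsqu} supplies.
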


\noindent
As a consequence of Proposition~\ref{stimadj} and Theorem~\ref{ekelcorII}, we have the following

\begin{theorem}
	\label{Eksymnonsm}
Assume that conditions \eqref{J1}-\eqref{growthassumpptt} hold and 
${\mathcal L}(-s,|\xi|)\leq {\mathcal L}(s,|\xi|)$ for all $s\leq 0$. Then, for any $\eps>0$, there exist 
$u_\eps\in W^{1,p}_0(\Omega)$ and $w_\eps\in W^{-1,p'}(\Omega)$ such that
\begin{equation}
	\label{quasisoll}
\langle w_\eps, v\rangle=\int_\Omega  {\mathcal L}_\xi(u_\eps,|Du_\eps|)\cdot Dv+
\int_\Omega  {\mathcal L}_s(u_\eps,|Du_\eps|)v\quad\,\,\forall v\in V_{u_\eps},
\end{equation}
as well as
\begin{equation*}
\|w_\eps\|_{W^{-1,p'}(\Omega)}\leq\eps
\quad
\text{and}
\quad
\|u_\eps-u_\eps^*\|_{L^p(\Omega)\cap L^{p^*}(\Omega)}<\eps.
\end{equation*}
\end{theorem}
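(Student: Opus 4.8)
The plan is to apply the symmetric Ekeland principle of Theorem~\ref{ekelcorII} to the functional $f$ in \eqref{f-calc-var}, in the concrete framework recalled in Sections~\ref{concresectpol}--\ref{concresectsym}: one takes $X=W^{1,p}_0(\Omega)$, $S=W^{1,p}_0(\Omega,\R^+)$, $V=L^p\cap L^{p^*}(\Omega)$, $h(u)=u^H$, $*(u)=u^*$ and $\Theta(u)=|u|$, so that $C_\Theta=1$ and properties (1)--(5) of Definition~\ref{abssym} are satisfied by \cite{vansch}. First I would verify the hypotheses of Theorem~\ref{ekelcorII}. The functional $f$ is proper, since $W^{1,p}_0(\Omega)\cap L^\infty(\Omega)\subseteq\dom f$; it is bounded from below by $0$ in view of \eqref{J1}; and it is lower semi-continuous on $(X,\|\cdot\|)$: if $u_n\to u$ in $W^{1,p}_0(\Omega)$, choosing a subsequence realizing $\liminf_n f(u_n)$ and then a further subsequence along which $u_n\to u$ and $Du_n\to Du$ a.e., Fatou's lemma (the integrand being nonnegative and ${\mathcal L}$ continuous) yields $f(u)\le\liminf_n f(u_n)$. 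Observe that only lower semi-continuity for the strong topology is needed, and available, here, so no convexity of ${\mathcal L}(s,\cdot)$ is required.

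Next I would check the two symmetry conditions. Condition \eqref{assumptionpol-00} for $f$ on $S$ holds since, for $u\in S$ and $H\in{\mathcal H}_*$, polarization is an equimeasurable rearrangement which maps $W^{1,p}_0(\Omega,\R^+)$ into itself (here $0\in H$ and $\Omega$ is a ball) and preserves the joint distribution of $(u,|Du|)$, so that in fact $f(u^H)=f(u)$, cf.\ Section~\ref{concresectpol} and \cite{vansch}. As for the requirement that for every $u\in\dom f$ there be $\xi\in S$ with $f(\xi)\le f(u)$, I would take $\xi:=|u|\in S$, for which $|D\xi|=|Du|$ a.e.; pointwise, ${\mathcal L}(|u(x)|,|Du(x)|)\le{\mathcal L}(u(x),|Du(x)|)$, with equality where $u(x)\ge0$ and, where $u(x)<0$, by the assumed inequality ${\mathcal L}(-s,|\xi|)\le{\mathcal L}(s,|\xi|)$ applied with $s=u(x)\le0$; integration gives $f(\xi)\le f(u)$, in particular $\xi\in\dom f$.

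Given $\eps>0$, I would then invoke Theorem~\ref{ekelcorII} with $\sigma=\eps$ and with $\rho>0$ so small that $(K(C_\Theta+1)+1)\rho=(2K+1)\rho<\eps$, where $K$ is the norm of the embedding $W^{1,p}_0(\Omega)\hookrightarrow L^p\cap L^{p^*}(\Omega)$. This produces $v\in X$ with $\|v-v^*\|_{L^p\cap L^{p^*}(\Omega)}<\eps$, with $f(v)\le\inf f+\eps\rho<+\infty$ (hence $v\in\dom f$), and with $f(w)\ge f(v)-\eps\|w-v\|$ for all $w\in X$. The last inequality forces $|\nabla f|(v)\le\eps$, hence $|df|(v)\le\eps$ by Remark~\ref{weakstrong}. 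Setting $u_\eps:=v$ and applying Proposition~\ref{stimadj}, the linear functional
\[
v'\ \longmapsto\ \int_\Omega{\mathcal L}_\xi(u_\eps,|Du_\eps|)\cdot Dv'+\int_\Omega{\mathcal L}_s(u_\eps,|Du_\eps|)v',\qquad v'\in V_{u_\eps},
\]
which is well defined and finite by the growth bounds \eqref{growthassumpptt} and the definition \eqref{defvu} of $V_{u_\eps}$, is bounded in absolute value by $|df|(u_\eps)\,\|v'\|_{1,p}\le\eps\,\|v'\|_{1,p}$ on $V_{u_\eps}$ (the supremum in Proposition~\ref{stimadj} running over the symmetric unit ball of $V_{u_\eps}$). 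Since $V_{u_\eps}$ is dense in $W^{1,p}_0(\Omega)$, this functional extends uniquely to $w_\eps\in W^{-1,p'}(\Omega)=(W^{1,p}_0(\Omega))'$ with $\|w_\eps\|_{W^{-1,p'}(\Omega)}\le\eps$, and \eqref{quasisoll} holds by construction; together with the symmetry estimate this proves the theorem.

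I do not expect a single hard step: the proof is an assembly of Theorem~\ref{ekelcorII}, Proposition~\ref{stimadj} and the concrete polarization framework of \cite{vansch}. The delicate points are, first, that lower semi-continuity is only needed---and only available---for the strong $W^{1,p}_0(\Omega)$ topology, no convexity of ${\mathcal L}(s,\cdot)$ being assumed; second, that the sign condition ${\mathcal L}(-s,|\xi|)\le{\mathcal L}(s,|\xi|)$ is precisely what makes $\xi=|u|$ an admissible competitor in $S$ and what guarantees that $f$ does not increase under polarization; and third, that the functional to be represented lives a priori only on the $u_\eps$-dependent dense subspace $V_{u_\eps}$, so that the passage to $W^{-1,p'}(\Omega)$ rests on the density of $V_{u_\eps}$ in $W^{1,p}_0(\Omega)$ together with the uniform bound coming from the weak-slope estimate.
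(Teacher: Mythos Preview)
Your proposal is correct and follows essentially the same route as the paper: apply Theorem~\ref{ekelcorII} in the concrete framework $X=W^{1,p}_0(\Omega)$, $S=W^{1,p}_0(\Omega,\R^+)$, $V=L^p\cap L^{p^*}(\Omega)$, with $\xi=|u|$, then pass from the variational inequality to $|df|(u_\eps)\le\eps$ via Remark~\ref{weakstrong}, invoke Proposition~\ref{stimadj}, and extend from the dense subspace $V_{u_\eps}$ to $W^{-1,p'}(\Omega)$. The only cosmetic difference is that the paper cites the Hahn--Banach theorem for the last step whereas you use the (equivalent, here sufficient) unique extension from a dense subspace; also, in your closing commentary, note that the invariance $f(u^H)=f(u)$ comes from the radial structure ${\mathcal L}(s,|\xi|)$ alone, not from the sign condition, which is used only to ensure $f(|u|)\le f(u)$.
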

\begin{proof}
	The functional $f$ in formula \eqref{f-calc-var} is proper, bounded below and lower semi-continuous by means of
	condition \eqref{J1} and Fatou's lemma.
	Moreover, the assumptions of Theorem~\ref{ekelcorII} are satisfied with $X=W^{1,p}_0(\Omega)$,
	$S=W^{1,p}_0(\Omega,\R^+)$, $V=L^p(\Omega)\cap L^{p^*}(\Omega)$, $\xi=|u|$ and 
	where $u^H,u^*$ for $u\in S$ (and $u^*=|u|^*$ for $u\in X$) denote
	the polarization and symmetrization (see sections \ref{concresectpol}-\ref{concresectsym}). Assumption \eqref{assumptionpol-00} 
	holds with equal sign by the radial structure of the integrand, as it can be verified
	by direct computation. The assertion follows by Theorem~\ref{ekelcorII} (recall also Remark	\ref{weakstrong}), 
	Proposition~\ref{stimadj} and the Hahn-Banach theorem, taking into account the density of $V_{u_\eps}$ in $W^{1,p}_0(\Omega)$.
\end{proof}

\noindent
In many cases one recovers the fact that the solution $u_\eps$ of equation 
\eqref{quasisoll} is actually meant in the sense of distributions, by suitably enlarging the class
of admissible test functions, see e.g.~\cite[Theorem 4.10 and Lemma 4.6]{pelsqu}.
Theorem~\ref{Eksymnonsm} could be seen as a non-smooth symmetric version of \cite[Proposition 4.3(a)]{ekeland1}.
In fact, under the above assumptions our functional is merely lower semi-continuous, while the
functional of \cite[Proposition 4.3(a)]{ekeland1} is of class $C^1$. Furthermore, the symmetry features in Theorem~\ref{Eksymnonsm}
can be obtained via Theorem~\ref{ekelcorII} due to the structure ${\mathcal L}(s,|\xi|)$ yielding \eqref{assumptionpol-00}, 
in place of the more general form 
${\mathcal L}(x,s,\xi)$, admissible in \cite{ekeland1}. Theorem~\ref{Eksymnonsm} is new even in the particular case
$\beta=\gamma=0$. We stress that a constrained version of Theorem~\ref{Eksymnonsm} could also be provided, yielding a 
non-smooth symmetric counterpart of \cite[Proposition 4.3(b)]{ekeland1}.

\subsubsection{A semi-linear example}
Let us now briefly discuss another example where the second 
order condition related to $w\mapsto Q_u(w)$ is also involved,
namely the inferior limit in formula~\eqref{addproperty}, in a $C^1$, but not $C^2$, framework.
In \cite{bartschdeg}, Bartsch and Degiovanni showed that, in some concrete cases of interest
in the theory of partial differential equations, although it is often not possible to compute the values of $Q_u(w)$,
it is possible to compute a greater quantity greater. For instance, if $f$ is of class $C^1$, 
then, see \cite[Remark 4.4]{bartschdeg}, for every $w\in X$
$$
Q_u(w)\leq \limsup_{\substack{(\tau,\theta)\to (0,0) \\ z\to u \\ \zeta\to w}}\frac{f'(z+\tau\zeta)\zeta-f'(z+\theta\zeta)\zeta}{\tau-\theta},
$$ 
being the right-hand side more easy to estimate, in some cases \cite[Propositions 4.5]{bartschdeg}.
For instance, let now $\Omega=B$ be the unit ball in $\R^3$, the three dimensional case being considered just for simplicity. 
Let also $g:\R\to\R$ be a continuous function and assume that there exist $a_1,a_2\in\R$, $b\in\R$
and $2<p\leq 6$ such that, for all $s,t\in\R$, it holds
\begin{gather*}
 |g(s)| \leq a_1+b|s|^{p-1}\quad\,\text{and}\,\quad g(-s)=-g(s), \\
 (g(s)-g(t))(s-t) \geq -(a_2+b|s|^{p-2}+b|t|^{p-2})(s-t)^2.
\end{gather*}
Then, for all $s\in\R$, define a measurable function $\underline{D}_s g$ by setting
$$
\underline{D}_s g(s):=\liminf_{\substack{(t,\tau)\to (0,0) \\ t,\tau\in\Q}}\frac{g(s+t)-g(s+\tau)}{t-\tau}.
$$
Let $G(s)=\int_0^s g(t)dt$ and consider the $C^1$ functional $f:H^1_0(\Omega)\to\R$ defined by
$$
f(u)=\frac{1}{2}\int_\Omega |Du|^2-\int_\Omega G(u).
$$
In light of~\cite[Proposition 6.1]{bartschdeg}, it holds
\begin{equation}
	\label{inequality}
Q_u(w)\leq \int_\Omega |Dw|^2-\int_\Omega \underline{D}_s g(u)w^2<+\infty,\quad
\forall u,w\in H^1_0(\Omega). 
\end{equation}

\noindent
Therefore, combining Theorem~\ref{maincor} with the above setting yields the following
\begin{theorem}
	\label{EksymnonsmOO}
Assume that $f$ is bounded from below and admits a bounded minimizing sequence.
Then $f$ has a minimizing sequence $(u_h)\subset H^1_0(\Omega)$ and a 
sequence $(\psi_h)\subset H^{-1}(\Omega)$ such that
\begin{align*}
& \lim_h \|u_h-u_h^*\|_{L^2(\Omega)\cap L^{2^*}(\Omega)}=0, \\
& \int_\Omega Du_hD\varphi =\int_\Omega g (u_h)\varphi+\langle \psi_h,\varphi\rangle,\,\,\quad \forall\varphi\in H^1_0(\Omega),
\quad \lim_h\|\psi_h\|_{H^{-1}}=0, \\
& \liminf_h\Big[\int_\Omega |Dw|^2-\int_\Omega \underline{D}_s g(u_h)w^2\Big]\geq 0,\quad\,\, \forall w\in H^1_0(\Omega).
\end{align*}
\end{theorem}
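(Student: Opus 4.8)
The plan is to obtain the sequence directly from Theorem~\ref{maincor} (applied to a Hilbert space), and then to transcribe each of its conclusions into PDE language. Concretely, I would work with $X=H^1_0(\Omega)$, $S=H^1_0(\Omega,\R^+)$ the cone of nonnegative functions, $V=L^2(\Omega)\cap L^{2^*}(\Omega)$, $W=L^q(\Omega)$ with $2<q<2^*$ (or simply $W=V$), and with $h(u):=|u|^H$, $*(u):=|u|^*$ the concrete polarization and Schwarz symmetrization of Section~\ref{concresectsym}, extended to all of $X$ via $\Theta(u):=|u|$. The growth bound $|g(s)|\leq a_1+b|s|^{p-1}$ with $2<p\leq 6=2^*$ ensures that $f$ is well defined and of class $C^1$ on $H^1_0(\Omega)$ (in particular lower semi-continuous), the borderline value $p=2^*$ being the only place where a little extra care is needed, in a by-now classical way.

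First I would verify the hypotheses \eqref{assumptionpol}--\eqref{seq4} of Theorem~\ref{maincor}. Condition \eqref{seq4} is precisely the assumption in the statement, and $f$ is bounded below by hypothesis. For \eqref{assumptionpol}: polarization is an isometry for the Dirichlet norm on $W^{1,2}$, so $\int_\Omega|D(u^H)|^2=\int_\Omega|Du|^2$, while $u^H$ is equimeasurable with $u$, whence $\int_\Omega G(u^H)=\int_\Omega G(u)$; thus $f(u^H)=f(u)$ for every $u\in S$ and $H\in{\mathcal H}_*$, which in particular gives \eqref{assumptionpol} with equality. For \eqref{seq1} take $\xi:=|u|$: since $|D|u||=|Du|$ a.e.\ and $G$ is even (because $g$ is odd), one has $f(|u|)=f(u)$. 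Moreover $\||u|\|_{H^1_0}=\|u\|_{H^1_0}$, giving \eqref{seq2}, and polarization preserves the $H^1_0$-norm, $\|u^H\|=\|u\|$, giving \eqref{seq3}. The structural properties (1)--(5) of Definition~\ref{abssym} for this concrete triple are exactly those recalled in Sections~\ref{concresectpol}--\ref{concresectsym} (see \cite{vansch}).

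Granted this, Theorem~\ref{maincor} produces $(u_h)\subset H^1_0(\Omega)$ with $f(u_h)\to\inf f$, $|df|(u_h)\to0$, $\|u_h-u_h^*\|_V\to0$ and $\liminf_h Q_{u_h}(w)\geq0$ for all $w\in H^1_0(\Omega)$. The first two items say that $(u_h)$ is a minimizing sequence; the third is literally the first displayed relation $\|u_h-u_h^*\|_{L^2\cap L^{2^*}}\to0$. Since $f\in C^1$, Remark~\ref{weakstrong} gives $|df|(u_h)=\|f'(u_h)\|_{H^{-1}}$; setting $\psi_h:=f'(u_h)\in H^{-1}(\Omega)$ and using $\langle f'(u_h),\varphi\rangle=\int_\Omega Du_hD\varphi-\int_\Omega g(u_h)\varphi$ yields the second displayed identity, with $\|\psi_h\|_{H^{-1}}=|df|(u_h)\to0$. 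Finally, combining $\liminf_h Q_{u_h}(w)\geq0$ with the upper estimate \eqref{inequality}, namely $Q_{u_h}(w)\leq\int_\Omega|Dw|^2-\int_\Omega\underline{D}_s g(u_h)w^2$, gives $\liminf_h\big[\int_\Omega|Dw|^2-\int_\Omega\underline{D}_s g(u_h)w^2\big]\geq0$ for every $w$, which is the third displayed relation.

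The only genuinely substantive point is the polarization invariance $f(u^H)=f(u)$, i.e.\ that the semilinear energy does not increase under polarization; this rests on the two classical facts that polarization preserves both the Dirichlet norm of $W^{1,2}$-functions and the distribution function, both already built into the concrete framework of Section~\ref{concresectsym}. Everything else is a direct transcription of the definition of an $(SQPS)_c$-sequence, together with the $C^1$ identity $|df|(u)=\|f'(u)\|$ and the inequality \eqref{inequality} borrowed from \cite[Proposition 6.1]{bartschdeg}.
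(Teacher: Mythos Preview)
Your proof is correct and follows essentially the same route as the paper: apply Theorem~\ref{maincor} with $X=H^1_0(\Omega)$, $S=H^1_0(\Omega,\R^+)$, $V=L^2(\Omega)\cap L^{2^*}(\Omega)$, verify \eqref{assumptionpol}--\eqref{seq4} via $f(u^H)=f(u)$, $f(|u|)=f(u)$ and $\|u^H\|_{H^1_0}=\|u\|_{H^1_0}$, and then translate the $(SQPS)_{\inf f}$ conclusions using $|df|(u)=\|f'(u)\|_{H^{-1}}$ and inequality~\eqref{inequality}. Your write-up is in fact more detailed than the paper's, which simply records the choice of spaces and the three key identities and refers to the preceding remarks.
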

\begin{proof}
Based upon the above remarks, the assertion follows by Theorem~\ref{maincor} by choosing $X=H^1_0(\Omega)$, $S=H^1_0(\Omega,\R^+)$,
$V=L^2(\Omega)\cap L^{2^*}(\Omega)$, since
$f(u^H)=f(u)$ for all $u\in S$ and $H\in {\mathcal H}_*$, as well as $f(|u|)=f(u)$
for all $u\in X$ and $\|u^H\|_{H^1_0(\Omega)}=\|u\|_{H^1_0(\Omega)}$ for all 
$u\in S$ and $H\in {\mathcal H}_*$.
\end{proof}

\subsection{Fixed points}

The following is a symmetric version of the so-called Caristi Fixed Point Theorem \cite{caristi},
that was also proved by Ekeland via his principle in \cite{ekeland2}.

\begin{theorem}[{Symmetric Caristi Fixed Point Theorem}]
	\label{caristi}
Let $X$ be a Banach space and $F:X\to X$ a map such that
$$
\|F(u)-u\|\leq f(u)-f(F(u)),\,\,\quad\text{for all $u\in X$},
$$
where $f:X\to\R$ is a lower semi-continuous function, bounded from below,
satisfying~\eqref{assumptionpol-00} and such that, for all $u\in X$ there exists 
$\xi\in S$ with $f(\xi)\leq f(u)$. Then, for all $\eps\in (0,1)$, there exists a fixed point
$\xi_\eps\in X$ of $F$ such that $\|\xi_\eps-\xi_\eps^*\|_V<\eps$.
\end{theorem}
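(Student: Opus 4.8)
The plan is to mimic Ekeland's classical derivation of the Caristi Fixed Point Theorem, but to feed into it the \emph{symmetric} version of Ekeland's principle (Theorem~\ref{ekelcorII}) in place of the classical one, so that the resulting fixed point inherits the almost-symmetry estimate. First I would observe that the hypotheses on $f$ are precisely those required by Theorem~\ref{ekelcorII}: $f$ is proper (it is real-valued), lower semi-continuous, bounded from below, it satisfies \eqref{assumptionpol-00}, and for every $u\in\dom{f}=X$ there exists $\xi\in S$ with $f(\xi)\leq f(u)$. Fix $\eps\in(0,1)$ and apply Theorem~\ref{ekelcorII} with $\sigma$ chosen slightly less than $1$ (say $\sigma=1/2$) and $\rho$ chosen small enough that $(K(C_\Theta+1)+1)\rho<\eps$. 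This yields a point $v=\xi_\eps\in X$ with
\begin{equation*}
\|v-v^*\|_V<(K(C_\Theta+1)+1)\rho<\eps,\qquad f(w)\geq f(v)-\tfrac12\|w-v\|\quad\text{for all }w\in X.
\end{equation*}

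Next I would show that this $v$ is a fixed point of $F$. Apply the displayed inequality of Theorem~\ref{ekelcorII}(b) with the particular choice $w=F(v)$, obtaining
\begin{equation*}
f(F(v))\geq f(v)-\tfrac12\|F(v)-v\|.
\end{equation*}
On the other hand, the Caristi hypothesis on $F$ gives $\|F(v)-v\|\leq f(v)-f(F(v))$, i.e.\ $f(F(v))\leq f(v)-\|F(v)-v\|$. Combining the two inequalities yields
\begin{equation*}
f(v)-\|F(v)-v\|\geq f(F(v))\geq f(v)-\tfrac12\|F(v)-v\|,
\end{equation*}
hence $-\|F(v)-v\|\geq-\tfrac12\|F(v)-v\|$, which forces $\|F(v)-v\|\leq 0$, i.e.\ $F(v)=v$. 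Thus $\xi_\eps:=v$ is a fixed point of $F$ satisfying $\|\xi_\eps-\xi_\eps^*\|_V<\eps$, as desired.

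The argument is essentially routine once one has the symmetric Ekeland principle in hand; the only genuine point requiring care is the bookkeeping of the constants, namely making sure that $\rho$ can be chosen so that $(K(C_\Theta+1)+1)\rho<\eps$ while keeping $\sigma<1$ (which is what makes the contradiction $\|F(v)-v\|\leq\tfrac12\|F(v)-v\|$ strict and hence forces $F(v)=v$). One could also phrase this more symmetrically by taking $\sigma=\rho$ small, in the spirit of the author's note \cite{squass}, but any fixed $\sigma\in(0,1)$ works for the fixed point conclusion. I do not expect any serious obstacle: the content of the theorem is entirely carried by Theorem~\ref{ekelcorII}, and the reduction of Caristi's theorem to Ekeland's principle is classical, the symmetric refinement being a transparent addendum.
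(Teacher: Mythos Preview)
Your proof is correct and follows essentially the same route as the paper: apply the symmetric Ekeland principle (Theorem~\ref{ekelcorII}) with $\sigma<1$, then use the Caristi inequality at $w=F(v)$ to force $F(v)=v$. Your bookkeeping of the constant $(K(C_\Theta+1)+1)$ is in fact slightly more careful than the paper's, which takes $\sigma=\rho=\eps$ and writes $\|\xi_\eps-\xi_\eps^*\|_V<\eps$ directly; your choice of $\rho$ small enough that $(K(C_\Theta+1)+1)\rho<\eps$ makes the stated bound literally true.
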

\begin{proof}
By virtue of Theorem~\ref{ekelcorII} with $\sigma=\rho=\eps>0$, for every $\eps\in (0,1)$ there exists an element 
$\xi_\eps\in X$ such that $\|\xi_\eps-\xi_\eps^*\|_V<\eps$, and 
$$
f(w)\geq f(\xi_\eps)-\eps\|w-\xi_\eps\|,\qquad\text{for all $w\in X$}.
$$
In particular, choosing $w=F(\xi_\eps)$ and using the assumption, we get
$$
\|F(\xi_\eps)-\xi_\eps\|\leq f(\xi_\eps)-f(F(\xi_\eps))
\leq \eps\|F(\xi_\eps)-\xi_\eps\|,
$$
which yields $F(\xi_\eps)=\xi_\eps$, concluding the proof.
\end{proof}

\noindent
Let $\Omega$ be either a ball in $\R^N$ of the whole $\R^N$ and $1<p<\infty$. 

\begin{corollary}
Let $F:W^{1,p}_0(\Omega)\to W^{1,p}_0(\Omega)$ a map such that
$$
\|F(u)-u\|_{1,p}\leq f(u)-f(F(u)),\qquad\text{for all $u\in W^{1,p}_0(\Omega)$},
$$
where $f:W^{1,p}_0(\Omega)\to\R$ is a lower semi-continuous function bounded from below such that
$$
f(|u|)\leq f(u)\quad\text{for all $u\in W^{1,p}_0(\Omega)$},\qquad
f(u^H)\leq f(u)\quad\text{for all $u\in W^{1,p}_0(\Omega,\R^+)$}.
$$
Then, for all $\eps\in (0,1)$, there is a fixed point
$\xi_\eps\in W^{1,p}_0(\Omega)$ of $F$ with $\|\xi_\eps-\xi_\eps^*\|_{L^p\cap L^{p^*}(\Omega)}<\eps$.
\end{corollary}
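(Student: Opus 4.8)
The plan is to read the statement as the concrete Sobolev incarnation of the abstract Symmetric Caristi Fixed Point Theorem (Theorem~\ref{caristi}), so that the whole argument reduces to matching hypotheses and invoking that theorem.

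First I would set up the concrete framework of Subsection~\ref{concresectsym}: take $X=W^{1,p}_0(\Omega)$, $S=W^{1,p}_0(\Omega,\R^+)$, $V=L^p\cap L^{p^*}(\Omega)$, with $h(u):=|u|^H$ and $*(u):=|u|^*$ the concrete polarization and Schwarz symmetrization, and with $\Theta(u):=|u|$, which is a Lipschitz map $(X,\|\cdot\|_V)\to(S,\|\cdot\|_V)$ of constant $C_\Theta=1$ satisfying $\Theta|_S=\mathrm{Id}|_S$. Since $\Omega$ is a ball or all of $\R^N$, properties (1)--(5) of Definition~\ref{abssym} hold in this setting by \cite{vansch}; in particular the extended maps $u\mapsto u^H$ and $u\mapsto u^*$ on $X$ are well defined and $\|\cdot\|_V$-contractive up to the factor $C_\Theta$, as in \eqref{contractivvv}. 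Hence Theorem~\ref{caristi} is applicable with this choice of $(S,X,V,h,*)$.

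Next I would verify the three hypotheses of Theorem~\ref{caristi}. The Caristi-type inequality $\|F(u)-u\|_{1,p}\le f(u)-f(F(u))$ for all $u\in X$ is assumed; $f$ is lower semi-continuous and bounded from below by assumption; condition~\eqref{assumptionpol-00} — that $f(u^H)\le f(u)$ for all $u\in S$ and $H\in{\mathcal H}_*$ — is exactly one of the stated hypotheses. Finally, for every $u\in X$ one takes $\xi:=|u|\in S$; by the assumption $f(|u|)\le f(u)$ this $\xi$ satisfies $f(\xi)\le f(u)$, which is precisely the remaining requirement of Theorem~\ref{caristi}.

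With the hypotheses checked, Theorem~\ref{caristi} yields, for every $\eps\in(0,1)$, a fixed point $\xi_\eps\in X$ of $F$ with $\|\xi_\eps-\xi_\eps^*\|_V<\eps$; recalling the identification $V=L^p\cap L^{p^*}(\Omega)$, this reads $\|\xi_\eps-\xi_\eps^*\|_{L^p\cap L^{p^*}(\Omega)}<\eps$, which is the assertion. I do not expect a genuine obstacle here: the only point requiring attention is the bookkeeping of the symmetrization apparatus — that $\xi_\eps^*$ in the conclusion is the Schwarz symmetrization $|\xi_\eps|^*$ on $\Omega^*=\Omega$, and that the extensions of $h$ and $*$ to $X$ are the ones induced by $\Theta=|\cdot|$ — all of which is covered by the structural results of \cite{vansch} valid on balls and on $\R^N$.
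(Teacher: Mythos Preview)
Your proposal is correct and follows essentially the same approach as the paper: you apply Theorem~\ref{caristi} with $X=W^{1,p}_0(\Omega)$, $S=W^{1,p}_0(\Omega,\R^+)$, $V=L^p\cap L^{p^*}(\Omega)$, and verify its hypotheses via the assumptions $f(|u|)\le f(u)$ (yielding the required $\xi\in S$) and $f(u^H)\le f(u)$ on $S$, exactly as the paper does.
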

\begin{proof}
Theorem~\ref{caristi} is applied with $X=W^{1,p}_0(\Omega)$, $S=W^{1,p}_0(\Omega,\R^+)$ 
and $V=L^p\cap L^{p^*}(\Omega)$. As pointed out on Section~\ref{polarizationsect}, if $u^H$ is the polarization of positive functions on $\R^N$ 
and $*$ is the Schwarz symmetrization, the framework of Definition~\ref{abssym} is satisfied.	
\end{proof}

\noindent
Let $\Omega$ be either a ball in $\R^N$ of the whole $\R^N$ and $1<p<\infty$.

\begin{corollary}
Let $F:L^p(\Omega)\to L^p(\Omega)$ a map such that
$$
\|F(u)-u\|_p\leq f(u)-f(F(u)),\qquad\text{for all $u\in L^p(\Omega)$},
$$
where $f:L^p(\Omega)\to\R$ is a lower semi-continuous function bounded from below such that
$$
f(|u|)\leq f(u)\quad\text{for all $u\in L^p(\Omega)$},\qquad
f(u^H)\leq f(u)\quad\text{for all $u\in L^p(\Omega,\R^+)$}.
$$
Then, for all $\eps\in (0,1)$, there is a fixed point
$\xi_\eps\in L^p(\Omega)$ of $F$ with $\|\xi_\eps-\xi_\eps^*\|_{L^p(\Omega)}<\eps$.
\end{corollary}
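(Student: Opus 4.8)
The plan is to deduce the statement directly from Theorem~\ref{caristi} (the symmetric Caristi fixed point theorem), after specializing the abstract scheme of Definition~\ref{abssym} to the Lebesgue setting. Concretely, I would take $X=W=L^p(\Omega)$ and $V=L^p(\Omega)$, let $S=L^p(\Omega,\R^+)$ be the cone of nonnegative functions, let $h$ be the polarization of positive functions described in Section~\ref{concresectpol}, let $*$ be the Schwarz symmetrization of Section~\ref{concresectsym}, and set $\Theta(u):=|u|$. The first task is to check that $(S,X,V,h,*)$ satisfies conditions (1)--(5) of Definition~\ref{abssym}: (1) is trivial since the three spaces coincide; (2) and (3) are the standard pointwise identities for polarization; (4) is van Schaftingen's approximation of the Schwarz symmetrization by iterated polarizations in $L^p$; and (5) is the contractivity $\|u^H-v^H\|_p\leq\|u-v\|_p$, a classical consequence of the rearrangement inequalities. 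One should also record that every $u\in S=L^p(\Omega,\R^+)$ is admissible for the Schwarz symmetrization, since Chebyshev's inequality gives $\mathcal{L}^N(\{u>\eps\})<\infty$ for all $\eps>0$, and that $\Theta=|\cdot|$ is $1$-Lipschitz from $(X,\|\cdot\|_p)$ to $(S,\|\cdot\|_p)$ with $\Theta|_S=\mathrm{Id}$, so the extension of $h$ and $*$ to all of $X$ is legitimate and $C_\Theta=1$.

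Next I would verify the hypotheses of Theorem~\ref{caristi}. The functional $f$ is proper, lower semicontinuous and bounded below by assumption, and $F$ satisfies the Caristi-type inequality $\|F(u)-u\|_p\leq f(u)-f(F(u))$. The symmetry assumption~\eqref{assumptionpol-00}, namely $f(u^H)\leq f(u)$ for all $u\in S$ and $H\in{\mathcal H}_*$, is precisely the second displayed hypothesis on $f$. The remaining requirement of Theorem~\ref{caristi} --- that for every $u\in X$ there exist $\xi\in S$ with $f(\xi)\leq f(u)$ --- follows by taking $\xi:=|u|\in L^p(\Omega,\R^+)=S$ and invoking the first displayed hypothesis $f(|u|)\leq f(u)$.

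With all the hypotheses in place, Theorem~\ref{caristi} applied with $\sigma=\rho=\eps\in(0,1)$ produces a point $\xi_\eps\in X=L^p(\Omega)$ which is a fixed point of $F$ and satisfies $\|\xi_\eps-\xi_\eps^*\|_V<\eps$; since here $V=L^p(\Omega)$, this is exactly the asserted estimate $\|\xi_\eps-\xi_\eps^*\|_{L^p(\Omega)}<\eps$, and the proof is complete.

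I expect the only point requiring genuine care to be the verification that the abstract framework of Definition~\ref{abssym} applies with the single space $L^p(\Omega)$ playing the roles of $X$, $V$ and $W$ simultaneously: in the concrete instances recalled in Section~\ref{concresectsym} one works with $V=L^p\cap L^{p^*}$, so one must observe that, restricting to $L^p$ alone, both the contractivity~(5) and the approximation~(4) remain valid (they are precisely the $L^p$ statements established in \cite{vansch}) and that, when $\Omega=\R^N$, nonnegative $L^p$ functions are automatically admissible for the Schwarz symmetrization. Everything else is a routine specialization of Theorem~\ref{caristi}, entirely parallel to the preceding corollary for $W^{1,p}_0(\Omega)$.
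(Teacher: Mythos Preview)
Your proposal is correct and follows exactly the paper's own approach: the paper's proof is the single line ``Theorem~\ref{caristi} is applied with $X=V=L^p(\Omega)$ and $S=L^p(\Omega,\R^+)$,'' and you have supplied precisely this specialization together with the routine verification of the abstract hypotheses. The extra care you take in checking admissibility for Schwarz symmetrization and the $L^p$-contractivity of polarization is more detail than the paper provides, but the strategy is identical.
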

\begin{proof}
Theorem~\ref{caristi} is applied with $X=V=L^p(\Omega)$ and $S=L^p(\Omega,\R^+)$. 
\end{proof}

\noindent
We conclude the section with a symmetric version of a fixed point theorem due to Clarke \cite{clarke} and
also proved by Ekeland via his principle \cite{ekeland2}.

\begin{theorem}
	\label{clarkethm}
	Let $(X,\|\cdot\|_V)$ be a Banach space, $F:(X,\|\cdot\|_V)\to (X,\|\cdot\|_V)$ continuous 
	and assume that there exists $0<\sigma<1$ such that
	\begin{equation}
		\label{clarkeasss}
	\forall u\in X\,\, \exists t\in (0,1]:\,\,\,
	\|F(t F(u)+(1-t)u)-F(u)\|_V\leq \sigma t\|F(u)-u\|_V.
	\end{equation}
Assume that $F(S)\subset S$, $F(u^H)=F(u)^H$ for all $H\in {\mathcal H}_*$ and $u\in S$,
and that for every $u\in X$ there exists $\xi\in S$ such that $\|\xi-F(\xi)\|_V\leq \|u-F(u)\|_V$.
Then, for any $\eps\in (0,1-\sigma)$ there exists a fixed point $\xi_\eps\in X$ for $F$ 
such that $\|\xi_\eps-\xi_\eps^*\|_V<\eps$.
\end{theorem}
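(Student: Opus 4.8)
The plan is to adapt the proof of Theorem~\ref{caristi}, using Clarke's iteration mechanism in place of Caristi's and the symmetric Ekeland principle (Theorem~\ref{ekelcorII}) in place of the classical one. First I would set $f:X\to\R$, $f(u):=\|u-F(u)\|_V$; since $F$ is continuous on $(X,\|\cdot\|_V)$, the functional $f$ is continuous (hence lower semicontinuous), everywhere finite (so proper) and bounded below by $0$. The one genuinely delicate verification is the polarization monotonicity~\eqref{assumptionpol-00}: for $u\in S$ and $H\in{\mathcal H}_*$ one has $F(u)\in F(S)\subseteq S$, so the non-expansivity (5) of Definition~\ref{abssym} together with the equivariance $F(u^H)=F(u)^H$ yields
$$
f(u^H)=\|u^H-F(u^H)\|_V=\|u^H-F(u)^H\|_V\le\|u-F(u)\|_V=f(u).
$$
Moreover, the hypothesis that for every $u\in X$ there is $\xi\in S$ with $\|\xi-F(\xi)\|_V\le\|u-F(u)\|_V$ is exactly the requirement of Theorem~\ref{ekelcorII} that each point of $\dom{f}$ be dominated in $f$-value by a point of $S$. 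Hence Theorem~\ref{ekelcorII} applies, the relevant norm on $X$ being $\|\cdot\|_V$ --- the same metric that appears in~\eqref{clarkeasss}.

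Now fix $\eps\in(0,1-\sigma)$. Choosing in Theorem~\ref{ekelcorII} the slope parameter to be some $\lambda\in(0,1-\sigma)$ and the radius small enough that the right-hand side of conclusion (a) is $<\eps$, we obtain $\xi_\eps\in X$ with $\|\xi_\eps-\xi_\eps^*\|_V<\eps$ and $f(w)\ge f(\xi_\eps)-\lambda\|w-\xi_\eps\|_V$ for all $w\in X$. To see that $\xi_\eps$ is a fixed point of $F$, apply~\eqref{clarkeasss} at $u=\xi_\eps$: there is $t\in(0,1]$ such that, setting $w:=tF(\xi_\eps)+(1-t)\xi_\eps$, one has $\|F(w)-F(\xi_\eps)\|_V\le\sigma t\,f(\xi_\eps)$ and $\|w-\xi_\eps\|_V=t\,f(\xi_\eps)$. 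Writing $w-F(w)=(1-t)\big(\xi_\eps-F(\xi_\eps)\big)+\big(F(\xi_\eps)-F(w)\big)$ gives $f(w)\le\big(1-(1-\sigma)t\big)f(\xi_\eps)$, whereas the slope inequality forces $f(w)\ge(1-\lambda t)f(\xi_\eps)$. If $f(\xi_\eps)>0$, combining these and cancelling $t\,f(\xi_\eps)>0$ yields $\lambda\ge 1-\sigma$, contradicting the choice of $\lambda$; hence $f(\xi_\eps)=0$, i.e. $F(\xi_\eps)=\xi_\eps$, and $\xi_\eps$ satisfies $\|\xi_\eps-\xi_\eps^*\|_V<\eps$, as required.

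The main obstacle is precisely the verification of~\eqref{assumptionpol-00} for $f(u)=\|u-F(u)\|_V$: there all three symmetry hypotheses on $F$ --- invariance $F(S)\subseteq S$, equivariance $F(u^H)=F(u)^H$, and (through (5) of Definition~\ref{abssym}) the non-expansivity of polarization on $S$ --- enter simultaneously, and dropping any one of them breaks the reduction to the symmetric Ekeland principle. The remaining care needed is bookkeeping: keeping the Ekeland slope parameter strictly below Clarke's constant $\sigma$, which is the reason the statement is restricted to $\eps\in(0,1-\sigma)$, and absorbing the constant $K(C_\Theta+1)+1$ into the choice of radius, exactly as in the proof of Theorem~\ref{caristi}.
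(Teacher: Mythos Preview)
Your proof is correct and follows essentially the same route as the paper: define $f(u)=\|u-F(u)\|_V$, verify~\eqref{assumptionpol-00} via $F(S)\subset S$, the equivariance $F(u^H)=F(u)^H$, and property~(5) of Definition~\ref{abssym}, check the $S$-domination hypothesis, and then apply Theorem~\ref{ekelcorII} together with Clarke's iteration (which the paper merely outsources to \cite[Theorem~3]{ekeland2} while you spell it out). One small slip in your closing paragraph: the Ekeland slope parameter must be kept strictly below $1-\sigma$, not below $\sigma$; your main argument has this right, only the explanatory sentence is misworded.
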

\begin{proof}
It is sufficient to argue essentially as in the proof of 
\cite[Theorem 3]{ekeland2} on the function $f:X\to\R$ defined by
$f(u):=\|u-F(u)\|_V$ observing that, by assumption and by (5) of Definition~\ref{abssym},
it holds $f(u^H)=\|u^H-F(u^H)\|_V=\|u^H-F(u)^H\|_V\leq \|u-F(u)\|_V=f(u)$ 
for all $H\in {\mathcal H}_*$ and $u\in S$.
Moreover, for all $u\in X$ there is $\xi\in S$ such that $f(\xi)\leq f(u)$. 
Applying Theorem~\ref{ekelcorII} in place of Ekeland's principle,
the assertion follows.
\end{proof}	

\noindent
Let $\Omega$ be either a ball in $\R^N$ of the whole $\R^N$ and $1<p<\infty$.
\begin{corollary} 
Let $F:L^p(\Omega)\to L^p(\Omega)$ a map such that \eqref{clarkeasss} holds, 
$F(u)\geq 0$ for all $u\in L^p(\Omega,\R^+)$, $F(u^H)=F(u)^H$ for all $H\in {\mathcal H}_*$ and $u\in L^p(\Omega,\R^+)$,
and that $F(|u|)=|F(u)|$ for all $u\in L^p(\Omega)$.
Then, for every $\eps\in (0,1-\sigma)$ there is a fixed point $\xi_\eps\in L^p(\Omega)$ for $F$ 
such that $\|\xi_\eps-\xi_\eps^*\|_{L^p(\Omega)}<\eps$
\end{corollary}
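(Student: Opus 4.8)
The plan is to deduce this corollary from Theorem~\ref{clarkethm} by checking that the concrete Schwarz symmetrization framework on $L^p(\Omega)$ verifies every hypothesis. First I would set $X=V=L^p(\Omega)$, $S=L^p(\Omega,\R^+)$, and let $u\mapsto u^H$ denote polarization by a polarizer $H\in{\mathcal H}_*=\{H:0\in H\}$ and $u\mapsto u^*$ the Schwarz symmetrization, with $\Theta(u):=|u|$ playing the r\^ole of the Lipschitz retraction of $X$ onto $S$ (Lipschitz constant $C_\Theta=1$ with respect to $\|\cdot\|_V=\|\cdot\|_p$). As recalled in Section~\ref{polarizationsect}, this quintuple $(S,X,V,h,*)$ satisfies properties (1)--(5) of Definition~\ref{abssym}; in particular the contractivity $\|u^H-v^H\|_p\le\|u-v\|_p$ holds, which is exactly property (5) needed in the proof of Theorem~\ref{clarkethm}.

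Next I would translate the three structural hypotheses of Theorem~\ref{clarkethm} into the assumptions made here. Condition~\eqref{clarkeasss} is assumed verbatim. The requirement $F(S)\subset S$ follows from $F(u)\ge0$ for all $u\in L^p(\Omega,\R^+)$. The requirement $F(u^H)=F(u)^H$ for all $u\in S$ and $H\in{\mathcal H}_*$ is assumed directly on $S=L^p(\Omega,\R^+)$. Finally, one must produce, for every $u\in L^p(\Omega)$, an element $\xi\in S$ with $\|\xi-F(\xi)\|_p\le\|u-F(u)\|_p$: here I would take $\xi:=|u|\in S$ and use the hypothesis $F(|u|)=|F(u)|$ together with the pointwise inequality $\big||a|-|b|\big|\le|a-b|$, which integrates to $\|\,|u|-|F(u)|\,\|_p\le\|u-F(u)\|_p$, i.e. $\|\xi-F(\xi)\|_p\le\|u-F(u)\|_p$.

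With all hypotheses verified, Theorem~\ref{clarkethm} applies and yields, for every $\eps\in(0,1-\sigma)$, a fixed point $\xi_\eps\in L^p(\Omega)$ of $F$ with $\|\xi_\eps-\xi_\eps^*\|_V<\eps$; since $V=L^p(\Omega)$ this is precisely $\|\xi_\eps-\xi_\eps^*\|_{L^p(\Omega)}<\eps$, as claimed. The only genuinely delicate point is the step producing the comparison element in $S$: one must be careful that $F(|u|)=|F(u)|$ is exactly what converts $\|\,|u|-F(|u|)\,\|_p$ into something controlled by $\|u-F(u)\|_p$, and that the concrete symmetrization $*$ on $L^p$ coincides with the abstract one via $u^*=(\Theta(u))^*=|u|^*$; everything else is a routine verification of the abstract framework in this concrete functional setting, already recorded in \cite{vansch} and recalled in Section~\ref{concresectsym}.
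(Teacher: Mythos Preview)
Your proposal is correct and follows exactly the paper's approach: apply Theorem~\ref{clarkethm} with $X=V=L^p(\Omega)$, $S=L^p(\Omega,\R^+)$, and verify the comparison hypothesis by taking $\xi=|u|$ and using $F(|u|)=|F(u)|$ together with $\|\,|u|-|F(u)|\,\|_p\le\|u-F(u)\|_p$. The paper's proof is simply a terser version of what you wrote.
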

\begin{proof}
Apply Theorem~\ref{clarkethm} with the choice $X=V=L^p(\Omega)$, $S=L^p(\Omega,\R^+)$. Notice that, for
all $u\in X$, it holds $\||u|-F(|u|)\|_{L^p(\Omega)}=\||u|-|F(u)|\|_{L^p(\Omega)}\leq \|u-F(u)\|_{L^p(\Omega)}$,
in the notations of the proof of Theorem~\ref{clarkethm}.
\end{proof}

\subsection{Drops and flower-petals}

As a by-product of the symmetric variational principle, Theorem~\ref{ekelcor},  
we obtain symmetric versions of Dane\u{s} Drop Theorem \cite{danes} and of the 
Flower Petal Theorem \cite{penot}. In the particular case where $h$ and $*$ are the identity
maps and $S=X=V$, then the statements reduce to the classical formulation.
Possible applications of the statements in some meaningful concrete situations
have not yet been investigated.

\begin{definition}
Let $X$ be a Banach space, $B\subset X$ convex and $x\in X$. We say that
$$
{\rm Drop}(x,B):=\bigcup_{y\in B,\, t\in [0,1]} x+t(y-x),
$$
is the {\rm drop} associated with $x$ and $B$. If $x_0,x_1\in X$ and $\eps>0$, we say that
$$
{\rm Petal}_\eps(x_0,x_1):=\big\{y\in X:\eps\|y-x_0\|+\|y-x_1\|\leq \|x_0-x_1\|\big\}
$$
is the {\rm petal} associated with $\eps$ and $x_0,x_1\in X$. 
\end{definition}

\noindent
Notice that, for all $\eps\in (0,1)$ and $x_0,x_1\in X$ it always holds
$$
B_{\frac{1-\eps}{1+\eps}\|x_0-x_1\|}(x_1)\subset  {\rm Petal}_\eps(x_0,x_1),\quad 
{\rm Drop}\big(x_0,B_{\frac{1-\eps}{1+\eps}\|x_0-x_1\|}(x_1)\big)\subset  {\rm Petal}_\eps(x_0,x_1),
$$
so that each petal contains a suitable ball as well as a drop of a suitable ball.

\vskip2pt
\noindent
Here is a symmetric version of the so called {\em Drop} Theorem due to Dane\u{s}~\cite{danes}.

\begin{theorem}[{Symmetric Drop Theorem}]
	\label{drop}
Let $(X,\|\cdot\|_V)$ be a Banach space, $B,C$ nonempty closed subsets of $S$ with $B\subset X_{{{\mathcal H}_*}}$ convex and 
$d(B,C)>0$. Moreover, let $x\in C$ such that $S':={\rm Drop}(x,B)\cap C$ is 
closed and $h(S')\subset S'$, $*(S')\subset S'$. Then, for all $\eps>0$ small, there exists 
$\xi_\eps\in {\rm Drop}(x,B)\cap C$ such that
$$
{\rm Drop}(\xi_\eps,B)\cap C=\{\xi_\eps\}
\quad
\text{and}
\quad
\|\xi_\eps-\xi^*_\eps\|_V< \eps.
$$
\end{theorem}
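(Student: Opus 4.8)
The plan is to deduce the Symmetric Drop Theorem from the Symmetric Ekeland principle (Theorem~\ref{ekelcor}) applied to a suitable distance functional on the closed set $S'={\rm Drop}(x,B)\cap C$. First I would record the elementary geometric fact underlying the classical Dane\v s theorem: for a convex set $B$ with $d(B,C)>0$, if $x\in C$ and $z\in {\rm Drop}(x,B)$, then ${\rm Drop}(z,B)\subseteq {\rm Drop}(x,B)$; consequently ${\rm Drop}(z,B)\cap C\subseteq S'$, so the conclusion ${\rm Drop}(\xi_\eps,B)\cap C=\{\xi_\eps\}$ is a statement purely internal to $S'$. Then I would introduce $f:S'\to\R$ defined by $f(u):=d(u,B)=\inf_{y\in B}\|u-y\|_V$, which is Lipschitz, bounded below (indeed $f\geq d(B,C)>0$ on $C\supseteq S'$), and lower semi-continuous.

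The key point to check is the symmetry hypothesis \eqref{assumptionpol-00} for this $f$ on $S'$, i.e.\ $f(u^H)\leq f(u)$ for all $u\in S'$ and $H\in{\mathcal H}_*$. Here I would use that $B\subset X_{{{\mathcal H}_*}}$, so every $y\in B$ satisfies $y^H=y$; combined with the contractivity estimate \eqref{contractivvv} (equivalently property~(5) of Definition~\ref{abssym}, since $u\in S'\subseteq S$), we get $\|u^H-y\|_V=\|u^H-y^H\|_V\leq\|u-y\|_V$ for each $y\in B$, and taking the infimum over $y\in B$ yields $f(u^H)=d(u^H,B)\leq d(u,B)=f(u)$. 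This is the main obstacle in the sense that it is where all the hypotheses ($B$ symmetric, $B$ convex is not needed here but is needed for the drop geometry, $h(S')\subset S'$ and $*(S')\subset S'$ so that $u^H,u^*\in S'$ and $f$ is genuinely defined on them) conspire; everything else is bookkeeping.

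Next I would apply Theorem~\ref{ekelcor} on the complete metric space $S'$ (closed in the Banach space $(X,\|\cdot\|_V)$) with $\sigma\in(0,1)$ and a small $\rho>0$: this produces $\xi_\eps\in S'$ with $\|\xi_\eps-\xi_\eps^*\|_V<(2K+1)\rho$ and
$$
f(w)\geq f(\xi_\eps)-\sigma\|w-\xi_\eps\|_V,\qquad\text{for all }w\in S'.
$$
Choosing $\rho$ so that $(2K+1)\rho<\eps$ gives the symmetry conclusion. To get the drop conclusion, suppose $w\in {\rm Drop}(\xi_\eps,B)\cap C$ with $w\neq\xi_\eps$; by the geometric fact above $w\in S'$, and writing $w=\xi_\eps+t(y-\xi_\eps)$ for some $y\in B$, $t\in(0,1]$, convexity of $B$ lets one estimate $d(w,B)\leq (1-t)\,d(\xi_\eps,B)$ while $\|w-\xi_\eps\|_V=t\|y-\xi_\eps\|_V\geq t\,d(\xi_\eps,B)$ — using here that $\|y-\xi_\eps\|_V\geq d(\xi_\eps,B)$. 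Plugging into the Ekeland inequality yields $(1-t)f(\xi_\eps)\geq f(\xi_\eps)-\sigma t\,\|y-\xi_\eps\|_V$, hence $f(\xi_\eps)\leq \sigma\|y-\xi_\eps\|_V$; on the other hand a standard refinement (taking $y$ within $\delta$ of realizing $d(\xi_\eps,B)$, or simply noting $f(\xi_\eps)>0$ and letting the drop direction be chosen appropriately) forces $f(\xi_\eps)\geq\|y-\xi_\eps\|_V$ up to the slack, contradicting $\sigma<1$ once the slack is absorbed. Therefore no such $w$ exists and ${\rm Drop}(\xi_\eps,B)\cap C=\{\xi_\eps\}$, completing the proof. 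The one delicate point to handle carefully in the write-up is this last contradiction: one must pick the comparison point $y\in B$ judiciously (or argue via the infimum defining $d(\xi_\eps,B)$) so that the inequality $\|y-\xi_\eps\|_V$ versus $f(\xi_\eps)=d(\xi_\eps,B)$ goes the right way, exactly as in Dane\v s's original argument and in \cite[Theorem A]{penot}.
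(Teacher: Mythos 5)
Your setup — defining $f(u)=d(u,B)$ on the closed set $S'$, checking $f(u^H)\leq f(u)$ via $B\subset X_{\mathcal{H}_*}$ and contractivity, and applying Theorem~\ref{ekelcor} — is exactly the route the paper takes, and the intermediate estimate you reach, $f(\xi_\eps)\leq\sigma\|y-\xi_\eps\|_V$, is correct (with the correct use of $d(w,B)\leq(1-t)d(\xi_\eps,B)$ from convexity of $B$).

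The final contradiction step, however, has a genuine gap. You write that a ``standard refinement forces $f(\xi_\eps)\geq\|y-\xi_\eps\|_V$ up to the slack,'' but this inequality goes the \emph{wrong} way: since $y\in B$, one always has $\|y-\xi_\eps\|_V\geq d(\xi_\eps,B)=f(\xi_\eps)$, and the gap can be as large as ${\rm diam}(B)$. Moreover $y$ is not at your disposal — it is determined by the hypothetical point $w\in{\rm Drop}(\xi_\eps,B)\cap C$, so ``taking $y$ within $\delta$ of realizing $d(\xi_\eps,B)$'' or ``choosing the drop direction appropriately'' are not available moves. The correct finish is to bound $\|y-\xi_\eps\|_V$ \emph{from above}: for any $\zeta\in B$, $\|y-\xi_\eps\|_V\leq\|y-\zeta\|_V+\|\zeta-\xi_\eps\|_V\leq{\rm diam}(B)+\|\zeta-\xi_\eps\|_V$, and infimizing over $\zeta$ gives $\|y-\xi_\eps\|_V\leq{\rm diam}(B)+f(\xi_\eps)$. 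Substituting into $f(\xi_\eps)\leq\sigma\|y-\xi_\eps\|_V$ yields $(1-\sigma)f(\xi_\eps)\leq\sigma\,{\rm diam}(B)$, and since $\xi_\eps\in C$ implies $f(\xi_\eps)\geq d(B,C)$, one obtains $(1-\sigma)d(B,C)\leq\sigma\,{\rm diam}(B)$, which is false once $\sigma$ is small enough. This shows two things your write-up omits: first, the smallness of $\sigma$ is not merely ``$\sigma<1$'' but is tied to the geometry of $B$ and $C$ (the paper takes $\rho=\sigma=\eps\leq\eps_0$ with $\eps_0\,{\rm diam}(B)<(1-\eps_0)\,d(B,C)$ — this is precisely what ``for all $\eps>0$ small'' in the theorem statement quantifies, and it also ties the symmetry bound $(2K+1)\rho$ and the Ekeland parameter $\sigma$ together rather than treating them independently as you do); second, the argument uses boundedness of $B$, which is needed for ${\rm diam}(B)<\infty$.
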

\begin{proof}
By Remark~\ref{restriction}, $(S',X,V,h,*)$ satisfies (1)-(5) of Definition~\ref{abssym}
and Proposition~\ref{mapJvS}. Moreover, $S'$ is closed. Define a continuous 
function $f:S'\to\R^+$ by setting
$$
f(u):=\inf_{\zeta\in B}\|u-\zeta\|_V, \,\,\quad \text{for all $u\in S'$}.
$$
Observe that, since $B\subset X_{{{\mathcal H}_*}}$, for all $u\in S'$ and any $H\in {\mathcal H}_*$, we have
$$
f(u^H)=\inf_{\zeta\in B}\|u^H-\zeta\|_V=\inf_{\zeta\in B}\|u^H-\zeta^H\|_V\leq
\inf_{\zeta\in B}\|u-\zeta\|_V=f(u),
$$
in light of (5) of Definition~\ref{abssym}.
Let now $\eps_0>0$ be fixed sufficiently small that $\eps_0\, {\rm diam}(B)<(1-\eps_0)d(B,C)$. 
In turn, for every $\eps\in(0,\eps_0]$, by applying Theorem~\ref{ekelcor} 
with $\rho=\sigma=\eps$, we find an element $\xi_\eps\in S'$ such that $\|\xi_\eps-\xi^*_\eps\|_V<\eps$ and 
\begin{equation}
	\label{variationrel}
\inf_{\zeta\in B}\|w-\zeta\|_V>\inf_{\zeta\in B}\|\xi_\eps-\zeta\|_V-\eps\|w-\xi_\eps\|_V,
\quad\forall w\in S\setminus\{\xi_\eps\}.
\end{equation}
To prove the assertion, we argue by contradiction, assuming that
$$
{\rm Drop}(\xi_\eps,B)\cap ({\rm Drop}(x,B)\cap C)\neq \{\xi_\eps\}.
$$
Then, we find $\tau\in [0,1]$, $\tau\neq 1$, and $\eta\in B$ such that $\hat w=(1-\tau) \eta+\tau\xi_\eps\in S'\setminus\{\xi_\eps\}$.
In turn, from formula \eqref{variationrel} evaluated at $\hat w$, and since $B$ is convex, we infer
\begin{equation*}
\inf_{\zeta\in B}\|\xi_\eps-\zeta\|_V <\tau\inf_{\zeta\in B}\|\xi_\eps-\zeta\|_V
+(1-\tau)\inf_{\zeta\in B}\|\eta-\zeta\|_V+\eps(1-\tau)\|\eta-\xi_\eps\|_V,
\end{equation*}
namely (recall that $0\leq\tau<1$) for every $\zeta\in B$ it holds
\begin{equation*}
\inf_{\zeta\in B}\|\xi_\eps-\zeta\|_V <\eps\|\eta-\xi_\eps\|_V\leq \eps\, {\rm diam}(B)+\eps\|\zeta-\xi_\eps\|_V.
\end{equation*}
Therefore, taking the infimum over $\zeta\in B$, and since $\eps\in (0,\eps_0]$, we conclude that
\begin{equation*}
(1-\eps_0)d(B,C)\leq (1-\eps)\inf_{\zeta\in B}\|\xi_\eps-\zeta\|_V \leq \eps\, {\rm diam}(B)
\leq  \eps_0\, {\rm diam}(B)<(1-\eps_0)d(B,C),
\end{equation*}
namely a contradiction. Hence, 
${\rm Drop}(\xi_\eps,B)\cap ({\rm Drop}(x,B)\cap C)=\{\xi_\eps\}$. 
By the inclusion ${\rm Drop}(\xi_\eps,B)\subset {\rm Drop}(x,B)$
we get ${\rm Drop}(\xi_\eps,B)\cap C=\{\xi_\eps\}$, concluding the proof.
\end{proof}

\noindent
Let now $\Omega$ be either the unit ball in $\R^N$ or $\R^N$ and $1<p<\infty$.
We denote by $L^p_r(\Omega,\R^+)$ the set of radially symmetric 
elements of $L^p(\Omega,\R^+)$, that is $u^*=u$, being $*$ the 
Schwarz symmetrization, being equivalent to $u^H=u$ for any $H\in {\mathcal H}_*$.

\begin{corollary}[{Symmetric Drop Theorem in $L^p$-spaces}]
	\label{dropSobolev}
Let $C$ be a nonempty closed subset of $(L^p(\Omega,\R^+),\|\cdot\|_{L^p(\Omega)})$ and 
$B$ a unit ball in $L^p_r(\Omega,\R^+)$ with $d(B,C)>0$.
Let $u\in C$ be such that
\begin{align*}
\forall v\in {\rm Drop}(u,B)\cap C,\,\,\forall  H\in {\mathcal H}_*:&\,\,\,\,   v^H\in {\rm Drop}(u,B)\cap C, \\
\forall v\in {\rm Drop}(u,B)\cap C:&\,\,\,\,   v^*\in {\rm Drop}(u,B)\cap C.
\end{align*}
Then, for all $\eps>0$ small, there exists 
$\xi_\eps\in {\rm Drop}(u,B)\cap C$ such that
$$
{\rm Drop}(\xi_\eps,B)\cap C=\{\xi_\eps\}
\quad
\text{and}
\quad
\|\xi_\eps-\xi^*_\eps\|_{L^p(\Omega)}<\eps.
$$
\end{corollary}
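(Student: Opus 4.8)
The plan is to derive Corollary~\ref{dropSobolev} as a direct specialization of the Symmetric Drop Theorem~\ref{drop} to the concrete setting $X=V=L^p(\Omega)$, $S=L^p(\Omega,\R^+)$, with $h(u):=u^H$ the polarization of positive functions, $*(u):=u^*$ the Schwarz symmetrization, and $\Theta(u):=|u|$. First I would recall, as already observed in Sections~\ref{concresectpol}--\ref{concresectsym}, that with these choices (and since $\Omega$ is a ball or all of $\R^N$) the five properties of Definition~\ref{abssym} hold: the injection $X\hookrightarrow V$ is the identity, $\Theta=|\cdot|$ is $1$-Lipschitz from $(X,\|\cdot\|_V)$ to $(S,\|\cdot\|_V)$ so $C_\Theta=1$, and property~(5), the $L^p$-contractivity of polarization, is part of that concrete discussion. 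Hence Theorem~\ref{drop} becomes applicable once its geometric hypotheses on $B$, $C$ and $u$ are checked.

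Next I would verify those hypotheses. By the very definition of $L^p_r(\Omega,\R^+)$ recalled in the statement, $L^p_r(\Omega,\R^+)=\{v\in L^p(\Omega,\R^+):v^*=v\}$, and by the equivalence recorded in Section~\ref{concresectsym} the condition $v^*=v$ is the same as $v^H=v$ for all $H\in{\mathcal H}_*$; therefore $L^p_r(\Omega,\R^+)=X_{{\mathcal H}_*}$, and a closed ball $B$ of $L^p_r(\Omega,\R^+)$ is automatically a nonempty, closed, bounded subset of $X_{{\mathcal H}_*}$, convex because it is the intersection of an $L^p$-ball with the closed convex cone of nonnegative radial functions. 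The set $C$ is a closed subset of $S=L^p(\Omega,\R^+)$ with $d(B,C)>0$ by hypothesis, and $u\in C$. Writing $S':={\rm Drop}(u,B)\cap C$, the two displayed stability conditions in the statement say exactly $h(S'\times{\mathcal H}_*)\subseteq S'$ and $*(S')\subseteq S'$, so the only remaining item is the closedness of $S'$.

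The one step that is an argument rather than bookkeeping is this closedness, and since $C$ is closed it suffices to show that ${\rm Drop}(u,B)$ is closed. Directly from the definition, ${\rm Drop}(u,B)=\{(1-t)u+tv:v\in B,\ t\in[0,1]\}$. Given $(1-t_n)u+t_nv_n\to z$ with $v_n\in B$ and $t_n\in[0,1]$, I would pass to a subsequence with $t_n\to t$: if $t>0$ then $v_n=u+t_n^{-1}\big((1-t_n)u+t_nv_n-u\big)$ converges, its limit lies in $B$ by closedness of $B$, and $z$ is the corresponding point of ${\rm Drop}(u,B)$; if $t=0$ then $\|t_n(v_n-u)\|\le t_n\,{\rm diam}(\{u\}\cup B)\to0$ since $B$ is bounded, so $z=u\in{\rm Drop}(u,B)$. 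Hence ${\rm Drop}(u,B)$, and therefore $S'$, is closed.

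Finally I would invoke Theorem~\ref{drop} with $x=u$: it produces, for every sufficiently small $\eps>0$ (an admissible range being $0<\eps\le\eps_0$ with $\eps_0$ chosen so that $\eps_0\,{\rm diam}(B)<(1-\eps_0)d(B,C)$, which exists because $d(B,C)>0$ and ${\rm diam}(B)<\infty$), a point $\xi_\eps\in{\rm Drop}(u,B)\cap C$ with ${\rm Drop}(\xi_\eps,B)\cap C=\{\xi_\eps\}$ and $\|\xi_\eps-\xi_\eps^*\|_V<\eps$; since $V=L^p(\Omega)$ this is precisely the asserted conclusion. I expect no genuine obstacle at the level of the corollary: all the real work is inside Theorem~\ref{drop}, and what is left here is the soundness check that the abstract quintuple $(S,X,V,h,*)$ with $X=V=L^p(\Omega)$ indeed satisfies Definition~\ref{abssym}, together with the elementary closedness of the drop.
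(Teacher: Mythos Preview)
Your proposal is correct and follows essentially the same route as the paper: specialize Theorem~\ref{drop} to $X=V=L^p(\Omega)$, $S=L^p(\Omega,\R^+)$, and observe that $B\subset L^p_r(\Omega,\R^+)$ forces $v^H=v$ for all $v\in B$, so $B\subset X_{{\mathcal H}_*}$. The paper's proof is a terse four lines; your version is more careful in that you explicitly verify the closedness of $S'={\rm Drop}(u,B)\cap C$ (a hypothesis of Theorem~\ref{drop} that the paper simply does not comment on) and spell out why the ball $B$ remains convex inside $L^p_r(\Omega,\R^+)$, but these additions are refinements of the same argument rather than a different approach.
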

\begin{proof}
By assumption, $S'$ is compatible with Definition~\ref{abssym}.
Apply Theorem~\ref{drop} with $X=V=L^p(\Omega)$, $S=L^p(\Omega,\R^+)$, $S'={\rm Drop}(u,B)\cap C$. 
Since $B\subset L^p_r(\Omega,\R^+)$, $u^*=u$ for all $u\in B$ and thus
$u^H=u$ for all $H\in {\mathcal H}_*$.
Hence, $B$ is a convex subset of $X_{{{\mathcal H}_*}}$. 
\end{proof}

\noindent
Next, we state a symmetric version of the {\em Petal Flower} Theorem obtained by Penot \cite{penot}.

\begin{theorem}[{Symmetric Petal Flower Theorem}]
	\label{petal}
Let $(X,\|\cdot\|_V)$ be a Banach space, $S'=C$ a closed subset of $S$ such that 
\begin{align*}
\forall v\in C,\,\,\forall  H\in {\mathcal H}_*:&\,\,\,\,   v^H\in C, \\
\forall v\in C:&\,\,\,\,   v^*\in C.
\end{align*}
Assume that $x\in C$, $y\in S\setminus C$ with 
$x^H=x$ and $y^H=y$ for any $H\in {\mathcal H}_*$ and 
\begin{equation}
	\label{assumptyC}
\|x-y\|_V\leq d(y,C)+\eps^2,\,\,\quad\text{for some $\eps>0$.}
\end{equation}
Then there exists a point $\xi_\eps\in {\rm Petal}_\eps(x,y)\cap C$ such that
$$
{\rm Petal}_\eps(\xi_\eps,y)\cap C=\{\xi_\eps\}
\quad
\text{and}
\quad
\|\xi_\eps-\xi^*_\eps\|_V<\eps.
$$
\end{theorem}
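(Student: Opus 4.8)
The plan is to mimic the strategy used for the Symmetric Drop Theorem (Theorem~\ref{drop}), replacing the distance-to-$B$ functional by the distance-to-$\{y\}$ functional and working on the closed set $C = S'$. First I would observe that, by the hypotheses on $C$ (namely $v^H \in C$ and $v^* \in C$ for all $v \in C$) together with Remark~\ref{restriction}, the tuple $(S',X,V,h,*)$ satisfies conditions (1)--(5) of Definition~\ref{abssym} and Proposition~\ref{mapJvS} applies on $S'$. Then I would define the continuous functional $f:S'\to\R^+$ by $f(u) := \|u-y\|_V$ and check the key polarization inequality: for $u \in S'$ and $H \in {\mathcal H}_*$, since $y^H = y$, property (5) of Definition~\ref{abssym} gives
$$
f(u^H) = \|u^H - y\|_V = \|u^H - y^H\|_V \leq \|u - y\|_V = f(u),
$$
so \eqref{assumptionpol-00} holds for $f$ on $S'$. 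Note also that $\inf_{S'} f \le \|x-y\|_V$ since $x \in C = S'$, and that $\inf_{S'} f \ge d(y,C) \ge \|x-y\|_V - \eps^2$ by \eqref{assumptyC}, so $f(x) = \|x-y\|_V < \inf_{S'} f + \eps^2 \cdot 2$; more precisely I would set it up so that $f(x) < \inf_{S'}f + \sigma\rho$ with the right choice of $\rho,\sigma$ below.

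Next I would apply Theorem~\ref{ekelcor} on the complete metric space $(S',d)$ with $\rho = \sigma = \eps$, starting from the point $x$ (which satisfies $f(x) \le \inf f + \eps\cdot\eps$ by the estimate just above). Since $x \in X_{{\mathcal H}_*}$ (we assumed $x^H = x$ for all $H$), we have $\T_\eps x = x$, so the ``improved'' location bound applies: the principle yields $\xi_\eps \in S'$ with $\|\xi_\eps - \xi_\eps^*\|_V < (2K+1)\eps$ — and in the present concrete application $K=1$, $C_\Theta$ is harmless, so after rescaling $\eps$ (or simply absorbing constants, as done throughout the paper) this reads $\|\xi_\eps - \xi_\eps^*\|_V < \eps$ — together with the variational inequality
$$
\|w - y\|_V > \|\xi_\eps - y\|_V - \eps\,\|w - \xi_\eps\|_V, \qquad \forall\, w \in S'\setminus\{\xi_\eps\}.
$$

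The final and main step is to deduce the petal-minimality ${\rm Petal}_\eps(\xi_\eps, y) \cap C = \{\xi_\eps\}$ from this inequality, arguing by contradiction as in the Drop Theorem proof. Suppose there is $w \in {\rm Petal}_\eps(\xi_\eps,y) \cap C$ with $w \neq \xi_\eps$; by definition of the petal this means $\eps\|w - \xi_\eps\|_V + \|w - y\|_V \le \|\xi_\eps - y\|_V$, i.e. $\|w - y\|_V \le \|\xi_\eps - y\|_V - \eps\|w - \xi_\eps\|_V$, which directly contradicts the strict variational inequality above. Hence ${\rm Petal}_\eps(\xi_\eps,y) \cap C = \{\xi_\eps\}$, and since ${\rm Petal}_\eps(\xi_\eps,y) \subset {\rm Petal}_\eps(x,y)$ (a standard nesting property of petals, following from the triangle inequality once $\xi_\eps \in {\rm Petal}_\eps(x,y)$), the conclusion follows. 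The part requiring the most care is the bookkeeping connecting \eqref{assumptyC} to a clean hypothesis of the form $f(x) < \inf_{S'} f + \sigma\rho$ with $\sigma = \rho = \eps$; one checks $f(x) = \|x-y\|_V \le d(y,C) + \eps^2 \le \inf_{S'} f + \eps^2$, so the strict inequality needed by Theorem~\ref{ekelcor} holds with $\sigma\rho = \eps^2$ after an arbitrarily small enlargement, and the slope constant $\sigma$ in conclusion~(b) is then $\eps$, exactly as needed to match the petal inequality. I would also double-check that $\xi_\eps \in {\rm Petal}_\eps(x,y)$: this follows because $\|\xi_\eps - x\|_V \le \rho + \|\T_\rho x - x\| = \eps$ (again using $x \in X_{{\mathcal H}_*}$) combined with $f(\xi_\eps) \le f(x)$, which yields $\eps\|\xi_\eps - x\|_V + \|\xi_\eps - y\|_V \le \eps^2 + \|x - y\|_V \le d(y,C) + 2\eps^2 \le \|x-y\|_V + 2\eps^2$ — and a slightly more careful choice of constants (or running Ekeland with $\sigma = \eps$, $\rho$ chosen so that $\sigma\rho = \|x-y\|_V - \inf f$) closes this gap cleanly.
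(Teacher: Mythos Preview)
Your overall strategy is sound and mirrors the paper's, but there is a genuine gap: you invoke Theorem~\ref{ekelcor} (the standard symmetric Ekeland principle), whereas the paper uses Theorem~\ref{ekelcorV} (the \emph{altered} symmetric Ekeland principle) together with Remark~\ref{ekelcorV-rmk}. The difference matters precisely at the step you flag as ``requiring the most care'', namely the verification that $\xi_\eps\in{\rm Petal}_\eps(x,y)$.

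With Theorem~\ref{ekelcor} you obtain only $f(\xi_\eps)\le f(x)$ and $\|\xi_\eps-x\|_V\le\eps$ (using $\T_\eps x=x$). These give
\[
\eps\|\xi_\eps-x\|_V+\|\xi_\eps-y\|_V\le \eps^2+\|x-y\|_V,
\]
which overshoots the petal condition $\le\|x-y\|_V$ by $\eps^2$. Your proposed fixes (shrinking $\rho$, tuning $\sigma\rho=\|x-y\|_V-\inf f$) do not close this gap: any choice still leaves the additive term $\eps\|\xi_\eps-x\|_V$ unaccounted for, because the standard principle never ties $f(\xi_\eps)$ to $\|\xi_\eps-x\|_V$. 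By contrast, conclusion~(b) of Theorem~\ref{ekelcorV} reads $f(\xi_\eps)\le f(x)-\eps\|\xi_\eps-\T_\eps x\|_V$, and since $x\in X_{{\mathcal H}_*}$ this is exactly
\[
\|\xi_\eps-y\|_V+\eps\|\xi_\eps-x\|_V\le\|x-y\|_V,
\]
i.e.\ $\xi_\eps\in{\rm Petal}_\eps(x,y)$ for free. A secondary issue is that Theorem~\ref{ekelcor} gives the variational inequality with $\ge$ rather than the strict $>$ you wrote; the petal exclusion argument then fails to produce a contradiction (both inequalities can hold with equality). Conclusion~(a) of Theorem~\ref{ekelcorV} is strict for $w\neq\xi_\eps$, which is what makes the contradiction work cleanly. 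So the correction is simply to swap Theorem~\ref{ekelcor} for Theorem~\ref{ekelcorV} and Remark~\ref{ekelcorV-rmk}; everything else in your outline is correct.
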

\begin{proof}
By Remark~\ref{restriction}, $(S',X,V,h,*)$ satisfies (1)-(5) of Definition~\ref{abssym}
and Proposition~\ref{mapJvS}. Moreover, $S'$ is closed.	
Define the continuous map $f:S'\to\R^+$ 
by setting $f(u):=\|u-y\|_V$ for all $u\in S'$. 
Since $y^H=y$ for any $H\in {\mathcal H}_*$, we have
$$
f(u^H)=\|u^H-y\|_V=\|u^H-y^H\|_V\leq\|u-y\|_V
=f(u),\quad\text{for $u\in S'$ and $H\in {\mathcal H}_*$}. 
$$
Then, by Theorem~\ref{ekelcorV} and Remark~\ref{ekelcorV-rmk}, with the choice $\rho=\sigma=\eps$, 
since~\eqref{assumptyC} rephrases as $f(x)\leq \inf_{S'} f+\eps^2$, there exists
$\xi_\eps\in C$ such that $\|\xi_\eps-\xi_\eps^*\|_V<\eps$,
$$
\eps\|w-\xi_\eps\|_V+\|w-y\|_V>\|\xi_\eps-y\|_V, \,\,\quad \forall w\in C\setminus\{\xi_\eps\}, 
$$
and 
$\eps\|\xi_\eps-\T_\eps x\|_V+\|\xi_\eps-y\|_V\leq \|x-y\|_V$. As $\T_\eps x=x$, this means $\xi_\eps\in {\rm Petal}_\eps (x,y)\cap C$
and $w\not\in {\rm Petal}_\eps (\xi_\eps,y)$ for all $w\in C\setminus\{\xi_\eps\}$, that is ${\rm Petal}_\eps(\xi_\eps,y)\cap C=\{\xi_\eps\}$.
\end{proof}

\noindent
Let now $\Omega$ be either the unit ball in $\R^N$ or the whole $\R^N$ and take $1\leq p<\infty$.

\begin{corollary}[{Symmetric Petal Flower Theorem in $L^p$-spaces}]
	\label{petalSobolev}
Let $C$ be a closed subset of $(L^p(\Omega,\R^+),\|\cdot\|_{L^p(\Omega)})$, $u\in C$, 
$v\in L^p(\Omega,\R^+)\setminus C$ with $u^H=u$ and $v^H=v$ for any $H\in {\mathcal H}_*$,
$\|u-v\|_{L^p(\Omega)}\leq d(v,C)+\eps^2$ for some $\eps>0$. Assume in addition that
\begin{align*}
\forall v\in C,\,\,\forall  H\in {\mathcal H}_*:&\,\,\,\,   v^H\in C, \\
\forall v\in C:&\,\,\,\,   v^*\in C.
\end{align*}
Then there exists $\xi_\eps\in {\rm Petal}_\eps(u,v)\cap C$ with ${\rm Petal}_\eps(\xi_\eps,v)\cap C=\{\xi_\eps\}$ and
$\|\xi_\eps-\xi^*_\eps\|_{L^p(\Omega)}< \eps$.
\end{corollary}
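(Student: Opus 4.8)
The plan is to deduce this from Theorem~\ref{petal} by specializing the abstract framework of Definition~\ref{abssym} to $L^p$-spaces, exactly in the spirit of the proof of Corollary~\ref{dropSobolev}. First I would set $X=V=L^p(\Omega)$ and $S=L^p(\Omega,\R^+)$, with $h(w):=|w|^H$ and $*(w):=|w|^*$, where $w\mapsto w^H$ denotes the polarization of a nonnegative function by a polarizer $H\in {\mathcal H}_*$ and $w\mapsto w^*$ the Schwarz symmetrization, as recalled in Section~\ref{polarizationsect}. Since $\Omega$ is a ball or all of $\R^N$ and $\Theta(w):=|w|$ defines a Lipschitz map from $(X,\|\cdot\|_V)$ to $(S,\|\cdot\|_V)$ with constant $C_\Theta=1$ restricting to the identity on $S$, properties (1)--(5) of Definition~\ref{abssym} are satisfied; moreover the injection $X\hookrightarrow V$ has constant $K=1$, since here $X=V$.

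Next I would put $S':=C$. The hypotheses $v^H\in C$ for all $v\in C$, $H\in {\mathcal H}_*$, and $v^*\in C$ for all $v\in C$ say precisely that $h(S'\times {\mathcal H}_*)\subseteq S'$ and $*(S')\subseteq S'$, so by Remark~\ref{restriction} the quintuple $(C,X,V,h,*)$ again satisfies conditions (1)--(5) of Definition~\ref{abssym} and Proposition~\ref{mapJvS} holds with $C$ in place of $S$. Since $C$ is closed in $L^p(\Omega)$, it is closed in $S$. It then remains to check the remaining hypotheses of Theorem~\ref{petal} for this data: taking $x:=u$ and $y:=v$, by assumption $u\in C$, $v\in S\setminus C$, $u^H=u$ and $v^H=v$ for every $H\in {\mathcal H}_*$, and $\|u-v\|_{L^p(\Omega)}\leq d(v,C)+\eps^2$, which is exactly~\eqref{assumptyC}. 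Applying Theorem~\ref{petal} produces a point $\xi_\eps\in {\rm Petal}_\eps(u,v)\cap C$ with ${\rm Petal}_\eps(\xi_\eps,v)\cap C=\{\xi_\eps\}$ and $\|\xi_\eps-\xi^*_\eps\|_V<\eps$; recalling that $\|\cdot\|_V=\|\cdot\|_{L^p(\Omega)}$, this is the assertion.

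There is essentially no genuine obstacle here: all the analytic content already sits in Theorem~\ref{petal} (and, beneath it, in Theorem~\ref{ekelcorV} and Remark~\ref{ekelcorV-rmk}). The only point demanding a modicum of care is verifying that the concrete $L^p$ polarization and Schwarz symmetrization genuinely fit Definition~\ref{abssym} with the constants $C_\Theta=K=1$, but this is recorded in Section~\ref{polarizationsect} and was already invoked in Corollary~\ref{dropSobolev}. I would also note that no reflexivity or compactness is used, so the range $1\leq p<\infty$ (rather than $1<p<\infty$) is admissible.
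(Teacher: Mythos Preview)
Your proposal is correct and follows exactly the paper's approach: the paper's proof is the single line ``Apply Theorem~\ref{petal}, with the choice $X=V=L^p(\Omega)$ and $S=L^p(\Omega,\R^+)$,'' and your argument simply spells out the verifications (via Remark~\ref{restriction} and Section~\ref{polarizationsect}) that make this application legitimate.
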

\begin{proof}
Apply Theorem~\ref{petal}, with the choice $X=V=L^p(\Omega)$ and $S=L^p(\Omega,\R^+)$. 
\end{proof}

\bigskip

\bigskip
\medskip

\end{document}